\newcommand{\numberset}{\mathbb}
\newcommand{\N}{\numberset{N}}
\newcommand{\R}{\numberset{R}}
\newcommand{\Pk}{\numberset{P}}
\renewcommand{\epsilon}{\varepsilon}
\renewcommand{\theta}{\vartheta}
\renewcommand{\rho}{\varrho}
\renewcommand{\phi}{\varphi}
\newcommand{\pp}{\boldsymbol{p}}
\newcommand{\nn}{\boldsymbol{n}}
\newcommand{\bb}{\boldsymbol{\beta}}
\newcommand{\dd}{{\rm div}}
\newcommand{\gr}{\nabla}
\def\tri{{| \! | \! |}}
\def\PN{{\Pi^{\nabla, E}_{k}}}
\def\P0{{\Pi^{0, E}_{k}}}
\def\PZ0P{{\boldsymbol{\Pi}^{0, E}_{k-1}}}
\def\PP0P{{\boldsymbol{\Pi}^{0, E}_{k}}}
\def\Stab{\mathcal{S}^E}
\def\Lstab{\mathcal{L}^E}
\def\Bstab{\mathcal{B}^E}
\def\AsupgE{\mathcal{A}_{\rm supg}^E}
\def\FsupgE{\mathcal{F}_{\rm supg}^E}
\def\Asupg{\mathcal{A}_{\rm supg}}
\def\Fsupg{\mathcal{F}_{\rm supg}}
\def\Ltstab{\widetilde{\mathcal{L}}^E}
\def\Btstab{\widetilde{\mathcal{B}}^E}
\def\AtsupgE{\widetilde{\mathcal{A}}_{\rm supg}^E}
\def\FtsupgE{\widetilde{\mathcal{F}}_{\rm supg}^E}
\def\Atsupg{\widetilde{\mathcal{A}}_{\rm supg}}
\def\Ftsupg{\widetilde{\mathcal{F}}_{\rm supg}}
\def\vint{v_{\mathcal{I}}}
\def\uint{u_{\mathcal{I}}}
\def\eint{e_{\mathcal{I}}}
\def\epi{e_{\pi}}
\def\bskew{{b^{\rm skew}}}
\def\bskewE{{b^{{\rm skew},E}}}
\def\bskewEh{{b^{{\rm skew},E}_h}}
\def\bfp{{b^E_{o, h}}}
\def\bfb{{b^E_{\partial, h}}}
\def\bbO{{\|\bb\|_{[L^{\infty}(\Omega)]^2}}}
\def\bbE{{\|\bb\|_{[L^{\infty}(E)]^2}}}
\def\cinv{{\gamma_E}}
\def\supg{{\rm supg}}
\def\errF{\eta_{\mathcal{F}}^E}
\def\errFA{\eta_{\mathcal{F},1}^E}
\def\errFB{\eta_{\mathcal{F},2}^E}
\def\erra{\eta_{a}^E}
\def\errb{\eta_{b}^E}
\def\errbp{\eta_{b, o}^{E}}
\def\errbb{\eta_{b, \partial}^{E}}
\def\errbA{\eta_{b,A}^E}
\def\errbB{\eta_{b,B}^E}
\def\ebA{\eta_{b,1}^E}
\def\ebB{\eta_{b,2}^E}
\def\ebC{\eta_{b,3}^E}
\def\ebD{\eta_{b,4}^E}
\def\ebE{\eta_{b,5}^E}
\def\ebF{\eta_{b,6}^E}
\def\ebI{\eta_{b,i}^E}
\def\errB{\eta_{\mathcal{B}}^E}
\def\errBA{\eta_{{\mathcal{B}},1}^E}
\def\errBB{\eta_{{\mathcal{B}},2}^E}
\def\errBC{\eta_{{\mathcal{B}},3}^E}
\def\errL{\eta_{\mathcal{L}}^E}
\def\errLA{\eta_{{\mathcal{L}},1}^E}
\def\errLB{\eta_{{\mathcal{L}},2}^E}
\def\reg{s}
\lbrace\begin{array}{@{}l@{}}}%
\theoremstyle{definition}
\theoremstyle{remark}
\newtheorem{remark}{Remark}[section]
\theoremstyle{remark}
\theoremstyle{plain}
\newtheorem{proposition}{Proposition}[section]
\newtheorem{lemma}{Lemma}[section]
\author[1]{L. Beir\~ao da Veiga \thanks{lourenco.beirao@unimib.it}}
\author[1]{F. Dassi \thanks{franco.dassi@unimib.it}}
\author[2]{C. Lovadina \thanks{carlo.lovadina@unimi.it}}
\author[1]{G. Vacca \thanks{giuseppe.vacca@unimib.it}}
\affil[1]{Dipartimento di Matematica e Applicazioni,
Universit\`a degli Studi di Milano Bicocca,
Via Roberto Cozzi 55 - 20125 Milano, Italy}
\affil[2]{Dipartimento di Matematica ``F. Enriques'',
Universit\`a degli Studi di Milano,
Via via Cesare Saldini 50 - 20133 Milano, Italy}
\title{\textbf{SUPG-stabilized Virtual Elements for diffusion-convection problems: a robustness analysis}}
\date{\today}
\begin{document}

\maketitle

\begin{abstract}
The objective of this contribution is to develop a convergence analysis for SUPG-stabilized Virtual Element Methods in diffusion-convection problems that is robust also in the convection dominated regime. For the original method introduced in [Benedetto et al, CMAME 2016] we are able to show an ``almost uniform'' error bound (in the sense that the unique term that depends in an unfavorable way on the parameters is damped by a higher order mesh-size multiplicative factor). We also introduce a novel discretization of the convection term that allows us to develop error estimates that are fully robust in the convection dominated cases. We finally present some numerical result.
\end{abstract}

\section{Introduction}
\label{sec:intro}

The Virtual Element Method (VEM) was introduced in \cite{volley,autostoppisti} as a generalization of the Finite Element Method (FEM) to general polygonal and polyhedral meshes. Since its introduction, the VEM enjoyed a wide success in the Numerical Analysis and Engineering communities, both due to the encouraging results and the natural construction.

The possibility of using general polytopal meshes makes VEM suitable for diffusion problems, for instance by making much easier to adapt to complex geometry of the data (such as in basin and reservoir simulations) and to irregularities of the solution. The VEM literature on the diffusion-reaction-convection problem is indeed very wide, covering primal and mixed methods, conforming and non-conforming schemes, ranging from foundation/theoretical contributions to more applicative articles; a very short representative list being 
\cite{
BBMR-2016,
BBMR-misti-2016,
BFM-2014,
AML-2016,
CMS-2018,
BDR:2017,
BBS-2016,
BBBPS-2016,
fumagalli:2018,
fumagalli:2019,
BPV:2020, 
coulet:2020}. 
Some examples of other numerical methods for the
diffusion-reaction-convection problem that can handle polytopal meshes are 
\cite{dPDE:2015,
dPE:2015,
AGH:2013,
CGH:2014,
review:2016,
CDGH:2016}.
On the other hand, the majority of the VEM contributions assume a dominant diffusion and do not address the significant case of convection dominated problems. Indeed, as it happens for standard FEM, unless some ad-hoc modification is introduced, also the VEM is expected to suffer in convection dominated regimes, leading to very large errors unless the mesh is extremely fine.
To the best of the authors' knowledge, only in the papers \cite{berrone:2016,BBM-2018} such issue is addressed; in these articles a SUPG-stabilized Virtual Element scheme for conforming and non-conforming VEM is proposed, and analyzed both theoretically and numerically. However, the stability and convergence analysis in \cite{berrone:2016,BBM-2018} is not uniform in the diffusion/convection parameters, and therefore it cannot be used to theoretically justify the method behaviour in the convection dominated regime. Moreover, a sufficiently small mesh size $h$ is required to carry out the analysis. The main difficulty in deriving uniform error estimates for SUPG-stabilized VEM is handling a variable convection coefficient in the presence of projection operators (which are needed in the VEM construction), that partially disrupt the structure of the convection term.

The aim of the present paper is to address, in the conforming case, this challenging theoretical aspect, thus deriving convergence estimates for a slight modification of the SUPG VEM scheme of \cite{berrone:2016} 
that are robust in the involved parameters and do not require a sufficiently small $h$ condition. 
We think that, in addition to filling an important theoretical gap, having this deeper understanding is fundamental in order to develop SUPG stabilizations in more complex settings, such as fluid-dynamics problems. For instance, deriving the aforementioned proofs inspired us to propose also a novel (alternative) approach for the discretization of the convective term, in addition to the original one. For the (slightly modified) discrete convection form  introduced in \cite{berrone:2016}, we are able to show an error estimate that is ``almost uniform'' in the involved parameters, in the sense that the unique term that depends in an unfavourable way on the parameters is damped by a higher order multiplicative factor in $h$. For the novel form here proposed, we are able to show full robustness in the parameters.
Finally, for the sake of completeness we also present a few numerical results, the main objective being to make a practical comparison among some different discretization options described in the previous section.

The present paper is organized as follows. 
In Section 2 we present the continuous problem and in Section 3 we introduce some preliminaries and notation. 
Afterwards, in Section 4 we review the SUPG-stabilized Virtual Element Method under analysis, also introducing the novel convective term option.
The main contribution of this article is Section 5, where we develop the aforementioned convergence analysis. The numerical tests are shown in Section 6.


Throughout the paper, we will follow the usual notation for Sobolev spaces
and norms \cite{Adams:1975}.
Hence, for an open bounded domain $\omega$,
the norms in the spaces $W^s_p(\omega)$ and $L^p(\omega)$ are denoted by
$\|{\cdot}\|_{W^s_p(\omega)}$ and $\|{\cdot}\|_{L^p(\omega)}$ respectively.
Norm and seminorm in $H^{s}(\omega)$ are denoted respectively by
$\|{\cdot}\|_{s,\omega}$ and $|{\cdot}|_{s,\omega}$,
while $(\cdot,\cdot)_{\omega}$ and $\|\cdot\|_{\omega}$ denote the $L^2$-inner product and the $L^2$-norm (the subscript $\omega$ may be omitted when $\omega$ is the whole computational
domain $\Omega$).

\section{Continuous Problem}
\label{sec:cp}

Let $\Omega \subset \R^2$ be the computational domain and let
$\epsilon > 0$ represent the diffusive coefficient (assumed to be constant), while  
$\bb \in [L^{\infty}(\Omega)]^2$ with $\dd \bb = 0$, is the  transport advective field, and
$f\in L^2(\Omega)$ is 
the volume source term.
Then, our linear steady advection-diffusion model problem reads
\begin{equation}
\label{eq:problem-c}
\left \{
\begin{aligned}
& \text{find $u \in V$ s.t.} 
\\
& \epsilon a(u,  v) + b(u, v) = ( f,  v) \qquad \text{for all $v \in V$,}
\end{aligned}
\right.
\end{equation}
where $V=H^1_0(\Omega)$ and the bilinear forms
$a(\cdot,  \cdot) \colon V \times V \to \R$ 
and
$b(\cdot,  \cdot) \colon V \times V \to \R$ 
are
\begin{equation}
\label{eq:a-c}
a(u,  v) := \int_{\Omega} \nabla u \cdot \nabla v \, {\rm d}\Omega
\qquad \text{for all $u, v \in V$,}
\end{equation}
\begin{equation}
\label{eq:b-c}
b(u,  v) := \int_{\Omega} \bb \cdot \nabla u \, v \, {\rm d}\Omega
\qquad \text{for all $u, v \in V$.}
\end{equation}
By a direct computation, being $\dd \bb = 0$, it is easy to see that the bilinear form $b(\cdot,  \cdot)$ is skew symmetric, i.e. 
\[
b(u, v) = -b(v, u) \qquad \text{for all $u, v \in V$.}
\]
Therefore, the bilinear form $b(\cdot,  \cdot)$ is equal to its skew-symmetric part, defined as:
\begin{equation}
\label{eq:bskew-c}
\bskew(u,  v) := \frac{1}{2} \bigl (b(u, v) - b(v, u) \bigr)
\qquad \text{for all $u, v \in V$.}
\end{equation}
However, at the discrete level $b(\cdot,  \cdot)$ and $\bskew(\cdot,\cdot)$ will lead to different bilinear forms, in general.

It is well known that discretizing problem \eqref{eq:problem-c} leads to instabilities when the convective
term $\bbO$ is dominant with respect to the diffusive term $\epsilon$ (see for instance \cite{quarteroni:2008}).
In such situations a stabilized form of the problem is required in order to prevent spurious oscillations that can completely spoil the numerical solution. 
In the following sections we propose a virtual elements version of the classical Streamline Upwind Petrov Galerkin (SUPG) approach \cite{hughes:1982,franca:1992}.
From now on, we assume that the material parameters are scaled so that it holds:

\smallskip\noindent
\textbf{(A0) Problem scaling.} $\bbO=1$.

\smallskip
We finally remark that the proposed approach can be trivially extended to more general situations such as reaction-convection-diffusion problems, non-constant diffusive coefficients and different boundary conditions. We here prefer to focus on the fundamental difficulties of the problem rather than deploying a ``smokescreen'' of additional minor technicalities. Moreover, also the analysis of the three dimensional case could be developed with very similar arguments.

\section{Definitions and preliminaries}
\label{sec:notations}

\subsection{SUPG stabilizing form}
\label{sub:supg form}

From now on, we will denote with $E$ a general polygon,  $e$ will denote a general edge of $E$, moreover $|E|$ and $h_E$ will denote the area  and the diameter of $E$ respectively, whereas 
$\nn^E$ will denote the unit outward normal vector to $\partial E$.
Let $\set{\Omega_h}_h$ be a sequence of decompositions of $\Omega$ into general polygons $E$, 
where $h = \sup_{E \in \Omega_h} h_E$.
We suppose that $\set{\Omega_h}_h$ fulfils the following assumption:\\
\textbf{(A1) Mesh assumption.}
There exists a positive constant $\rho$ such that for any $E \in \set{\Omega_h}_h$   
\begin{itemize}
\item $E$ is star-shaped with respect to a ball $B_E$ of radius $ \geq\, \rho \, h_E$;
\item any edge $e$ of $E$ has length  $ \geq\, \rho \, h_E$.
\end{itemize}
We remark that the hypotheses above, though not too restrictive in many practical cases, could possibly be further relaxed, combining the present analysis with the studies in~\cite{BLR:2017,brenner-sung:2018,chen-huang:2018,BV:2020}.

We now briefly review the construction of the SUPG stabilization \cite{hughes:1982,franca:1992} for the advection-dominated problem \eqref{eq:problem-c}. 
First of all, we decompose the bilinear forms
$a(\cdot, \cdot)$ and $\bskew(\cdot, \cdot)$ into local contributions, by defining
\[
a(u,  v) =: \sum_{E \in \Omega_h} a^E(u,  v) \,,
\qquad
\bskew(u,  v) =: \sum_{E \in \Omega_h} \bskewE(u,  v)
\,.
\]
Let us introduce the bilinear form $\AtsupgE(\cdot, \cdot)$, defined for all sufficiently regular functions by:
\begin{equation}
\label{eq:AtsupgE}
\AtsupgE(u, v) := \epsilon \, a^E(u, v)  + \bskewE(u, v) + \Btstab(u, v) + \Ltstab(u, v),
\end{equation}
where
\begin{align}
\label{eq:BtstabE}
\Btstab(u, v) &:= \tau_E \int_E \bb \cdot \nabla u \, (\bb \cdot \nabla v) \, {\rm d}E 
\\
\label{eq:LtstabE}
\Ltstab(u, v) &:= \tau_E \int_E -\epsilon \, \Delta u \,  (\bb \cdot \nabla v) \, {\rm d}E \,,
\end{align}
and the SUPG parameter $\tau_E > 0$ has to be chosen.
The corresponding stabilized right-hand side $\FtsupgE(\cdot)$ is defined by
\begin{equation}
\label{eq:FtsupgE}
\FtsupgE(v) := \int_E f \, v \, {\rm d}E + \tau_E \int_E f \, \bb \cdot  \nabla v \, {\rm d}E \,.
\end{equation}
The global approximated bilinear form $\Atsupg(\cdot,  \cdot)$ and the global right-hand side 
are defined by simply summing the local contributions:
\begin{align}
\label{eq:Atsupg}
\Atsupg(u, v)  &:= \sum_{E \in \Omega_h} \AtsupgE(u,  v) 
\\
\label{eq:Ftsupg}
\Ftsupg(v) &:= \sum_{E \in \Omega_h} \FtsupgE(v)  \,.
\end{align}
Since the exact solution $u$ of equation \eqref{eq:problem-c} satisfies 
$-\epsilon \, \Delta u + \bb \cdot \nabla u = f \in L^2(\Omega)$,
then $\Atsupg(u, v)$ is well defined for all $v \in V$ and $u$ solves the stabilized problem
\begin{equation}
\label{eq:supg}
\left \{
\begin{aligned}
& \text{find $u \in V$ s.t.} 
\\
& \Atsupg(u, \, v) = \Ftsupg(v) \qquad \text{for all $v \in V$.}
\end{aligned}
\right.
\end{equation}

The aim of the following sections is to derive a VEM discretization of the stabilized problem \eqref{eq:supg}.
In the following the symbol $\lesssim$ will denote a bound up to a generic positive constant, independent of the mesh size $h$, of the SUPG parameter $\tau_E$, of the diffusive coefficient $\epsilon$ and of the transport advective field $\bb$, 
but which may depend on  $\Omega$, on the ``polynomial'' order of the method $k$ and on the regularity constant appearing in the mesh assumption \textbf{(A1)}.

\subsection{Projections and polynomial approximation properties}
\label{sub:proj}

In the present subsection we introduce some basic tools and notations useful in the construction and the theoretical analysis of Virtual Element Methods.

Using standard VEM notations, for $n \in \N$, $m\in \N$ and $p= 1, \dots, \infty$, and for any $E \in \Omega_h$,  
let us introduce the spaces:
\begin{itemize}
\item $\Pk_n(\omega)$: the set of polynomials on $\omega$ of degree $\leq n$  (with $\Pk_{-1}(\omega)=\{ 0 \}$),
\item $\Pk_n(\Omega_h) := \{q \in L^2(\Omega) \quad \text{s.t} \quad q|_E \in  \Pk_n(E) \quad \text{for all $E \in \Omega_h$}\}$,
\item $W^m_p(\Omega_h) := \{v \in L^2(\Omega) \quad \text{s.t} \quad v|_E \in  W^m_p(E) \quad \text{for all $E \in \Omega_h$}\}$ equipped with the broken norm and seminorm
\[
\begin{aligned}
&\|v\|^p_{W^m_p(\Omega_h)} := \sum_{E \in \Omega_h} \|v\|^p_{W^m_p(E)}\,,
\qquad 
&|v|^p_{W^m_p(\Omega_h)} := \sum_{E \in \Omega_h} |v|^p_{W^m_p(E)}\,,
\qquad & \text{if $1 \leq p < \infty$,}
\\
&\|v\|_{W^m_p(\Omega_h)} := \max_{E \in \Omega_h} \|v\|_{W^m_p(E)}\,,
\qquad 
&|v|_{W^m_p(\Omega_h)} := \max_{E \in \Omega_h} |v|_{W^m_p(E)}\,,
\qquad & \text{if $p = \infty$,}
\end{aligned}
\]
\end{itemize}
and the following polynomial projections:
\begin{itemize}
\item the $\boldsymbol{L^2}$\textbf{-projection} $\Pi_n^{0, E} \colon L^2(E) \to \Pk_n(E)$, given by
\begin{equation}
\label{eq:P0_k^E}
\int_Eq_n (v - \, {\Pi}_{n}^{0, E}  v) \, {\rm d} E = 0 \qquad  \text{for all $v \in L^2(E)$  and $q_n \in \Pk_n(E)$,} 
\end{equation} 
with obvious extension for vector functions $\boldsymbol{\Pi}^{0, E}_{n} \colon [L^2(E)]^2 \to [\Pk_n(E)]^2$;

\item the $\boldsymbol{H^1}$\textbf{-seminorm projection} ${\Pi}_{n}^{\nabla,E} \colon H^1(E) \to \Pk_n(E)$, defined by 
\begin{equation}
\label{eq:Pn_k^E}
\left\{
\begin{aligned}
& \int_E \gr  \,q_n \cdot \gr ( v - \, {\Pi}_{n}^{\nabla,E}   v)\, {\rm d} E = 0 \quad  \text{for all $v \in H^1(E)$ and  $q_n \in \Pk_n(E)$,} \\
& \int_{\partial E}(v - \,  {\Pi}_{n}^{\nabla, E}  v) \, {\rm d}s= 0 \, ,
\end{aligned}
\right.
\end{equation}
\end{itemize}
with global counterparts 
$\Pi_n^{0} \colon L^2(\Omega) \to \Pk_n(\Omega_h)$ and
${\Pi}_{n}^{\nabla} \colon H^1(\Omega_h) \to \Pk_n(\Omega_h)$
defined by
\begin{equation}
\label{eq:proj-global}
(\Pi_n^{0} v)|_E = \Pi_n^{0,E} v \,,
\qquad
(\Pi_n^{\nabla} v)|_E = \Pi_n^{\nabla,E} v \,,
\qquad
\text{for all $E \in \Omega_h$.}
\end{equation}

We finally mention two classical results for polynomials on star-shaped domains (see for instance \cite{brenner-scott:book}).

\begin{lemma}[Bramble-Hilbert]
\label{lm:bramble}
Under the assumption \textbf{(A1)}, for any $E \in \Omega_h$ and for any  smooth enough function $\phi$ defined on $E$, it holds 
\[
\begin{aligned}
&\|\phi - \Pi^{0,E}_n \phi\|_{W^m_p(E)} \lesssim h_E^{s-m} |\phi|_{W^s_p(E)} 
\qquad & \text{$s,m \in \N$, $m \leq s \leq n+1$, $p=1, \dots, \infty$,}
\\
&\|\phi - \Pi^{\nabla,E}_n \phi\|_{m,E} \lesssim h_E^{s-m} |\phi|_{s,E} 
\qquad & \text{$s,m \in \N$, $m \leq s \leq n+1$, $s \geq 1$,}
\\
&\|\nabla \phi - \boldsymbol{\Pi}^{0,E}_{n} \nabla \phi\|_{m,E} \lesssim h_E^{s-1-m} |\phi|_{s,E} 
\qquad & \text{$s,m \in \N$, $m \leq s \leq n+2$, $s \geq 1$.}
\end{aligned}
\]
\end{lemma}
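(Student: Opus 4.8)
The plan is to derive all three bounds from the classical Bramble--Hilbert machinery built on averaged Taylor polynomials, combined with inverse estimates and projection stability, all of which hold with constants depending only on $\rho$ thanks to the star-shapedness in \textbf{(A1)}. For the first bound, fix $s$ with $m \le s \le n+1$ and let $q \in \Pk_{s-1}(E) \subseteq \Pk_n(E)$ be the averaged Taylor polynomial of $\phi$ of order $s$. The Bramble--Hilbert lemma on a domain star-shaped with respect to a ball of radius $\ge \rho\, h_E$ gives $\|\phi - q\|_{W^j_p(E)} \lesssim h_E^{s-j}|\phi|_{W^s_p(E)}$ for every $0 \le j \le s$, with the power of $h_E$ made uniform via \textbf{(A1)}. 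I would then split
\[
\|\phi - \Pi^{0,E}_n \phi\|_{W^m_p(E)} \le \|\phi - q\|_{W^m_p(E)} + \|q - \Pi^{0,E}_n \phi\|_{W^m_p(E)},
\]
bounding the first summand directly by Bramble--Hilbert.

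For the second (polynomial) summand I would invoke the inverse inequality $\|r\|_{W^m_p(E)} \lesssim h_E^{-m}\|r\|_{L^p(E)}$, valid for $r \in \Pk_n(E)$ under \textbf{(A1)}, reducing matters to the $L^p$-norm of $q - \Pi^{0,E}_n \phi$. Writing $q - \Pi^{0,E}_n\phi = (q - \phi) + (\phi - \Pi^{0,E}_n\phi)$ and using $\Pi^{0,E}_n q = q$ together with the $L^p$-stability of the $L^2$-projection (i.e.\ $\|\Pi^{0,E}_n v\|_{L^p(E)} \lesssim \|v\|_{L^p(E)}$), one gets $\|\phi - \Pi^{0,E}_n\phi\|_{L^p(E)} \lesssim \|\phi - q\|_{L^p(E)} \lesssim h_E^{s}|\phi|_{W^s_p(E)}$. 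Collecting the pieces produces the claimed $h_E^{s-m}$ rate.

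The second bound is proved along the same lines, now using that $\Pi^{\nabla,E}_n$ is the best $\Pk_n(E)$-approximation in the $H^1$-seminorm and reproduces $\Pk_n(E)$, so that $|\phi - \Pi^{\nabla,E}_n\phi|_{1,E} \le |\phi - q|_{1,E} \lesssim h_E^{s-1}|\phi|_{s,E}$; the order-zero component is controlled through the boundary-average normalization in the definition of $\Pi^{\nabla,E}_n$ together with a Poincar\'e-type inequality for functions of zero boundary mean, and the higher seminorms are recovered via the same inverse-estimate splitting as above. The third bound follows immediately from the first applied componentwise to $\nabla\phi$ with regularity index $s-1$ and $p=2$, since $|\nabla\phi|_{s-1,E} = |\phi|_{s,E}$ and the constraint $m \le s-1 \le n+1$ matches $m \le s \le n+2$.

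The genuinely delicate points, rather than the splitting itself, are the two uniformity statements: the inverse inequalities and the $L^p$-stability of the $L^2$-projection must hold with constants independent of $E$ across the whole mesh family. This is exactly where \textbf{(A1)} enters, allowing a scaling argument to a configuration of unit diameter star-shaped with respect to a ball of radius $\ge \rho$, on which the norm equivalences on the finite-dimensional polynomial space and the boundedness of the projections are immediate; tracking the powers of $h_E$ back through the scaling yields the stated exponents.
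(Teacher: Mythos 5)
Your proof is correct and is essentially the paper's own: the paper states this lemma without proof, citing the classical theory for star-shaped domains (Brenner--Scott), and your argument --- averaged Taylor polynomial plus the Bramble--Hilbert lemma, the splitting through a near-best polynomial, inverse estimates and $L^p$-stability of $\Pi^{0,E}_n$, with all constants made uniform by scaling under \textbf{(A1)} --- is precisely the standard proof from that reference, including the correct treatment of $\Pi^{\nabla,E}_n$ via its boundary-average normalization and a Poincar\'e inequality. One small caveat: deriving the third bound from the first applied componentwise to $\nabla\phi$ requires $m \le s-1$, which is strictly stronger than the stated range $m \le s \le n+2$ that you claim it ``matches''; at $s=m$ the estimate does not follow this way (the left-hand side involves order-$(m+1)$ derivatives of $\phi$ while the right-hand side controls only order $m$), but this borderline case is a slight overstatement in the lemma itself and is never used in the paper.
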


\begin{lemma}[Inverse estimate]
\label{lm:inverse}
Let, for any $E \in \Omega_h$, $\cinv$ denote the smallest positive constant such that 
for any $\pp_n \in [\Pk_n(E)]^2$,  it holds
\[
\|\dd \pp_n\|^2_{0, E} \leq \cinv h_E^{-2} \|\pp_n\|^2_{0,E} \,.
\]
Then, under assumption {\bf (A1)}, there exists $\gamma \in {\mathbb R}^+$ such that $\cinv \le \gamma$ for all $E \in \{ \Omega_h \}_h$.
\end{lemma}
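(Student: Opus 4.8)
The plan is to prove the estimate by a classical scaling argument combined with the equivalence of all norms on the finite-dimensional space $[\Pk_n(E)]^2$. The two geometric ingredients supplied by assumption \textbf{(A1)} are the ball $B_E = B(\xx_0, r_E)$ with $r_E \geq \rho\, h_E$ contained in $E$, and, since $h_E$ is the diameter of $E$ and $\xx_0 \in E$, the inclusion $E \subseteq B(\xx_0, h_E)$. Hence every element $E$ fits into a concentric sandwich $B(\xx_0, \rho\, h_E) \subseteq E \subseteq B(\xx_0, h_E)$ whose two radii differ only by the fixed factor $\rho$; this shape uniformity is what will ultimately make the constant independent of $E$.

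First I would reduce to a reference configuration of unit diameter. Writing $\xx = \xx_0 + h_E \hat\xx$ and $\hat\pp_n(\hat\xx) := \pp_n(\xx_0 + h_E\hat\xx) \in [\Pk_n(\hat E)]^2$, the two-dimensional change of variables gives $\|\pp_n\|_{0,E}^2 = h_E^2\,\|\hat\pp_n\|_{0,\hat E}^2$, while each partial derivative carries a factor $h_E^{-1}$, so that $\|\dd\pp_n\|_{0,E}^2 = \|\dd\hat\pp_n\|_{0,\hat E}^2$ (the divergence now taken in the $\hat\xx$ variable, still denoted $\dd$). The rescaled domain satisfies $B(\boldsymbol{0},\rho) \subseteq \hat E \subseteq B(\boldsymbol{0},1)$, and the claim is therefore equivalent to the scale-free bound $\|\dd\hat\pp_n\|_{0,\hat E}^2 \leq \cinv\,\|\hat\pp_n\|_{0,\hat E}^2$: it suffices to produce a constant depending only on $n$ and $\rho$ that controls its left-hand side.

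I would then chain four elementary steps on the reference configuration, where for brevity I drop the hats and write $E$, $\pp_n$ for $\hat E$, $\hat\pp_n$:
\[
\|\dd\pp_n\|_{0,E} \leq \|\dd\pp_n\|_{0,B(\boldsymbol{0},1)} \leq C(n)\,\|\pp_n\|_{0,B(\boldsymbol{0},1)} \leq C(n,\rho)\,\|\pp_n\|_{0,B(\boldsymbol{0},\rho)} \leq C(n,\rho)\,\|\pp_n\|_{0,E}.
\]
The first and last inequalities are monotonicity of the $L^2$-norm under the inclusions $E \subseteq B(\boldsymbol{0},1)$ and $B(\boldsymbol{0},\rho) \subseteq E$. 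The second is the inverse inequality on the \emph{fixed} ball $B(\boldsymbol{0},1)$, obtained from norm equivalence on $[\Pk_n]^2$ with a constant $C(n)$. Squaring the resulting bound yields $\cinv \leq (C(n)\,C(n,\rho))^2 =: \gamma$, which is uniform over the whole sequence $\{\Omega_h\}_h$.

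The only genuinely non-trivial link is the third inequality, which controls the norm of $\pp_n$ on the larger ball $B(\boldsymbol{0},1)$ by its norm on the smaller concentric ball $B(\boldsymbol{0},\rho)$. This ``reverse'' direction fails for generic $L^2$ functions and relies crucially on finite-dimensionality: both $\|\cdot\|_{0,B(\boldsymbol{0},1)}$ and $\|\cdot\|_{0,B(\boldsymbol{0},\rho)}$ are genuine norms on $[\Pk_n]^2$ (a polynomial vanishing on an open ball is identically zero), hence equivalent, with an equivalence constant $C(n,\rho)$ attached to two fixed reference balls and thus independent of $E$. This is exactly the point at which assumption \textbf{(A1)}, through the scale-invariant ratio $\rho$, enters to guarantee uniformity.
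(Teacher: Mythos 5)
Your proof is correct. Note, however, that the paper itself offers no proof of this lemma: it is presented as one of ``two classical results for polynomials on star-shaped domains'' with a pointer to the Brenner--Scott book, so there is no internal argument to compare against. What you have written is precisely the standard textbook proof that such a citation stands in for: the affine rescaling $\xx = \xx_0 + h_E\hat\xx$ (which correctly produces the factor $h_E^{-2}$, since the $L^2$-norm scales as $h_E^2$ while the divergence contributes $h_E^{-2}$), followed by the sandwich $B(\boldsymbol{0},\rho)\subseteq \hat E\subseteq B(\boldsymbol{0},1)$ and two applications of norm equivalence on the finite-dimensional space $[\Pk_n]^2$, the key one being the ``reverse'' bound from the small concentric ball to the large one, which you rightly identify as the step where finite-dimensionality and the uniform ratio $\rho$ from \textbf{(A1)} enter. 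Two small observations, neither a gap: your argument uses only the inclusion $B_E\subseteq E$ together with $\xx_0\in E$ and ${\rm diam}(E)=h_E$, not star-shapedness itself, so your proof is in fact slightly more general than the stated hypothesis; and the existence of the \emph{smallest} constant $\cinv$ implicitly used in the statement follows from compactness of the unit sphere of $[\Pk_n(E)]^2$, which your chain of inequalities bounds uniformly, yielding $\cinv\le\gamma$ as required.
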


\section{Virtual Element Discretization}
\label{sub:VEM}

\subsection{Virtual Element spaces}
\label{sub:spaces}

Let $k \geq 1$ be the ``polynomial'' order of the method. 
For any $E \in \Omega_h$  we consider the local ``enhanced'' virtual element space \cite{projectors} given by
\begin{equation}
\label{eq:vem space}
\begin{aligned}
V_h(E) = 
\bigl\{
v_h \in H^1(E) \cap & C^0(\partial E) \quad \text{s.t.} \quad 
 v_h|_e  \in \Pk_k(e) \quad \text{for all $e \in \partial E$,} 
\bigr .
\\
\bigl .
& \Delta v_h \in \Pk_k(E) \,, \quad
(v - \PN v, \, \widehat{p}_k ) = 0 \quad 
\text{for all $\widehat{p}_k \in \Pk_k(E) / \Pk_{k-2}(E)$}
\bigr \} \,.
\end{aligned}
\end{equation}
We here summarize the main properties of the space $V_h(E)$
(we refer to \cite{projectors} for a deeper analysis).

\begin{itemize}
\item [\textbf{(P1)}] \textbf{Polynomial inclusion:} $\Pk_k(E) \subseteq V_h(E)$;


\item [\textbf{(P2)}] \textbf{Degrees of freedom:}
the following linear operators $\mathbf{D_V}$ constitute a set of DoFs for $V_h(E)$:
\begin{itemize}
\item[$\mathbf{D_V1}$] the values of $v_h$ at the vertexes of the polygon $E$,
\item[$\mathbf{D_V2}$] the values of $v_h$ at $k-1$ distinct points of every edge $e \in \partial E$,
\item[$\mathbf{D_V3}$] the moments of $v_h$ against a polynomial basis  $\{m_i\}_i$ of $\Pk_{k-2}(E)$  s.t. $\|m_i\|_{L^{\infty}(E)} = 1$:
$$
\frac{1}{|E|}\int_E v_h \, m_{i} \, {\rm d}E  \,;
$$
\end{itemize}
\item [\textbf{(P3)}] \textbf{Polynomial projections:}
the DoFs $\mathbf{D_V}$ allow us to compute the following linear operators:
\[
\PN \colon V_h(E) \to \Pk_k(E), \qquad
\P0 \colon V_h(E) \to \Pk_{k}(E), \qquad
\PP0P \colon \nabla V_h(E) \to [\Pk_{k}(E)]^2 \,.
\]
\end{itemize}
The global virtual element space is obtained by gluing such local spaces, i.e.
\begin{equation}
\label{eq:vem global space}
V_h(\Omega_h) = \{v_h \in V \quad \text{s.t.} \quad v_h|_E \in V_h(E) \quad \text{for all $E \in \Omega_h$} \} 
\end{equation}
with the associated set of degrees of freedom.

We finally recall from \cite{cangiani:2017,brenner-sung:2018} the optimal approximation property for the space $V_h(\Omega_h)$.

\begin{lemma}[Approximation using virtual element functions]
\label{lm:interpolation}
Under the assumption \textbf{(A1)} for any $v \in V \cap H^{s+1}(\Omega_h)$ there exists $\vint \in V_h(\Omega_h)$ such that for all $E \in \Omega_h$ it holds
\[
\|v - \vint\|_{0,E} + h_E \|\nabla (v - \vint)\|_{0,E} \lesssim h_E^{s+1} |v|_{s+1,E} \,. 
\]
where $0 < s \le k$.
\end{lemma}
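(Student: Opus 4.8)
The plan is to build $\vint$ element by element and then check that the local pieces glue into a global conforming function. For each $E \in \Omega_h$ I define the local interpolant $\mathcal{I}_E v \in V_h(E)$ as the unique virtual function whose degrees of freedom $\mathbf{D_V}$ coincide with those of $v$; this is well posed because, for $s>0$, in two dimensions $H^{s+1}(E) \hookrightarrow C^0(\overline E)$, so the pointwise functionals $\mathbf{D_V1}$ and $\mathbf{D_V2}$ make sense. Setting $\vint|_E := \mathcal{I}_E v$, the shared vertex and edge values on any interior edge $e$ agree from the two adjacent elements, so $\vint|_e$ is the same degree-$k$ polynomial seen from either side and $\vint$ is continuous across $e$; since $v \in V = H^1_0(\Omega)$ its boundary degrees of freedom vanish. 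Hence $\vint \in V_h(\Omega_h) \subset V$, and the whole statement reduces to the local bound on each $E$.

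For the local estimate fix $E$ and set $\vpi := \PN v \in \Pk_k(E)$. By \textbf{(P1)} we have $\Pk_k(E) \subseteq V_h(E)$, and since the degrees of freedom are unisolvent the operator $\mathcal{I}_E$ reproduces polynomials, so $\mathcal{I}_E \vpi = \vpi$. Writing $v - \vint = (v - \vpi) - \mathcal{I}_E(v - \vpi)$ and applying the triangle inequality gives
\[
\|v - \vint\|_{0,E} + h_E |v - \vint|_{1,E} \le \bigl(\|v - \vpi\|_{0,E} + h_E |v - \vpi|_{1,E}\bigr) + \bigl(\|\mathcal{I}_E(v-\vpi)\|_{0,E} + h_E |\mathcal{I}_E(v-\vpi)|_{1,E}\bigr).
\]
The first parenthesis is bounded by $h_E^{s+1}|v|_{s+1,E}$ directly from the Bramble-Hilbert estimate of Lemma \ref{lm:bramble}. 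Thus everything reduces to a scaled stability bound for $\mathcal{I}_E$ of the form $\|\mathcal{I}_E w\|_{0,E} + h_E |\mathcal{I}_E w|_{1,E} \lesssim \|w\|_{0,E} + h_E |w|_{1,E} + h_E^2 |w|_{2,E}$, which, applied to $w = v - \vpi$ and combined once more with Lemma \ref{lm:bramble}, yields the claim.

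The main obstacle is precisely this stability estimate, because elements of $V_h(E)$ are known only implicitly and their energy is not an explicit function of the degrees of freedom. I would exploit that every $w_h \in V_h(E)$ solves a Poisson problem $-\Delta w_h = g_h$ on $E$ with load $g_h := -\Delta w_h \in \Pk_k(E)$ (by the very definition \eqref{eq:vem space} of the space) and Dirichlet datum $w_h|_{\partial E}$ equal to the edgewise degree-$k$ Lagrange interpolant fixed by $\mathbf{D_V1}$ and $\mathbf{D_V2}$. A scaled a priori estimate for this elliptic problem on the star-shaped element gives $|w_h|_{1,E} \lesssim (\text{boundary energy of } w_h|_{\partial E}) + h_E \|g_h\|_{0,E}$, where the star-shapedness in \textbf{(A1)} is what keeps the constant uniform. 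The boundary energy is controlled by one-dimensional interpolation estimates together with the scaled trace inequality, using that \textbf{(A1)} bounds edge lengths below by $\rho\,h_E$. The load $g_h$ is a polynomial, hence finite dimensional: its moments against $\Pk_{k-2}(E)$ are tied to the interior degrees of freedom $\mathbf{D_V3}$ while its remaining components are recovered from the boundary datum through the enhancement constraint in \eqref{eq:vem space}, and it is bounded by the remaining data with the help of the inverse estimate of Lemma \ref{lm:inverse}. Assembling these pieces yields the $H^1$ part of the stability bound; the $L^2$ part then follows either by the same trace/Poincaré machinery on the star-shaped element or, more cleanly, by a duality (Aubin-Nitsche) argument on $E$. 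This completes the reduction, and the delicate point throughout is the uniform control of the implicitly defined virtual function by its degrees of freedom, which is exactly where the shape-regularity \textbf{(A1)} and Lemma \ref{lm:inverse} enter.
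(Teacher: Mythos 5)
The paper does not actually prove this lemma: it is recalled verbatim from \cite{cangiani:2017,brenner-sung:2018}, so there is no internal proof to compare against. Your sketch reconstructs the standard argument of those references, and its architecture is sound: the DoF interpolant $\mathcal{I}_E v$ is well defined by the embedding $H^{s+1}(E)\hookrightarrow C^0(\overline{E})$ for $s>0$ in two dimensions; conformity and the vanishing boundary values are argued correctly; polynomial reproduction follows from \textbf{(P1)} plus unisolvence; the splitting $v-\vint=(I-\mathcal{I}_E)(v-\vpi)$ with $\vpi=\PN v$ together with Lemma \ref{lm:bramble} correctly reduces everything to a scaled DoF-stability bound for $\mathcal{I}_E$; and your mechanism for controlling the implicit function --- viewing $w_h\in V_h(E)$ as the solution of a Poisson problem with polynomial load, whose moments against $\Pk_k(E)$ are recovered from the boundary data, the moments $\mathbf{D_V3}$ and the enhancement constraint via integration by parts, with Lemma \ref{lm:inverse} and the star-shapedness in \textbf{(A1)} keeping all constants uniform --- is exactly how the cited proofs proceed.

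Two points, however, need repair. First, your stability estimate ends with the term $h_E^2|w|_{2,E}$, so applying it to $w=v-\vpi$ requires $v\in H^2(E)$; but the lemma covers every $0<s\le k$, in particular $s\in(0,1)$, where $|v-\vpi|_{2,E}$ is not even defined and Lemma \ref{lm:bramble} offers no such control. The fix is consistent with your own well-posedness argument: the point-value functionals $\mathbf{D_V1}$, $\mathbf{D_V2}$ are bounded by the scaled $H^{1+s}(E)$ norm for any $s>0$, so the stability bound should carry $h_E^{1+s}|w|_{1+s,E}$ in place of $h_E^2|w|_{2,E}$, concluded with a fractional-order Bramble--Hilbert estimate (note that Lemma \ref{lm:bramble} as stated is restricted to integer $s$ and must be extended by interpolation of Sobolev spaces). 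Second, the closing suggestion of an Aubin--Nitsche duality argument for the $L^2$ part of the stability is off target: duality applies to projections defined by a variational problem, not to a DoF interpolant; the trace/Poincar\'e route you mention first is the one that works, since the mean of $\mathcal{I}_E w$ over $\partial E$ (or its moments, when $k\ge 2$) is directly controlled by the DoFs of $w$ and the remainder by $h_E|\mathcal{I}_E w|_{1,E}$. With these two repairs your outline is a faithful reconstruction of the proofs in \cite{cangiani:2017,brenner-sung:2018}; the steps you assert without proof (the uniform elliptic a priori estimate on star-shaped polygons and the one-dimensional edge interpolation bounds) are genuine but standard, and constitute precisely the technical content of those references.
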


\subsection{Virtual Element Forms}
\label{sub:forms}

The next step in the construction of our method is to define a discrete versions of the stabilized SUPG form $\Atsupg(\cdot, \cdot)$ in \eqref{eq:AtsupgE}.
It is clear that for an arbitrary pair $(u_h,  v_h) \in V_h(E) \times V_h(E)$, the quantity 
$\AtsupgE(u_h,  v_h)$ is not computable since $u_h$ and $v_h$ are not known in closed form.
Therefore, following the usual procedure in the VEM setting, we need to construct a computable discrete bilinear form.
In the following, in accordance with definition \eqref{eq:AtsupgE}, we define a discrete counterpart of each brick composing $\AtsupgE$.

Exploiting property \textbf{(P3)}, let 
$a_h^E(\cdot,  \cdot) \colon V_h(E) \times V_h(E) \to \R$ 
be a computable approximation of the continuous form $a^E(\cdot, \cdot)$, 
defined  for all $u_h$, $v_h \in V_h(E)$ by 
 \begin{equation}
\label{eq:aEh}
a_h^E(u_h,  v_h) :=
\int_E \PZ0P \nabla u_h \cdot \PZ0P \nabla v_h \, {\rm d}E + 
\Stab((I - \PN) u_h, \, (I - \PN) v_h).
\end{equation}
Here, the stabilizing bilinear form $\Stab(\cdot, \cdot) \colon V_h(E) \times V_h(E) \to \R$  satisfies
\begin{equation}
\label{eq:sEh}
\alpha_*|v_h|_{1,E}^2 \leq \Stab(v_h, v_h) \leq \alpha^* |v_h|_{1,E}^2
\qquad \text{for all $v_h \in {\rm Ker}(\PN)$}
\end{equation}
for two positive uniform constants $\alpha_*$ and $\alpha^*$.
The condition above essentially requires the stabilizing term $\Stab(v_h, v_h)$ to scale as
$|v_h|_{1,E}^2$. 
For instance, the standard choices for the stabilization are the  \texttt{dofi-dofi} stabilization \cite{volley} and the \texttt{D-recipe} stabilization
introduced in \cite{BDR:2017}.

Concerning the approximation of the convective term $b^E(\cdot, \cdot)$, we here propose two possible choices: 
recalling property \textbf{(P3)}, let us define for all $u_h, v_h \in V_h(E)$ the following computable bilinear forms
\begin{align}
\label{eq:bfp}
\bfp(u_h, v_h) &:= \int_E \bb \cdot \PP0P \nabla u_h  \, \P0 v_h \, {\rm d}E \,,
\\
\label{eq:bfb}
\bfb(u_h, v_h) &:= \int_E \bb \cdot \nabla \P0 u_h  \, \P0 v_h \, {\rm d}E + 
\int_{\partial E} (\bb \cdot \nn^E) (I - \P0) u_h  \, v_h \, {\rm d}s \,.
\end{align}
While the form \eqref{eq:bfp} follows a more standard ``approximation by projection'' VEM approach (see for instance \cite{berrone:2016}), the novel form \eqref{eq:bfb} is amenable to the development of an improved theoretical result.
In the following $b_h^E(\cdot, \cdot) \colon V_h(E) \times V_h(E) \to \R$ will denote indifferently one of the aforementioned forms and, in accordance with \eqref{eq:bskew-c}, for all $u_h, v_h \in V_h(E)$ we define 
\begin{equation}
\label{eq:bskewEh}
\bskewEh(u_h, v_h) := \frac{1}{2} \bigl (b_h^E(u_h, v_h) - b_h^E(v_h, u_h) \bigr) \,.
\end{equation}
Exploiting again property \textbf{(P3)}, the stabilized forms $\Btstab(\cdot, \cdot)$ in \eqref{eq:BtstabE} and $\Ltstab(\cdot, \cdot)$ in \eqref{eq:LtstabE} are discretized as follows
\begin{align}
\label{eq:bstab}
\Bstab(u_h, v_h) &:= \tau_E \int_E \bb \cdot \PZ0P \nabla u_h \, \bb \cdot \PZ0P \nabla v_h \, {\rm d}E +
\tau_E \beta_E^2 \Stab((I - \PN)u_h, (I - \PN)v_h)
\\
\label{eq:lstab}
\Lstab(u_h, v_h) &:= \tau_E \int_E -\epsilon \, \dd \PZ0P \nabla u_h \,  \bb \cdot \PZ0P \nabla v_h \, {\rm d}E
\end{align}
where $\beta_E := \bbE$ and the parameter $\tau_E > 0$ has to be chosen.

In accordance with \eqref{eq:AtsupgE}, the  VEM stabilized form $\AsupgE(\cdot, \cdot) \colon V_h(E) \times V_h(E) \to \R$  is defined by
\begin{equation}
\label{eq:AsupgE}
\AsupgE(u_h, v_h) := \epsilon \, a_h(u_h, v_h)  + \bskewEh(u_h, v_h) + \Bstab(u_h, v_h) + \Lstab(u_h, v_h)
\end{equation}
for all $u_h, v_h \in V_h(E)$.

The global approximated bilinear form $\Asupg(\cdot,  \cdot) \colon V_h(\Omega_h) \times V_h(\Omega_h) \to \R$ is thus defined by summing the local contributions, i.e.
\begin{equation}
\label{eq:Asupg}
\Asupg(u, v)  := \sum_{E \in \Omega_h} \AsupgE(u_h,  v_h) 
\qquad \text{for all $u_h, v_h \in V_h(\Omega_h)$.}
\end{equation}
The corresponding computable VEM version of the SUPG right-hand side in \eqref{eq:FtsupgE} reads as 
\begin{equation}
\label{eq:FsupgE}
\FsupgE(v_h) := \int_E f \, \P0 v_h \, {\rm d}E + \tau_E \int_E f \, \bb \cdot \PZ0P \nabla v_h \, {\rm d}E 
\end{equation}
and its global counterpart is 
\begin{equation}
\label{eq:Fsupg}
\Fsupg(v_h) := \sum_{E \in \Omega_h} \FsupgE(v_h) 
\qquad \text{for all $v_h \in V_h(\Omega_h)$.}
\end{equation}

\subsection{Virtual Element SUPG problem}
\label{sub:problem}

Referring to the discrete space \eqref{eq:vem global space}, the discrete bilinear form \eqref{eq:Asupg}  and the approximated right-hand side \eqref{eq:Fsupg}, 
the virtual element SUPG approximation of the advection-dominated diffusion equation \eqref{eq:problem-c} is 
\begin{equation}
\label{eq:supg-vem}
\left \{
\begin{aligned}
& \text{find $u_h \in V_h(\Omega_h)$ s.t.} 
\\
& \Asupg(u_h, \, v_h) = \Fsupg(v) \qquad \text{for all $v_h \in V_h(\Omega_h)$.}
\end{aligned}
\right.
\end{equation}

\section{Theoretical analysis}
\label{sec:theory}
In this section we analyze the stabilization method defined in \eqref{eq:supg-vem}.
In  particular, we assess the stability property of problem \eqref{eq:supg-vem} and we provide the convergence error estimate for the discrete solution obtained with both discrete convective forms defined in \eqref{eq:bfp} and \eqref{eq:bfb}. 
All estimates clearly display the dependence on the mesh size $h$, on the parameter $\tau_E$ and the problem data $\epsilon$ and $\bb$.

\subsection{Stability}
\label{sub:stability}

Let us start with the stability analysis for the proposed VEM SUPG method.
First of all we define the VEM SUPG norm
\begin{equation}
\label{eq:loc_nom_def}
\|v_h\|^2_{\supg, E} := 
\epsilon \, \|\nabla v_h\|^2_{0,E} + 
\tau_E \, \|\bb \cdot \PZ0P \nabla v_h\|^2_{0,E} + 
\tau_E \, \beta_E^2 \, \|\nabla(I - \PN ) v_h\|^2_{0,E} 
\end{equation}
with global counterpart
\begin{equation}
\label{eq:glb_norm_def}
\|v_h\|^2_{\supg} := \sum_{E \in \Omega_h} \|v_h\|^2_{\supg, E} \,.
\end{equation}

\begin{proposition}[Coercivity]
\label{prp:coerciviity}
Under the assumption \textbf{(A1)}  if the parameters $\tau_E$ satisfy
\begin{equation}
\label{eq:tauEstab}
\tau_E \leq \frac{h_E^2}{\epsilon \cinv} \qquad \forall E \in \Omega_h
\end{equation}
where $\cinv$ is the constant appearing in the inverse estimate of Lemma \ref{lm:inverse}, the bilinear form $\AsupgE(\cdot, \cdot)$  satisfies for all $v_h \in V_h(E)$ the coerciveness inequality
\[
\|v_h\|^2_{\supg, E} \lesssim  \AsupgE(v_h, v_h) \,.
\]
\end{proposition}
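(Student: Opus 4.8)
The plan is to evaluate $\AsupgE(v_h, v_h)$ brick by brick, show that the only sign-indefinite contribution can be absorbed into the definite ones thanks to the parameter restriction \eqref{eq:tauEstab}, and then check that the survivors control each of the three pieces of $\|v_h\|^2_{\supg,E}$. First I would observe that the skew-symmetric convection form disappears on the diagonal: by the very definition \eqref{eq:bskewEh}, $\bskewEh(v_h, v_h) = 0$. Expanding the remaining terms through \eqref{eq:aEh}, \eqref{eq:bstab} and \eqref{eq:lstab}, and abbreviating $w_h := (I - \PN)v_h \in \mathrm{Ker}(\PN)$, I obtain
\[
\AsupgE(v_h, v_h) = \epsilon\|\PZ0P\nabla v_h\|^2_{0,E} + \epsilon\,\Stab(w_h, w_h) + \tau_E\|\bb\cdot\PZ0P\nabla v_h\|^2_{0,E} + \tau_E\beta_E^2\,\Stab(w_h, w_h) + \Lstab(v_h, v_h).
\]
The first four terms are nonnegative and, via the lower stability bound $\Stab(w_h,w_h)\geq\alpha_*|w_h|^2_{1,E}$ in \eqref{eq:sEh}, already control the projected gradient, the diffusive stabilization and the streamline term. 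The single obstacle is $\Lstab(v_h, v_h)$, which has no definite sign.

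The heart of the argument is therefore the absorption of $\Lstab(v_h, v_h)$. I would bound
\[
|\Lstab(v_h, v_h)| \leq \tau_E\epsilon\,\|\dd\,\PZ0P\nabla v_h\|_{0,E}\,\|\bb\cdot\PZ0P\nabla v_h\|_{0,E},
\]
and then apply the inverse estimate of Lemma \ref{lm:inverse} with $\pp_{k-1} = \PZ0P\nabla v_h \in [\Pk_{k-1}(E)]^2$, replacing $\|\dd\,\PZ0P\nabla v_h\|_{0,E}$ by $\sqrt{\cinv}\,h_E^{-1}\|\PZ0P\nabla v_h\|_{0,E}$. The crucial algebraic observation is that the resulting prefactor factorizes as $\tau_E\epsilon\sqrt{\cinv}\,h_E^{-1} = \bigl(\sqrt{\tau_E\epsilon\cinv}\,h_E^{-1}\bigr)\sqrt{\tau_E\epsilon}$, and the first factor is $\leq 1$ \emph{exactly} because of hypothesis \eqref{eq:tauEstab}. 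Hence $|\Lstab(v_h, v_h)| \leq \bigl(\sqrt{\epsilon}\,\|\PZ0P\nabla v_h\|_{0,E}\bigr)\bigl(\sqrt{\tau_E}\,\|\bb\cdot\PZ0P\nabla v_h\|_{0,E}\bigr)$, and a Young inequality gives
\[
|\Lstab(v_h, v_h)| \leq \tfrac12\,\epsilon\|\PZ0P\nabla v_h\|^2_{0,E} + \tfrac12\,\tau_E\|\bb\cdot\PZ0P\nabla v_h\|^2_{0,E},
\]
i.e. precisely one half of each of the two quadratic diagonal terms. Subtracting and using the stability lower bound then yields
\[
\AsupgE(v_h, v_h) \geq \tfrac12\epsilon\|\PZ0P\nabla v_h\|^2_{0,E} + \epsilon\alpha_*\|\nabla w_h\|^2_{0,E} + \tfrac12\tau_E\|\bb\cdot\PZ0P\nabla v_h\|^2_{0,E} + \tau_E\beta_E^2\alpha_*\|\nabla w_h\|^2_{0,E}.
\]

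It remains to recover the full diffusive term $\epsilon\|\nabla v_h\|^2_{0,E}$ appearing in the norm \eqref{eq:loc_nom_def} from the projected gradient and the stabilization. For this I would use the elementary estimate $\|\nabla\PN v_h\|_{0,E} \leq \|\PZ0P\nabla v_h\|_{0,E}$, obtained by testing the orthogonality \eqref{eq:Pn_k^E} with $q_k = \PN v_h$, which gives $\|\nabla\PN v_h\|^2_{0,E} = \int_E\nabla\PN v_h\cdot\nabla v_h = \int_E\nabla\PN v_h\cdot\PZ0P\nabla v_h$ since $\nabla\PN v_h \in [\Pk_{k-1}(E)]^2$, followed by Cauchy--Schwarz. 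A triangle inequality then produces $\|\nabla v_h\|^2_{0,E} \leq 2\|\PZ0P\nabla v_h\|^2_{0,E} + 2\|\nabla w_h\|^2_{0,E}$, so the first term of the norm is dominated by the first two terms of the lower bound, while its remaining two terms are controlled directly. Collecting everything gives $\|v_h\|^2_{\supg,E} \lesssim \AsupgE(v_h, v_h)$ with a hidden constant depending only on $\alpha_*$. The main difficulty, as flagged, is the indefinite term $\Lstab$: it is exactly this term that forces the parameter restriction \eqref{eq:tauEstab}, and the delicate point is the bookkeeping that makes the prefactor collapse to $\sqrt{\tau_E\epsilon}$ so that only one half of each diagonal term is consumed.
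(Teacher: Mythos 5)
Your proposal is correct and follows essentially the same route as the paper: termwise expansion with $\bskewEh(v_h,v_h)=0$, absorption of the indefinite term $\Lstab$ via Cauchy--Schwarz, the inverse estimate of Lemma \ref{lm:inverse} and the restriction \eqref{eq:tauEstab} (your ordering of Young's inequality versus the inverse estimate is a cosmetic permutation of the paper's), and recovery of the full gradient from the fact that $\nabla \PN v_h \in [\Pk_{k-1}(E)]^2$. The only minor difference is in that last step, where the paper uses the best-approximation property \eqref{eq:PN-PZ0P} together with the $L^2$-orthogonality $\|\nabla v_h\|^2_{0,E} = \|\PZ0P \nabla v_h\|^2_{0,E} + \|(I-\PZ0P)\nabla v_h\|^2_{0,E}$, while you prove $\|\nabla \PN v_h\|_{0,E} \leq \|\PZ0P \nabla v_h\|_{0,E}$ and use a triangle inequality, losing only an irrelevant factor of $2$.
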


\begin{proof}
We simply consider all the terms in the sum \eqref{eq:AsupgE}.
For the first three terms by definitions \eqref{eq:aEh}, \eqref{eq:bskewEh} and \eqref{eq:bstab} 
and stability estimate \eqref{eq:sEh} we get
\begin{equation}
\label{eq:stab1}
\begin{aligned}
\epsilon \, a_h^E(v_h, v_h) & \geq \epsilon \, \|\PZ0P \nabla v_h\|^2_{0, E} + 
 \epsilon \, \alpha_* \, \|\nabla v_h - \nabla \PN v_h\|^2_{0, E}
\\
\bskewEh(v_h, v_h) &= 0
\\
\Bstab(v_h, v_h) & \geq \tau_E \, \|\bb \cdot \PZ0P \nabla v_h\|^2_{0, E} +
\tau_E \, \beta_E^2 \,  \alpha_* \, \|\nabla v_h - \nabla \PN v_h\|^2_{0, E}
\end{aligned}
\end{equation}
whereas for the last term we infer
\begin{equation}
\label{eq:stab2}
\begin{aligned}
\Lstab(v_h, v_h) &= \tau_E \int_E -\epsilon \, \dd \PZ0P \nabla v_h \,  \bb \cdot \PZ0P \nabla v_h \, {\rm d}E
\\
& \geq -\frac{1}{2}\tau_E \, \epsilon^2 \|\dd \PZ0P \nabla v_h\|^2_{0,E} 
-\frac{1}{2}\tau_E \, \|\bb \cdot \PZ0P \nabla v_h\|^2_{0, E}
\quad & \text{(Cauchy-Schwarz)}
\\
& \geq -\frac{1}{2}\tau_E \, \gamma_E \, h_E^{-2} \epsilon^2 \|\PZ0P \nabla v_h\|^2_{0,E} 
-\frac{1}{2}\tau_E \, \|\bb \cdot \PZ0P \nabla v_h\|^2_{0, E}
\quad & \text{(Lemma \ref{lm:inverse})}
\\
& \geq -\frac{1}{2} \epsilon \, \|\PZ0P \nabla v_h\|^2_{0,E} 
-\frac{1}{2}\tau_E \, \|\bb \cdot \PZ0P \nabla v_h\|^2_{0, E}
\quad & \text{(bound \eqref{eq:tauEstab})}
\end{aligned}
\end{equation}
Moreover, by definition of $L^2$-orthogonal projection \eqref{eq:P0_k^E}, being $\nabla \PN v_h \in [\Pk_{k-1}(E)]^2$, it holds
\begin{equation}
\label{eq:PN-PZ0P}
\|\nabla v_h - \nabla \PN v_h\|^2_{0, E} \geq 
\|\nabla v_h - \PZ0P \nabla v_h\|^2_{0, E} \,.
\end{equation}
Collecting the previous bound, \eqref{eq:stab1} and \eqref{eq:stab2} we obtain
\[
\begin{aligned}
\AsupgE(v_h, v_h) & \geq  
\frac{1}{2}  \epsilon  \, \|\PZ0P \nabla v_h\|^2_{0,E} +
\frac{1}{2}  \tau_E \,  \|\bb \cdot \PZ0P \nabla v_h\|^2_{0, E}   +
\\
& \qquad  
+  \alpha_* \, \epsilon \, \|\nabla v_h - \PZ0P \nabla v_h\|^2_{0, E}  + 
\alpha_* \, \tau_E \,  \beta_E^2 \, \|\nabla (I- \PN)  v_h\|^2_{0, E}
\\
& \geq  \min \left\{ \frac{1}{2}, \alpha_*\right\} \|v_h\|^2_{\supg, E} \,.
\end{aligned}
\]
\end{proof}

\begin{remark}
\label{rm:norm}
Notice that the norm $\|\cdot\|_{\supg, E}$ is slightly different from the usual norm introduced in standard SUPG
theory \cite{hughes:1982,franca:1992}, i.e.
\[
\|v_h\|^2_{\widetilde{\rm supg}, E} := 
\epsilon \, \|\nabla v_h\|^2_{0,E} + 
\tau_E \, \|\bb \cdot \nabla v_h\|^2_{0,E} \,.
\]
However we observe that the ``classical norm''  $\|\cdot\|^2_{\widetilde{\rm supg}, E}$ is controlled by the  ``VEM norm'' $\|\cdot\|_{\supg, E}$. Indeed, recalling \eqref{eq:PN-PZ0P}, for any $v_h \in H^1(E)$ it holds
\[
\begin{aligned}
\|\bb \cdot \nabla v_h\|^2_{0,E}  
& \leq 
2 \|\bb \cdot \PZ0P\nabla v_h\|^2_{0,E} + 
2 \beta_E^2 \| (I - \PZ0P) \nabla v_h\|^2_{0,E} 
\\
& \leq 
2 \|\bb \cdot \PZ0P\nabla v_h\|^2_{0,E} + 
2 \beta_E^2 \| \nabla(I - \P0) v_h\|^2_{0,E}
\,.
\end{aligned}
\]

\end{remark}

\subsection{Error estimates}
\label{sub:error}

The aim of the present section is to derive the rate of convergence for the proposed SUPG virtual element scheme \eqref{eq:supg-vem} in terms of the mesh size $h$, the SUPG parameter $\tau_E$, the diffusive coefficient $\epsilon$ and transport advective field $\bb$.
The hidden constants  may depend on  $\Omega$, on $k$, on the regularity constant appearing in the mesh assumption \textbf{(A1)} and on the stability constants $\alpha_*$ and $\alpha^*$ (cf. \eqref{eq:sEh}).

Let $u \in V$ and $u_h \in V_h(\Omega_h)$ be the solutions of problem \eqref{eq:supg} and problem \eqref{eq:supg-vem}, respectively, and let us define the following error functions
\[
\eint := u - \uint \,, \qquad
\epi := u - \Pi^{\nabla}_k u \,, \qquad
e_h := u_h - \uint \,,
\]
where $\uint \in V_h(\Omega_h)$ is the interpolant function of $u$ defined in Lemma \ref{lm:interpolation}, and $\Pi^{\nabla}_k u \in \Pk_k(\Omega_h)$ is the piecewise polynomial defined in \eqref{eq:proj-global}.
We introduce the analysis with the following abstract error estimation.

\begin{proposition}
\label{prp:abstract}
Let $u \in V$ and $u_h \in V_h(\Omega_h)$ be the solutions of problem \eqref{eq:supg} and problem \eqref{eq:supg-vem}, respectively.
Then under assumption \textbf{(A1)}  if the parameters $\tau_E$ satisfy \eqref{eq:tauEstab}, it holds that
\begin{equation}
\label{eq:abstract}
\|u - u_h\|^2_{\supg} \lesssim 
\|e_{\mathcal{I}}\|^2_{\supg} + \sum_{E \in \Omega_h} \bigl(
\errF + \erra + \errb + \errB + \errL
\bigr)
\end{equation}
where
\[
\begin{aligned}
\errF &:= \FtsupgE(e_h) - \FsupgE(e_h) \,,
\\
\erra &:=  \epsilon a^E(u, e_h) - \epsilon a_h^E(\uint, e_h) \,,
\\
\errb &:= \bskewE(u, e_h) - \bskewEh(\uint, e_h) \,,
\\
\errB &:=  \Btstab(u, e_h) - \Bstab(\uint, e_h) \,,
\\
\errL &:= \Ltstab(u, e_h) - \Lstab(\uint, e_h) \,.
\end{aligned}
\]
\end{proposition}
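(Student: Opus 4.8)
The plan is to follow the standard abstract VEM error framework, the crucial structural facts being that the global space is conforming, $V_h(\Omega_h) \subset V$, and that the SUPG formulation \eqref{eq:supg} is strongly consistent (the exact solution solves the strong equation, so $\Atsupg(u,\cdot)$ is well defined and $u$ solves \eqref{eq:supg} for \emph{all} $v \in V$). First I would split the error through the interpolant, writing $u - u_h = \eint - e_h$, and apply the triangle inequality to obtain $\|u - u_h\|^2_{\supg} \lesssim \|\eint\|^2_{\supg} + \|e_h\|^2_{\supg}$. The first summand is already the leading term in \eqref{eq:abstract}, so the whole task reduces to estimating $\|e_h\|^2_{\supg}$, where $e_h = u_h - \uint \in V_h(\Omega_h)$ is a genuine discrete function.

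Next I would invoke the coercivity bound of Proposition \ref{prp:coerciviity}, which is legitimate precisely because $e_h$ is discrete and because the hypothesis \eqref{eq:tauEstab} on $\tau_E$ is in force; summing the local inequalities over $E \in \Omega_h$ yields $\|e_h\|^2_{\supg} \lesssim \Asupg(e_h, e_h)$.

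The heart of the argument is then a Galerkin-type orthogonality. Writing $\Asupg(e_h, e_h) = \Asupg(u_h, e_h) - \Asupg(\uint, e_h)$ and using the discrete problem \eqref{eq:supg-vem} with test function $e_h$, the first term becomes $\Fsupg(e_h)$. Since $V_h(\Omega_h) \subset V$, I may take $v = e_h$ in the continuous stabilized problem \eqref{eq:supg}, which by strong consistency gives $\Atsupg(u, e_h) = \Ftsupg(e_h)$. Inserting this identity produces
\[
\Asupg(e_h, e_h) = \bigl[\Atsupg(u, e_h) - \Asupg(\uint, e_h)\bigr] + \bigl[\Fsupg(e_h) - \Ftsupg(e_h)\bigr].
\]

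Finally I would expand both bilinear forms into their local bricks according to \eqref{eq:AtsupgE} and \eqref{eq:AsupgE}, and split the forcing difference via \eqref{eq:Ftsupg}--\eqref{eq:Fsupg}. Matching the diffusion, skew-convection, $\Btstab$/$\Bstab$, and $\Ltstab$/$\Lstab$ contributions term by term reproduces exactly $\sum_E(\erra + \errb + \errB + \errL)$, while the bracket $[\Fsupg(e_h) - \Ftsupg(e_h)]$ equals $-\sum_E \errF$; passing to absolute values (harmless, since each of these quantities is estimated separately in the subsequent lemmas) delivers \eqref{eq:abstract}. I do not expect a genuine obstacle: this proposition is essentially a bookkeeping decomposition, and the only points demanding care are the conformity $V_h(\Omega_h)\subset V$ that legitimises testing \eqref{eq:supg} with $e_h$, the strong consistency making $\Atsupg(u,\cdot)$ meaningful, and keeping the signs straight when isolating the forcing term.
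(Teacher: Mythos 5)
Your proposal is correct and follows essentially the same route as the paper's own proof: coercivity from Proposition \ref{prp:coerciviity} applied to $e_h=u_h-\uint$, the identity $\Asupg(e_h,e_h)=\Fsupg(e_h)-\Ftsupg(e_h)+\Atsupg(u,e_h)-\Asupg(\uint,e_h)$ obtained via conformity and the strong consistency of \eqref{eq:supg} together with \eqref{eq:supg-vem}, localization into the $\eta$-terms, and the triangle inequality through $\eint$. You are in fact slightly more careful than the paper's compressed argument in noting that $\Fsupg(e_h)-\Ftsupg(e_h)=-\sum_E\errF$, so the bound holds once the signed quantities are replaced by their absolute values, which is harmless since the subsequent lemmas estimate them that way.
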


\begin{proof}
Simple computations yield
\[
\begin{aligned}
\|e_h\|^2_{\supg} & \lesssim 
\Asupg(e_h, e_h)
=  \Asupg(u_h - \uint, e_h)
\quad & \text{(Propostion \ref{prp:coerciviity})}
\\
& \lesssim \Fsupg(e_h) - \Ftsupg(e_h) + \Atsupg(u, e_h) - \Asupg(\uint, e_h)
\quad & \text{(using \eqref{eq:supg} and \eqref{eq:supg-vem})}
\\
& \lesssim
\sum_{E \in \Omega_h} \bigl(
\errF + \erra + \errb + \errB + \errL 
\bigr)
\,\, & \text{(def. \eqref{eq:Ftsupg}, \eqref{eq:Fsupg}, \eqref{eq:Atsupg}, \eqref{eq:Asupg})}
\end{aligned}
\]
The thesis now follows by the triangular inequality.
\end{proof}
The next step in the analysis consists in estimating all the terms in the bound \eqref{eq:abstract}.
We make the following assumption:

\smallskip\noindent
\textbf{(A2) Data assumption.} The solution $u$, the advective field $\bb$ and the load $f$ in \eqref{eq:supg} satisfy:
\[
\begin{aligned}
&u \in H^{\reg+1}(\Omega_h)\,, &
&f \in H^{\reg+1}(\Omega_h)\,, &
&\bb \in [W^{\reg+1}_{\infty}(\Omega_h)]^2\,, &
\end{aligned}
\]
for some $0 < \reg \leq k$.

\smallskip\noindent
Note that in the following lemmas it is not restrictive to assume $\beta_E > 0$ since $\beta_E=0$ implies $\bb |_E=0$ and thus the corresponding terms vanish.

\begin{lemma}[Estimate of $\|\eint\|_{\supg}$]
\label{lm:epi}
Under assumptions \textbf{(A1)} and \textbf{(A2)},
the term $\|\eint\|^2_{\supg}$ can be bounded as follows (for $0 < s \le k$)
\[
\|\eint\|^2_{\supg} \lesssim 
\sum_{E \in \Omega_h} \left( \epsilon +  \tau_E \beta_E^2 \right) h^{2\reg}_E |u|_{\reg+1,E}^2 \,.
\]
\end{lemma}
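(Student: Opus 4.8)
The plan is to expand the local SUPG norm of $\eint$ into its three defining contributions and to observe that each one is ultimately controlled by $\|\nabla \eint\|_{0,E}$, which Lemma~\ref{lm:interpolation} bounds by $h_E^{\reg}|u|_{\reg+1,E}$. Recalling the definition \eqref{eq:loc_nom_def}, I would write
\[
\|\eint\|^2_{\supg, E} =
\epsilon \, \|\nabla \eint\|^2_{0,E} +
\tau_E \, \|\bb \cdot \PZ0P \nabla \eint\|^2_{0,E} +
\tau_E \, \beta_E^2 \, \|\nabla(I - \PN ) \eint\|^2_{0,E}
\]
and treat the three summands in turn.

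For the diffusive term, Lemma~\ref{lm:interpolation} gives directly $\epsilon\|\nabla\eint\|^2_{0,E}\lesssim \epsilon\, h_E^{2\reg}|u|^2_{\reg+1,E}$. For the streamline term I would use the pointwise Cauchy--Schwarz bound $|\bb\cdot\ww|\le\beta_E|\ww|$ together with the $L^2$-contractivity of the projection $\PZ0P$, namely $\|\PZ0P\nabla\eint\|_{0,E}\le\|\nabla\eint\|_{0,E}$, to get $\|\bb\cdot\PZ0P\nabla\eint\|_{0,E}\le\beta_E\|\nabla\eint\|_{0,E}$; Lemma~\ref{lm:interpolation} then yields $\tau_E\|\bb\cdot\PZ0P\nabla\eint\|^2_{0,E}\lesssim\tau_E\beta_E^2 h_E^{2\reg}|u|^2_{\reg+1,E}$. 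For the last (projection-defect) term I would exploit that, by its very defining relation \eqref{eq:Pn_k^E}, $\nabla\PN$ is the $L^2$-orthogonal projection of the gradient onto $\nabla\Pk_k(E)$, hence $H^1$-seminorm stable: $\|\nabla\PN\eint\|_{0,E}\le\|\nabla\eint\|_{0,E}$. A triangle inequality then gives $\|\nabla(I-\PN)\eint\|_{0,E}\le 2\|\nabla\eint\|_{0,E}$, and invoking Lemma~\ref{lm:interpolation} once more produces $\tau_E\beta_E^2\|\nabla(I-\PN)\eint\|^2_{0,E}\lesssim\tau_E\beta_E^2 h_E^{2\reg}|u|^2_{\reg+1,E}$. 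Summing the three bounds over $E\in\Omega_h$ and absorbing the harmless factor of $2$ into the generic constant delivers the stated estimate.

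Since every step reduces to the single interpolation estimate of Lemma~\ref{lm:interpolation}, I do not expect a genuine obstacle here; the only points requiring a moment's care are recording the stability of the two projections (the $L^2$-contractivity of $\PZ0P$ and the $H^1$-seminorm stability of $\PN$), both of which follow immediately from their orthogonality defining relations, and noting the $\beta_E$ that is generated when the advective field is pulled out in the streamline term. The upshot is that all three components of $\|\eint\|^2_{\supg,E}$ inherit the optimal rate $h_E^{2\reg}$, weighted respectively by $\epsilon$, $\tau_E\beta_E^2$ and $\tau_E\beta_E^2$, which together give the factor $(\epsilon+\tau_E\beta_E^2)$ in the claimed bound.
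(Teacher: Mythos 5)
Your proposal is correct and follows essentially the same route as the paper: expand the local SUPG norm, control the streamline and projection-defect terms via the $L^2$-contractivity of $\PZ0P$ and the $H^1$-seminorm stability of $\PN$, and conclude with the interpolation estimate of Lemma~\ref{lm:interpolation}. The only cosmetic difference is that the orthogonality in \eqref{eq:Pn_k^E} gives $\|\nabla(I-\PN)\eint\|_{0,E}\le\|\nabla\eint\|_{0,E}$ directly by Pythagoras, so your triangle-inequality factor of $2$ is unnecessary (though harmless).
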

\begin{proof}
Applying the definition of the norm $\|\cdot\|_{\supg,E}$, of the $L^2$-orthogonal projection $\PZ0P$ (cf. \eqref{eq:P0_k^E}), of the $H^1$-orthogonal projection $\PN$ (cf. \eqref{eq:Pn_k^E}), and the interpolation estimate of Lemma \ref{lm:interpolation}, we easily obtain
\[
\begin{aligned}
\|\eint\|^2_{\supg, E} &= 
\epsilon \|\nabla \eint\|^2_{0,E} + \tau_E \|\bb \cdot \PZ0P \nabla \eint\|^2_{0,E} +
\tau_E \beta_E^2 \|\nabla(I - \PN)\eint\|^2_{0,E}
\\
& \leq
\epsilon \|\nabla \eint\|^2_{0,E} + \tau_E  \beta_E^2 \|\nabla \eint\|^2_{0,E} +
\tau_E \beta_E^2 \|\nabla \eint\|^2_{0,E}
\lesssim \left( \epsilon +  \tau_E \beta_E^2 \right) \|\nabla \eint\|_{0,E}^2 
\\
& \lesssim 
\left( \epsilon +  \tau_E \beta_E^2 \right) h^{2\reg}_E |u|_{\reg+1,E}^2 \,.
\end{aligned}
\]
The thesis now follows by summing the local contributions.
\end{proof}

\begin{lemma}[Estimate of $\errF(e_h)$]
\label{lm:errF}
Under the assumptions \textbf{(A1)} and \textbf{(A2)},
the term $\errF$ can be bounded as follows (for $0 < s \le k$)
\[
\errF  \lesssim
\left( \lambda_E h_E^{\reg+2}  |f|_{\reg+1, E} +
\tau_E^ {1/2}  \frac{\|\bb\|_{[W^{\reg}_{\infty}(E)]^2}}{\beta_E} h_E^{\reg} \|f\|_{\reg,E}
\right) \|e_h\|_{\supg,E} 
\]
where for any $E \in \Omega_h$
\[
\lambda_E := \min \left\{ \frac{1}{\beta_E\tau^{1/2}_E}, \frac{1}{\epsilon^{1/2}} \right\} \,.
\]
\end{lemma}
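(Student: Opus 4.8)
The plan is to split $\errF$ into the two contributions coming from the two terms of the right-hand side, namely $\errF = \errFA + \errFB$ with
\[
\errFA := \int_E f\,(I-\P0)e_h\,{\rm d}E,
\qquad
\errFB := \tau_E \int_E f\,\bb\cdot(I-\PZ0P)\nabla e_h\,{\rm d}E,
\]
obtained by subtracting \eqref{eq:FsupgE} from \eqref{eq:FtsupgE} with $v=e_h$. Each piece I would estimate by an orthogonality argument (inserting a suitable polynomial to exploit the projection), Cauchy--Schwarz, and the Bramble--Hilbert Lemma \ref{lm:bramble}, recovering the $\|e_h\|_{\supg,E}$-dependence from the definition \eqref{eq:loc_nom_def} of the norm.

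For $\errFA$ I would use that $(I-\P0)e_h$ is $L^2(E)$-orthogonal to $\Pk_k(E)$ to write $\errFA = \int_E (f - \Pi^{0,E}_\reg f)(I-\P0)e_h\,{\rm d}E$, so that Cauchy--Schwarz and Lemma \ref{lm:bramble} give $|\errFA| \lesssim h_E^{\reg+1}|f|_{\reg+1,E}\,\|(I-\P0)e_h\|_{0,E}$. The key step is then to prove $\|(I-\P0)e_h\|_{0,E} \lesssim h_E\,\lambda_E\,\|e_h\|_{\supg,E}$, which yields the factor $\lambda_E h_E^{\reg+2}|f|_{\reg+1,E}$. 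This requires two independent bounds. On one hand, since $\P0$ is the $L^2$-best approximation in $\Pk_k(E)$, Bramble--Hilbert gives $\|(I-\P0)e_h\|_{0,E} \lesssim h_E\|\nabla e_h\|_{0,E} \le h_E\,\epsilon^{-1/2}\|e_h\|_{\supg,E}$. On the other hand, again by $L^2$-optimality, $\|(I-\P0)e_h\|_{0,E} \le \|(I-\PN)e_h\|_{0,E}$ (as $\PN e_h \in \Pk_k(E)$), and a Poincar\'e inequality for the boundary-mean-free function $(I-\PN)e_h$ (recall the second condition in \eqref{eq:Pn_k^E}) gives $\|(I-\PN)e_h\|_{0,E} \lesssim h_E\|\nabla(I-\PN)e_h\|_{0,E} \le h_E\,(\beta_E\tau_E^{1/2})^{-1}\|e_h\|_{\supg,E}$. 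Taking the smaller of the two constants reproduces exactly $\lambda_E$.

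For $\errFB$ I would exploit that $(I-\PZ0P)\nabla e_h$ is $L^2(E)$-orthogonal to $[\Pk_{k-1}(E)]^2$, subtracting a polynomial approximation $\ppsi \in [\Pk_{\reg-1}(E)]^2 \subseteq [\Pk_{k-1}(E)]^2$ of the vector field $f\bb$. Cauchy--Schwarz then gives $|\errFB| \le \tau_E\,\|f\bb - \ppsi\|_{0,E}\,\|(I-\PZ0P)\nabla e_h\|_{0,E}$. For the first factor I would combine Bramble--Hilbert with the Leibniz rule to get $\|f\bb-\ppsi\|_{0,E} \lesssim h_E^{\reg}\,\|\bb\|_{[W^\reg_\infty(E)]^2}\,\|f\|_{\reg,E}$; for the second, inequality \eqref{eq:PN-PZ0P} gives $\|(I-\PZ0P)\nabla e_h\|_{0,E} \le \|\nabla(I-\PN)e_h\|_{0,E} \le (\beta_E\tau_E^{1/2})^{-1}\|e_h\|_{\supg,E}$. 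Multiplying, the $\tau_E$-powers combine to $\tau_E^{1/2}$, yielding the second term of the stated bound.

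The main obstacle is the first piece: obtaining the $\min$ in $\lambda_E$ rather than merely the $\epsilon^{-1/2}$ factor. The non-obvious ingredient is the chain $\|(I-\P0)e_h\|_{0,E} \le \|(I-\PN)e_h\|_{0,E} \lesssim h_E\|\nabla(I-\PN)e_h\|_{0,E}$, which trades the $L^2$ projection for the $H^1$-seminorm projection (legitimate by $L^2$-optimality of $\P0$) and then invokes the Poincar\'e inequality available because $(I-\PN)e_h$ has vanishing boundary mean. Everything else reduces to routine polynomial approximation under assumption \textbf{(A1)}.
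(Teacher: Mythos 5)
Your proposal is correct and follows essentially the same route as the paper's proof: the same splitting $\errF = \errFA + \errFB$, orthogonality of the $L^2$-projections plus Bramble--Hilbert for the $f$-factors, and the two complementary bounds on the $e_h$-factor (the $\epsilon^{-1/2}$ branch and, via the Poincar\'e inequality for the boundary-mean-free function $(I-\PN)e_h$, the $(\beta_E\tau_E^{1/2})^{-1}$ branch) that together produce $\lambda_E$. The only cosmetic difference is that you pass from $\|(I-\P0)e_h\|_{0,E}$ to $\|(I-\PN)e_h\|_{0,E}$ by $L^2$-optimality of $\P0$, whereas the paper replaces the test factor by $(I-\PN)e_h$ directly, using that $(I-\P0)f$ is orthogonal to $\Pk_k(E)$; the two manipulations are equivalent.
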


\begin{proof}
Applying the definitions \eqref{eq:FtsupgE}, \eqref{eq:FsupgE} and the definition of $L^2$-orthogonal projection we obtain
\begin{equation}
\label{eq:errF}
\begin{aligned}
\errF & =
\FtsupgE(e_h) - \FsupgE(e_h)  \\
&=  
\bigl(f, \, e_h - \P0 e_h \bigr)_{0,E} + 
\tau_E \bigl(f, \, \bb \cdot (\nabla e_h -\PZ0P \nabla e_h) \bigr)_{0,E}
\qquad 
\\
& = 
\bigl((I-  \P0) f, \, (I - \P0) e_h \bigr)_{0,E} + 
\tau_E \bigl(f \bb, \,  (I -\PZ0P) \nabla e_h \bigr)_{0,E} 
\\
& = 
\bigl((I-  \P0) f, \, (I - \PN) e_h \bigr)_{0,E} + 
\tau_E \bigl((I - \PZ0P) f \bb, \,  (I -\PZ0P) \nabla e_h \bigr)_{0,E} 
\\
& =: 
\errFA + \errFB \,.
\end{aligned}
\end{equation}
Using a scaled Poincar\'e inequality  we infer
\[
\begin{aligned}
\errFA & \leq 
\|(I-  \P0) f \|_{0,E} \|(I - \PN) e_h \|_{0,E} 
\lesssim h_E\|(I-  \P0) f \|_{0,E} \|\nabla (I - \PN) e_h \|_{0,E}.
\end{aligned}
\]
Recalling the definition of the norm $\|\cdot\|_{\supg, E}$ and the stability of $\PN$ with respect to the $H^1$-seminorm, from Lemma \ref{lm:bramble} we get
\begin{equation}
\label{eq:errFA}
\errFA \lesssim \min \left \{\frac{1}{\beta_E \tau_E^{1/2}}, \frac{1}{\epsilon^{1/2}}\right\}  h_E^{\reg+2} |f|_{\reg+1,E} \|e_h\|_{\supg, E} \,.
\end{equation}
Regarding the second term $\errFB$, from \eqref{eq:PN-PZ0P} and Lemma \ref{lm:bramble} we obtain
\begin{equation}
\label{eq:errFB}
\begin{aligned}
\errFB
& \leq 
\tau_E \|(I - \PZ0P) f \bb \|_{0,E}  \|(I -\PZ0P) \nabla e_h \|_{0,E}  
\lesssim 
\tau_E^{1/2} \frac{\|(I - \PZ0P) f \bb\|_{0,E}}{\beta_E}  \|e_h\|_{\supg,E} 
\\
& \lesssim 
\tau_E^{1/2} h_E^{\reg} \frac{|f \bb|_{\reg,E}}{\beta_E}  \|e_h\|_{\supg,E} 
\lesssim
\tau_E^{1/2} \frac{\|\bb\|_{[W^{\reg}_{\infty}(E)]^2}}{\beta_E}
h_E^{\reg} \|f\|_{\reg,E}  \|e_h\|_{\supg,E} 
\,.
\end{aligned}
\end{equation}
Now the thesis follows from \eqref{eq:errF}, \eqref{eq:errFA} and \eqref{eq:errFB}.
\end{proof}

\begin{remark}
The term $\| \bb \|_{[W^{\reg}_{\infty}(E)]^2} / {\beta_E} = \| \bb / \beta_E \|_{[W^{\reg}_{\infty}(E)]^2}$ represents a locally scaled regularity term for $\bb$. Roughly speaking, it is related to the local variations of $\bb$ and not to its amplitude.
\end{remark}

\begin{lemma}[Estimate of $\erra$]
\label{lm:errA}
Under the assumptions \textbf{(A1)} and \textbf{(A2)},
the term $\erra$ can be bounded as follows (for $0 < s \le k$)
\[
\erra \lesssim
\epsilon^{1/2} h_E^{\reg} |u|_{\reg+1,E}
\|e_h\|_{\supg, E}\,.
\]
\end{lemma}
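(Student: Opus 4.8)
The plan is to split $\erra$ along the two pieces of $a_h^E(\cdot,\cdot)$ in \eqref{eq:aEh}: the ``consistency'' piece built from the projected gradients, and the stabilization piece $\Stab(\cdot,\cdot)$. Writing
\[
\erra = \epsilon\Big(a^E(u,e_h) - \int_E\PZ0P\nabla\uint\cdot\PZ0P\nabla e_h\,{\rm d}E\Big) - \epsilon\,\Stab((I-\PN)\uint,(I-\PN)e_h),
\]
I would estimate the two summands separately, the common mechanism being that every surviving quantity is either a best-approximation remainder or an interpolation error of optimal order $h_E^{\reg}$, paired with an $e_h$-factor that the $\supg$-norm controls up to a factor $\epsilon^{-1/2}$.

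For the first summand I would insert the term $\int_E\PZ0P\nabla u\cdot\PZ0P\nabla e_h\,{\rm d}E$ and invoke the defining orthogonality \eqref{eq:P0_k^E} of $\PZ0P=\boldsymbol{\Pi}^{0,E}_{k-1}$ twice: since $\PZ0P\nabla u\in[\Pk_{k-1}(E)]^2$ the projection may be dropped from the factor $\nabla e_h$, and since $(I-\PZ0P)\nabla u$ is $L^2$-orthogonal to $[\Pk_{k-1}(E)]^2$ the projected part of $\nabla e_h$ disappears. This yields the clean decomposition
\[
a^E(u,e_h) - \int_E\PZ0P\nabla\uint\cdot\PZ0P\nabla e_h\,{\rm d}E = \int_E(I-\PZ0P)\nabla u\cdot(I-\PZ0P)\nabla e_h\,{\rm d}E + \int_E\PZ0P\nabla\eint\cdot\PZ0P\nabla e_h\,{\rm d}E ,
\]
with $\eint = u-\uint$. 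Cauchy--Schwarz together with the $L^2$-stability of $\PZ0P$ then bounds both integrals: the third estimate of Lemma \ref{lm:bramble} gives $\|(I-\PZ0P)\nabla u\|_{0,E}\lesssim h_E^{\reg}|u|_{\reg+1,E}$, while Lemma \ref{lm:interpolation} gives $\|\PZ0P\nabla\eint\|_{0,E}\le\|\nabla\eint\|_{0,E}\lesssim h_E^{\reg}|u|_{\reg+1,E}$. For the $e_h$-factors I would use $\|(I-\PZ0P)\nabla e_h\|_{0,E}\le\|\nabla e_h\|_{0,E}$ and the norm definition \eqref{eq:loc_nom_def}, which gives $\|\nabla e_h\|_{0,E}\le\epsilon^{-1/2}\|e_h\|_{\supg,E}$; multiplying by the outer $\epsilon$ the two $\epsilon$-powers combine to the desired $\epsilon^{1/2}$.

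For the stabilization summand I would apply Cauchy--Schwarz to the symmetric positive semidefinite form $\Stab(\cdot,\cdot)$ followed by its upper bound in \eqref{eq:sEh}, reducing it to a product of $\|\nabla(I-\PN)\uint\|_{0,E}$ and $\|\nabla(I-\PN)e_h\|_{0,E}$. The first factor is controlled by writing $\uint = u-\eint$, using the $H^1$-seminorm stability of $\PN$, the second estimate of Lemma \ref{lm:bramble} (so that $\|\nabla(I-\PN)u\|_{0,E}\lesssim h_E^{\reg}|u|_{\reg+1,E}$) and Lemma \ref{lm:interpolation}; the second factor is again bounded by $\epsilon^{-1/2}\|e_h\|_{\supg,E}$. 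Collecting the three contributions yields the stated bound.

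I expect the only delicate point to be the first summand, namely exploiting the polynomial consistency of the projected form correctly, so that after inserting $\PZ0P\nabla u$ the only surviving terms involve the $(I-\PZ0P)$ remainder of $\nabla u$ and the interpolation error $\nabla\eint$, each of optimal order $h_E^{\reg}$. The rest is routine bookkeeping with the norm \eqref{eq:loc_nom_def}, the main thing to watch being that the $\epsilon$ exponent lands exactly at $1/2$ and never becomes negative.
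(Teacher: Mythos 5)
Your proof is correct, and it takes a genuinely different (though closely related) route from the paper's. The paper argues via polynomial consistency: it inserts $\PN u$ and writes $\erra = \epsilon\, a^E(u-\PN u, e_h) + \epsilon\, a_h^E(\PN u - \uint, e_h)$, which is legitimate because $a_h^E(p_k,\cdot) = a^E(p_k,\cdot)$ for every $p_k \in \Pk_k(E)$ (the gradient of $p_k$ equals its own $\PZ0P$-projection, and the stabilization vanishes since $(I-\PN)p_k = 0$); it then concludes by the continuity of both forms (with constant $1+\alpha^*$), the triangle inequality $\|\nabla(\PN u - \uint)\|_{0,E} \le \|\nabla\epi\|_{0,E} + \|\nabla\eint\|_{0,E}$, Lemmas \ref{lm:bramble} and \ref{lm:interpolation}, and $\|\nabla e_h\|_{0,E} \le \epsilon^{-1/2}\|e_h\|_{\supg,E}$. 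You never introduce $\PN u$: you expand $a_h^E$ by its definition \eqref{eq:aEh} and use a Pythagoras-type splitting of $a^E(u,e_h)$ through the $L^2$-orthogonality \eqref{eq:P0_k^E}, so the consistency error surfaces as $(I-\PZ0P)\nabla u$, estimated by the third bound of Lemma \ref{lm:bramble}, instead of as $\nabla\epi$; your decomposition identity is correct (both cross terms vanish by orthogonality), and your handling of the stabilization piece is sound. One small caveat: your Cauchy--Schwarz step for $\Stab(\cdot,\cdot)$ presumes its symmetry and positive semidefiniteness, which \eqref{eq:sEh} does not state explicitly, but this holds for the standard choices (\texttt{dofi-dofi}, \texttt{D-recipe}) and is equally implicit in the paper's continuity bound for $a_h^E$, so it is not a gap relative to the paper. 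What the paper's argument buys is modularity -- it uses only consistency and continuity of $a_h^E$, so it survives any change of stabilization or projection in \eqref{eq:aEh}; what yours buys is an explicit view of the cancellation mechanism and a proof that bypasses $\epi$ altogether. Both land on the same bound $\epsilon^{1/2} h_E^{\reg}|u|_{\reg+1,E}\|e_h\|_{\supg,E}$ with the same parameter dependence.
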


\begin{proof}
The consistency and the continuity of the form $a_h^E(\cdot, \cdot)$,
Lemma \ref{lm:bramble} and Lemma \ref{lm:interpolation}
easily imply
\[
\begin{aligned}
\erra &=
\epsilon a^E(u, e_h) -  \epsilon a_h^E(\uint, e_h)  = 
\epsilon a^E(u - \PN u, e_h)  +  \epsilon a_h^E(\PN u - \uint, e_h)
\\
& \leq
\epsilon \bigl( \|\nabla \epi\|_{0,E} + (1 + \alpha^*) \|\nabla(\PN u - \uint)\|_{0,E} \bigr) 
\|\nabla e_h\|_{0,E}
\\
& \lesssim
\epsilon \bigl( \|\nabla \eint\|_{0,E} +  \|\nabla \epi\|_{0,E} \bigr) 
\|\nabla e_h\|_{0,E}
 \lesssim
\epsilon^{1/2} 
h_E^{\reg} |u|_{\reg+1,E}
\|e_h\|_{\supg,E} \,.
\end{aligned}
\]
\end{proof}

\begin{lemma}[Estimate of $\errB$]
\label{lm:errB}
Under the assumptions \textbf{(A1)} and \textbf{(A2)},
the term $\errB$ can be bounded as follows (for $0 < s \le k$)
\[
\errB \lesssim
\tau_E^{1/2} \beta_E \frac{\|\bb\|^2_{[W^{\reg}_{\infty}(E)]^2}}{\beta^2_E}
 h_E^{\reg} \|u\|_{\reg+1,E}
\|e_h\|_{\supg,E} \,.
\]
\end{lemma}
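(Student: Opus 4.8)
The plan is to insert the definitions \eqref{eq:BtstabE} and \eqref{eq:bstab} into $\errB=\Btstab(u,e_h)-\Bstab(\uint,e_h)$, and to split the test gradient as $\bb\cdot\nabla e_h=\bb\cdot\PZ0P\nabla e_h+\bb\cdot(I-\PZ0P)\nabla e_h$ in order to align the continuous term with the discrete integral. This produces three contributions, $\errB=T_1+T_2-T_3$, with $T_1:=\tau_E\bigl(\bb\cdot(\nabla u-\PZ0P\nabla\uint),\,\bb\cdot\PZ0P\nabla e_h\bigr)_{0,E}$, $T_2:=\tau_E\bigl((\bb\cdot\nabla u)\,\bb,\,(I-\PZ0P)\nabla e_h\bigr)_{0,E}$ and $T_3:=\tau_E\beta_E^2\,\Stab\bigl((I-\PN)\uint,(I-\PN)e_h\bigr)$. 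In each piece the factor carrying $e_h$ will be matched to one of the three contributions of $\|e_h\|_{\supg,E}$ (cf. \eqref{eq:loc_nom_def}), resorting to \eqref{eq:PN-PZ0P} to pass from $\PZ0P$ to $\PN$ whenever needed.

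Terms $T_1$ and $T_3$ are the routine ones. For $T_1$, Cauchy--Schwarz together with $\tau_E^{1/2}\|\bb\cdot\PZ0P\nabla e_h\|_{0,E}\le\|e_h\|_{\supg,E}$ gives $|T_1|\le\tau_E^{1/2}\beta_E\|\nabla u-\PZ0P\nabla\uint\|_{0,E}\|e_h\|_{\supg,E}$; the triangle inequality with the third estimate of Lemma \ref{lm:bramble} and the interpolation bound of Lemma \ref{lm:interpolation} then yield $\|\nabla u-\PZ0P\nabla\uint\|_{0,E}\lesssim h_E^\reg|u|_{\reg+1,E}$. For $T_3$, the Cauchy--Schwarz inequality for the symmetric positive semidefinite form $\Stab$ and the upper bound in \eqref{eq:sEh} give $|T_3|\le\alpha^*\tau_E^{1/2}\beta_E\|\nabla(I-\PN)\uint\|_{0,E}\|e_h\|_{\supg,E}$, where the $e_h$ factor has been matched to the third term of the norm; writing $(I-\PN)\uint=(I-\PN)u-(I-\PN)\eint$ and using the second estimate of Lemma \ref{lm:bramble} and Lemma \ref{lm:interpolation} one obtains $\|\nabla(I-\PN)\uint\|_{0,E}\lesssim h_E^\reg|u|_{\reg+1,E}$. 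Hence $|T_1|+|T_3|\lesssim\tau_E^{1/2}\beta_E h_E^\reg|u|_{\reg+1,E}\|e_h\|_{\supg,E}$, which is dominated by the claimed bound because $\|\bb\|_{[W^\reg_\infty(E)]^2}\ge\beta_E$.

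The core of the argument is $T_2$, and it is precisely here that the projection disrupts the SUPG structure. Since $(I-\PZ0P)\nabla e_h$ is $L^2(E)$-orthogonal to $[\Pk_{k-1}(E)]^2$, I would introduce the vector field $\ww:=(\bb\cdot\nabla u)\,\bb$ and subtract its projection for free, so that $T_2=\tau_E\bigl((I-\PZ0P)\ww,\,(I-\PZ0P)\nabla e_h\bigr)_{0,E}$. Cauchy--Schwarz, the first estimate of Lemma \ref{lm:bramble} in the form $\|(I-\PZ0P)\ww\|_{0,E}\lesssim h_E^\reg|\ww|_{\reg,E}$, and the Sobolev product rule $|\ww|_{\reg,E}\lesssim\|\bb\|^2_{[W^\reg_\infty(E)]^2}\|u\|_{\reg+1,E}$ control the trial factor; for the test factor, \eqref{eq:PN-PZ0P} gives $\|(I-\PZ0P)\nabla e_h\|_{0,E}\le\|\nabla(I-\PN)e_h\|_{0,E}\le(\tau_E^{1/2}\beta_E)^{-1}\|e_h\|_{\supg,E}$. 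Collecting the powers of $\tau_E$ produces $|T_2|\lesssim\tau_E^{1/2}\beta_E\,\frac{\|\bb\|^2_{[W^\reg_\infty(E)]^2}}{\beta_E^2}\,h_E^\reg\|u\|_{\reg+1,E}\|e_h\|_{\supg,E}$, i.e. exactly the asserted estimate, which dominates $T_1$ and $T_3$; summing the three pieces then closes the proof.

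The main obstacle is this term $T_2$: a naive estimate of $(\bb\cdot\nabla u)(\bb\cdot\nabla e_h)$ would not reveal any gain in $h_E$, and only the orthogonality of $(I-\PZ0P)\nabla e_h$ recovers the factor $h_E^\reg$, while the resulting remainder can be absorbed solely through the stabilization part of the $\supg$-norm via \eqref{eq:PN-PZ0P}. This trade-off is exactly what forces the scaled convection-regularity factor $\|\bb\|^2_{[W^\reg_\infty(E)]^2}/\beta_E^2$ to appear, and it embodies the difficulty, flagged in the Introduction, of a variable advective field interacting with the VEM projection operators.
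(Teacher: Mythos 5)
Your proposal is correct and follows essentially the same route as the paper's proof: your splitting $\errB = T_1 + T_2 - T_3$ is exactly the paper's decomposition into $\errBA + \errBB + \errBC$ in \eqref{eq:errB}, and your key treatment of $T_2$ --- inserting the projection $\PZ0P$ on $\bb\,\bb^{\rm T}\nabla u$ for free via orthogonality, then absorbing $\|(I-\PZ0P)\nabla e_h\|_{0,E}$ through \eqref{eq:PN-PZ0P} into the stabilization part of the $\supg$-norm --- is precisely the paper's estimate \eqref{eq:errBB}. Your bounds for $T_1$ and $T_3$ likewise coincide with \eqref{eq:errBA} and \eqref{eq:errBC}, including the final observation that these terms are dominated by the stated bound since $\beta_E \le \|\bb\|_{[W^{\reg}_{\infty}(E)]^2}$.
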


\begin{proof}
Using the definition of $L^2$-projection, simple computations yield
\begin{equation}
\label{eq:errB}
\begin{aligned}
\errB =&
\tau_E \bigl(\bb \cdot \nabla u, \bb \cdot \nabla e_h \bigr)_{0,E} - 
\tau_E  \bigl(\bb \cdot \PZ0P \nabla \uint, \bb \cdot \PZ0P \nabla  e_h\bigr)_{0,E} 
\\
& - \tau_E \beta_E^2 \Stab((I - \PN)\uint,(I - \PN)e_h)
\\
 = & 
\tau_E  \bigl(\bb \cdot \nabla u - \bb \PZ0P \nabla \uint, \bb \cdot \PZ0P \nabla e_h\bigr)_{0,E} +  
\tau_E  \bigl(\bb \cdot  \nabla u, \bb \cdot (I -  \PZ0P) \nabla  e_h \bigr)_{0,E} 
\\
& - \tau_E \beta_E^2 \Stab((I - \PN)\uint,(I - \PN)e_h)
\\
 = & 
\tau_E  \bigl(\bb \cdot \nabla u - \bb \PZ0P \nabla \uint, \bb \cdot \PZ0P \nabla e_h\bigr)_{0,E} 
\\
& +  
\tau_E  \bigl( (I- \PZ0P) \bb \bb^{\rm T}  \nabla u,  (I -  \PZ0P) \nabla  e_h \bigr)_{0,E} 
\\
& - \tau_E \beta_E^2 \Stab((I - \PN)\uint,(I - \PN)e_h)
\\
 =:& \errBA + \errBB + \errBC \,.
\end{aligned}
\end{equation}
We analyse separately each term in the sum.
The term $\errBA$ is bounded using \eqref{eq:PN-PZ0P} and the continuity of $\PZ0P$ with respect to the $L^2$-norm,
Lemma \ref{lm:bramble} and Lemma \ref{lm:interpolation}:
\begin{equation}
\label{eq:errBA}
\begin{aligned}
\errBA &\leq 
\tau_E \|\bb \cdot \nabla u - \bb \cdot\PZ0P \nabla \uint\|_{0,E} \|\bb \cdot \PZ0P \nabla e_h\|_{0,E}
\\
&\leq \tau_E^{1/2} \beta_E \|\nabla u -  \PZ0P \nabla \uint\|_{0,E} \|e_h\|_{\supg, E}
\\
&\leq \tau_E^{1/2} \beta_E \bigl( \|(I -  \PZ0P) \nabla u\|_{0,E} +
\|\PZ0P \nabla (u - \uint)\|_{0,E}  \bigr)
\|e_h\|_{\supg, E} 
\\
&\leq \tau_E^{1/2} \beta_E \bigl( \|\nabla \epi\|_{0,E} +
\|\nabla \eint\|_{0,E}  \bigr)
\|e_h\|_{\supg, E} 
\\
&\lesssim \tau_E^{1/2} \beta_E h_E^{\reg} |u|_{\reg+1,E}
\|e_h\|_{\supg, E} \, .
\end{aligned}
\end{equation}
For the second term $\errBB$ using again \eqref{eq:PN-PZ0P} and Lemma \ref{lm:bramble} we infer
\begin{equation}
\label{eq:errBB}
\begin{aligned}
\errBB &\leq
\tau_E \|(I - \PZ0P) \bb \bb^{\rm T}  \nabla u\|_{0,E} 
\|(I -  \PZ0P) \nabla  e_h \|_{0,E}
\\
& \leq
\tau_E^{1/2} \beta_E \frac{\|(I - \PZ0P) \bb \bb^{\rm T}  \nabla u\|_{0,E} }{\beta^2_E}
\|e_h\|_{\supg, E} 
\\
&\lesssim
\tau_E^{1/2} \beta_E h_E^{\reg} \frac{|\bb \bb^{\rm T}  \nabla u|_{\reg,E} }{\beta^2_E}
\|e_h\|_{\supg, E} 
\lesssim
\tau_E^{1/2} \beta_E \frac{\|\bb\|^2_{[W^{\reg}_{\infty}(E)]^2} }{\beta^2_E}
 h_E^{\reg} \| u\|_{\reg+1,E} 
\|e_h\|_{\supg, E} 
\,.
\end{aligned}
\end{equation}
Finally for the last term in \eqref{eq:errB}, employing \eqref{eq:sEh}, the stability of the $H^1$-seminorm projection with respect to the $H^1$-seminorm, Lemma \ref{lm:bramble} and Lemma \ref{lm:interpolation} we get
\begin{equation}
\label{eq:errBC}
\begin{aligned}
\errBC & = -
\tau_E \beta_E^2 \Stab((I - \PN)\uint,(I - \PN)e_h)
\\
& \leq
\alpha^* \tau_E \beta_E^2 \|\nabla (I - \PN) \uint \|_{0,E} \|\nabla (I - \PN) e_h\|_{0,E}
\\
& \leq
\alpha^* \tau_E^{1/2} \beta_E \bigl( \|\nabla \eint\|_{0,E} + 
\|\nabla \epi\|_{0,E} \bigr) \|e_h\|_{\supg, E} 
\\
& \lesssim \tau_E^{1/2} \beta_E h_E^{\reg} |u|_{\reg+1,E} \|e_h\|_{\supg, E}
\,.
\end{aligned}
\end{equation}
The thesis now follows by collecting \eqref{eq:errBA}, \eqref{eq:errBB} and \eqref{eq:errBC} in \eqref{eq:errB}.
\end{proof}

\begin{lemma}[Estimate of $\errL$]
\label{lm:errL}
Under the assumptions \textbf{(A1)} and \textbf{(A2)},
the term $\errL$ can be bounded as follows (for $0 < s \le k$)
\[
\errL  \lesssim
\tau_E^{1/2} \epsilon 
\frac{\|\bb \|_{[W^{\max{ \{\reg-1,0 \}}}_{\infty}(E)]^2}}{\beta_E}
h_E^{\max{ \{\reg-1,0 \}} } \|u\|_{\reg+1,E} \|e_h\|_{\supg, E} \,.
\]
\end{lemma}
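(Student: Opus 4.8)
The plan is to mimic the decomposition strategy already used for $\errF$ and $\errB$. First I would recall that $\Delta u = \dd \nabla u$ and rewrite $\errL$ from definitions \eqref{eq:LtstabE} and \eqref{eq:lstab}, then split it by inserting the projected gradient $\PZ0P \nabla e_h$ in the test slot:
\[
\errL = -\tau_E \epsilon \Bigl[ \bigl(\Delta u, \, \bb \cdot (I - \PZ0P)\nabla e_h\bigr)_{0,E} + \bigl(\Delta u - \dd \PZ0P \nabla \uint, \, \bb \cdot \PZ0P \nabla e_h\bigr)_{0,E}\Bigr] =: -\tau_E \epsilon (T_1 + T_2).
\]
Here $T_1$ carries the error coming from projecting the test gradient, while $T_2$ carries the consistency error of the discrete streamline-diffusion term. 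I would then bound the two pieces separately.

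For $T_1$, since $(I - \PZ0P)\nabla e_h$ is $L^2(E)$-orthogonal to $[\Pk_{k-1}(E)]^2$, I would insert $(I - \PZ0P)$ on the first factor exactly as done for $\errFB$ and $\errBB$, obtaining
\[
T_1 = \bigl((I - \PZ0P)((\Delta u)\bb), \, (I - \PZ0P)\nabla e_h\bigr)_{0,E}.
\]
A Cauchy--Schwarz inequality, the Bramble--Hilbert estimate of Lemma \ref{lm:bramble} applied to the vector field $(\Delta u)\bb$ with $s=\max\{\reg-1,0\}$ (using the Leibniz rule to bound $|(\Delta u)\bb|_{\max\{\reg-1,0\},E}$ by $\|\bb\|_{[W^{\max\{\reg-1,0\}}_{\infty}(E)]^2}\|u\|_{\reg+1,E}$), together with $\|(I - \PZ0P)\nabla e_h\|_{0,E} \le \|\nabla(I-\PN)e_h\|_{0,E} \le (\tau_E^{1/2}\beta_E)^{-1}\|e_h\|_{\supg,E}$ from \eqref{eq:PN-PZ0P} and the norm definition \eqref{eq:loc_nom_def}, then yield precisely
\[
\tau_E\epsilon\,|T_1| \lesssim \tau_E^{1/2}\epsilon\,\frac{\|\bb\|_{[W^{\max\{\reg-1,0\}}_{\infty}(E)]^2}}{\beta_E}\,h_E^{\max\{\reg-1,0\}}\|u\|_{\reg+1,E}\|e_h\|_{\supg,E}.
\]

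For $T_2$ I would control $\|\bb \cdot \PZ0P \nabla e_h\|_{0,E} \le \tau_E^{-1/2}\|e_h\|_{\supg,E}$ directly from \eqref{eq:loc_nom_def}, so that it only remains to estimate $\|\Delta u - \dd \PZ0P \nabla \uint\|_{0,E} = \|\dd(\nabla u - \PZ0P \nabla \uint)\|_{0,E}$. Writing $\nabla u - \PZ0P \nabla \uint = (I - \PZ0P)\nabla u + \PZ0P \nabla \eint$, I split the divergence: for $\dd((I-\PZ0P)\nabla u)$ I use $\|\dd\,\vv\|_{0,E}\lesssim|\vv|_{1,E}$ and the third Bramble--Hilbert estimate of Lemma \ref{lm:bramble} with $m=1$, $s=\reg+1$, giving $\lesssim h_E^{\reg-1}|u|_{\reg+1,E}$; for $\dd(\PZ0P \nabla \eint)$, since the argument is a polynomial, I apply the inverse estimate of Lemma \ref{lm:inverse}, the $L^2$-stability of $\PZ0P$, and the interpolation bound of Lemma \ref{lm:interpolation}, getting $\lesssim h_E^{-1}\|\nabla\eint\|_{0,E}\lesssim h_E^{\reg-1}|u|_{\reg+1,E}$. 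This gives $\tau_E\epsilon\,|T_2|\lesssim \tau_E^{1/2}\epsilon\, h_E^{\max\{\reg-1,0\}}\|u\|_{\reg+1,E}\|e_h\|_{\supg,E}$, which is absorbed into the $T_1$ bound because $\|\bb\|_{[W^{\max\{\reg-1,0\}}_{\infty}(E)]^2}/\beta_E \ge \|\bb\|_{[L^{\infty}(E)]^2}/\beta_E = 1$. Summing the two contributions yields the claim. The one genuinely delicate point is the second-order operator $\Delta u = \dd\nabla u$: it is responsible for the loss of one power of $h_E$ (hence the exponent $\max\{\reg-1,0\}$ instead of $\reg$) and it forces the two-tool treatment of $T_2$, where the non-polynomial part $(I-\PZ0P)\nabla u$ is handled by approximation while the polynomial part $\PZ0P\nabla\eint$ must instead be handled by the inverse estimate of Lemma \ref{lm:inverse}. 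One should also watch the borderline case $\reg=1$, where $\max\{\reg-1,0\}=0$ and the Bramble--Hilbert step in $T_1$ degenerates into the crude bound $\|(I-\PZ0P)((\Delta u)\bb)\|_{0,E}\le\|(\Delta u)\bb\|_{0,E}\le\|\bb\|_{[L^{\infty}(E)]^2}\|u\|_{2,E}$, so that only $L^{\infty}$-in-$\bb$ and $H^2$-in-$u$ regularity are needed.
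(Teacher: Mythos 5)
Your proposal is correct and takes essentially the same route as the paper: your $T_1$ and $T_2$ coincide (up to sign and ordering) with the paper's terms $\errLB$ and $\errLA$, and you treat them with the identical tools --- inserting $(I-\PZ0P)$ in both slots, Bramble--Hilbert applied to $(\Delta u)\,\bb$, and the inverse estimate of Lemma \ref{lm:inverse} for the polynomial part $\PZ0P\nabla\eint$ after splitting $\nabla u - \PZ0P\nabla\uint$. The only cosmetic difference is your explicit absorption of the $T_2$ bound into the $T_1$ bound via $\|\bb\|_{[W^{\max\{\reg-1,0\}}_{\infty}(E)]^2}/\beta_E \geq 1$, a step the paper leaves implicit when collecting $\errLA$ and $\errLB$ into the final estimate.
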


\begin{proof}
By definition of $L^2$-orthogonal projection we infer
\begin{equation}
\label{eq:errL}
\begin{aligned}
\errL =&
\tau_E \epsilon\bigl(\dd \PZ0P \nabla \uint, \bb \cdot \PZ0P \nabla  e_h \bigr)_{0,E} -
\tau_E \epsilon\bigl(\Delta u, \bb \cdot \nabla e_h \bigr)_{0,E}  
\\
=& \tau_E \epsilon \bigl( \dd (\PZ0P \nabla \uint - \nabla u), \bb \cdot \PZ0P \nabla  e_h\bigr)_{0,E} 
- \tau_E \epsilon \bigl(\Delta u, \bb \cdot (I - \PZ0P) \nabla e_h \bigr)_{0,E}  
\\
=& \tau_E \epsilon \bigl( \dd (\PZ0P \nabla \uint - \nabla u), \bb \cdot \PZ0P \nabla  e_h \bigr)_{0,E} 
- \tau_E \epsilon\bigl((I - \PZ0P) \Delta u \bb, (I - \PZ0P) \nabla e_h \bigr)_{0,E} 
\\
=: & \errLA + \errLB \,.
\end{aligned}
\end{equation}
The term $\errLA$, employing Lemma \ref{lm:inverse}, Lemma \ref{lm:bramble} and Lemma \ref{lm:interpolation} is estimated as follows 
\begin{equation}
\label{eq:errLA}
\begin{aligned}
\errLA &\leq
\tau_E \epsilon \| \dd (\nabla u - \PZ0P \nabla \uint)\|_{0,E} \| \bb \cdot \PZ0P \nabla  e_h\|_{0,E}
\\
&\leq
\tau_E^{1/2} \epsilon \| \dd (\nabla u - \PZ0P \nabla \uint)\|_{0,E} \|e_h\|_{\supg, E}
\\
&\leq
\tau_E^{1/2} \epsilon \left(
 \| \dd (\nabla u -  \PZ0P \nabla u)\|_{0,E} + \| \dd \PZ0P (\nabla u -   \nabla \uint)\|_{0,E}
 \right) \|e_h\|_{\supg, E}
\\
&\leq
\tau_E^{1/2} \epsilon \left(
 | (I -  \PZ0P) \nabla u|_{1,E} + h_E^{-1} \gamma_E^{1/2}\|\nabla \eint\|_{0,E}
 \right) \|e_h\|_{\supg, E} 
\\ 
&\lesssim
\tau_E^{1/2} \epsilon 
h_E^{\reg-1} |u|_{\reg+1,E}
\|e_h\|_{\supg, E} 
 \,.
\end{aligned}
\end{equation}
The second term in \eqref{eq:errL}, recalling \eqref{eq:PN-PZ0P}, can be easily bounded as follows
\begin{equation}
\label{eq:errLB}
\begin{aligned}
\errLB &\leq \tau_E \epsilon \|(I- \PZ0P) \Delta u \bb \|_{0,E} \|(I - \PZ0P) \nabla e_h\|_{0,E}
\leq 
\tau_E^{1/2} \epsilon \frac{\|(I - \PZ0P)\Delta u \bb \|_{0,E}}{\beta_E} \|e_h\|_{\supg, E} 
\\
& \lesssim
\tau_E^{1/2} \epsilon \frac{|\Delta u \bb |_{\theta,E}}{\beta_E} h_E^{\theta} \|e_h\|_{\supg, E} 
 \lesssim
\tau_E^{1/2} \epsilon \frac{\|\bb\|_{[W^{\theta}_{\infty}(E)]^2}}{\beta_E} h_E^{\theta} \|u\|_{\reg+1,E} \|e_h\|_{\supg, E} 
\,.
\end{aligned}
\end{equation}
where $\theta = \max{ \{\reg-1,0 \}}$.
Collecting \eqref{eq:errLA} and \eqref{eq:errLB} in \eqref{eq:errL} we get the thesis.
\end{proof}

The last and most challenging step in the analysis consists in estimating the term $\errb$ in \eqref{eq:abstract} for both $\bfp$ and $\bfb$ (that we denote respectively by $\errbp$ and $\errbb$), see also Remark \ref{rem:chal}.

\begin{lemma}[Estimate of $\errbp$]
\label{lm:errbp}
Let $\bfp(\cdot, \cdot)$ be the bilinear form in \eqref{eq:bfp}.
Then under assumptions \textbf{(A1)} and \textbf{(A2)},
the term $\errbp$ can be bounded as follows (for $0 < s \le k$)
\begin{equation}
\label{eq:errbpstima}
\begin{aligned}
\errbp  &\lesssim
\left( \sigma_E \frac{\|\bb\|_{[W^{\reg+1}_{\infty}(E)]^2}}{\beta_E} h_E^{\reg+1} \|u\|_{\reg+1,E}+
\frac{1}{\epsilon^{1/2}}
|\bb|_{[W^1_{\infty}(E)]^2}
h_E^{\reg+2} |u|_{\reg+1,E} \right) \|e_h\|_{\supg,E} +
\\
& \qquad  \qquad \qquad  \qquad \qquad  +
|\bb|_{[W^{\reg+1}_{\infty}(E)]^2}
h_E^{2\reg+1} |u|_{\reg+1,E} \|e_h\|_{0,E} +
\int_{\partial E} (\bb \cdot \nn^E) \eint e_h \,{\rm d}s
\end{aligned}
\end{equation}
where for any $E \in \Omega_h$
\begin{equation}
\label{eq:sigma}
\sigma_E = \min\left\{ \frac{\beta_E}{\epsilon^{1/2}}, \frac{1}{\tau_E^{1/2}} \right\} = \beta_E \lambda_E \,.
\end{equation}
\end{lemma}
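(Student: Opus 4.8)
The plan is to write $\errbp=\bskewE(u,e_h)-\bskewEh(\uint,e_h)$ as the sum of an \emph{interpolation} part and a \emph{consistency} part,
\[
\errbp=\bskewE(\eint,e_h)+\bigl(\bskewE(\uint,e_h)-\bskewEh(\uint,e_h)\bigr),\qquad \eint=u-\uint ,
\]
and to estimate the two contributions by different arguments. For the first one I would exploit the skew-symmetric structure \eqref{eq:bskew-c} together with $\dd\bb=0$: integrating by parts the term $b^E(\eint,e_h)$ moves the derivative onto $e_h$ and yields
\[
\bskewE(\eint,e_h)=-\int_E \eint\,(\bb\cdot\nabla e_h)\,{\rm d}E+\frac12\int_{\partial E}(\bb\cdot\nn^E)\,\eint\,e_h\,{\rm d}s .
\]
The volume integral is bounded using $\|\eint\|_{0,E}\lesssim h_E^{\reg+1}|u|_{\reg+1,E}$ (Lemma \ref{lm:interpolation}) and $\|\bb\cdot\nabla e_h\|_{0,E}\lesssim \sigma_E\|e_h\|_{\supg,E}$, the latter obtained by combining the diffusive control $\|\nabla e_h\|_{0,E}\le\epsilon^{-1/2}\|e_h\|_{\supg,E}$ with the streamline control of Remark \ref{rm:norm}; this matches the first term of the claimed bound. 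The boundary integral is exactly the term kept in the statement: it is retained because, upon summation over $\Omega_h$, it cancels on interior edges (where $\eint$, $e_h$ and $\bb\cdot\nn^E$ are single-valued with opposite normals) and vanishes on $\partial\Omega$ (where $e_h=0$).

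For the consistency part I would insert the projections $\PP0P$, $\P0$ by repeated add-and-subtract, turning each discrete factor $\PP0P\nabla(\cdot)$, $\P0(\cdot)$ of \eqref{eq:bfp}--\eqref{eq:bskewEh} into the exact one plus a projection/interpolation residual (writing also $\nabla\uint=\nabla u-\nabla\eint$). This produces pieces of three types: residual-gradient pieces carrying $(I-\PP0P)\nabla u$, residual-value pieces carrying $(I-\P0)e_h$, and interpolation pieces carrying $\nabla\eint$ or $\P0\eint$. The smooth residuals are estimated by Bramble--Hilbert (Lemma \ref{lm:bramble}) applied both to the data $u$ and to the weight $\bb$, the interpolation residuals by Lemma \ref{lm:interpolation}; whenever a factor involving $e_h$ is differentiated or appears as $(I-\PN)e_h$ or $(I-\PP0P)\nabla e_h$, it is charged to $\|e_h\|_{\supg,E}$ via \eqref{eq:loc_nom_def}, using \eqref{eq:PN-PZ0P} to pass between the projections. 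Since most such $e_h$-factors admit two competing bounds — one through the diffusive term $\epsilon^{1/2}\|\nabla e_h\|$ and one through the streamline/stabilization terms $\tau_E^{1/2}\|\bb\cdot\PZ0P\nabla e_h\|$, $\tau_E^{1/2}\beta_E\|\nabla(I-\PN)e_h\|$ — retaining the smaller of the two is what produces the multipliers $\sigma_E$ and $\lambda_E=\sigma_E/\beta_E$ in the statement.

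The genuinely delicate pieces are those in which $e_h$ survives \emph{undifferentiated}, so that it can only be measured in $\|e_h\|_{0,E}$ and cannot be charged to the SUPG norm with a good power of $h_E$. Here the idea is to use $L^2$-orthogonality twice. A residual such as $(I-\PP0P)\nabla u$ is orthogonal to $[\Pk_k(E)]^2$, so in an integral of the form $\int_E\bb\cdot(I-\PP0P)\nabla u\,e_h\,{\rm d}E$ one may freely subtract the best polynomial approximation of $\bb\,e_h$ inside the pairing; this transfers an additional order of $h_E$ onto the smooth weight $\bb$ (through $\|\bb\|_{[W^{\reg+1}_\infty(E)]^2}$) which, together with the order $h_E^{\reg}$ already gained on $u$, assembles the high-order factor $h_E^{2\reg+1}$ multiplying the irreducible $\|e_h\|_{0,E}$ — precisely the second retained term. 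This $\|e_h\|_{0,E}$ remainder is left untouched at the local level and absorbed later globally, e.g. through the $H^1_0(\Omega)$-Poincar\'e control of $e_h$.

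The main obstacle throughout is that the advective field $\bb$ is \emph{variable} and therefore does not commute with the projections $\PP0P$, $\P0$: the discrete convective form is not polynomially consistent, and a careless estimate either destroys robustness in $\epsilon$ or leaves an $\|e_h\|_{0,E}$ factor at too low a power of $h_E$. The crux is the bookkeeping that guarantees that every $e_h$-contribution is either converted into $\|e_h\|_{\supg,E}$ with the correct $\sigma_E$/$\lambda_E$ weight, or cleanly isolated as the boundary term and the high-order $\|e_h\|_{0,E}$ remainder, with the regularity of $\bb$ always entering in the scaled form $\|\bb\|_{[W^{\cdot}_\infty(E)]^2}/\beta_E$.
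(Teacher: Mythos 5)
Your proposal is correct and follows essentially the same route as the paper's proof: the paper expands $2\errbp$ directly into six terms $\ebA,\dots,\ebF$ rather than first splitting off $\bskewE(\eint,e_h)$, but your key mechanisms coincide exactly with its estimates --- the integration by parts of the $\eint$-term retaining the boundary integral (cancelled globally via \eqref{eq:boundarycan}), the competing diffusive/streamline bounds producing $\sigma_E$ and $\lambda_E$, and the double use of $L^2$-orthogonality of $(I-\PP0P)\nabla\uint$ against polynomial approximations of $\bb$ (the paper's $\ebC$ and $\ebD$), which yields precisely the $\epsilon^{-1/2}h_E^{\reg+2}$ term and the high-order $h_E^{2\reg+1}\|e_h\|_{0,E}$ remainder. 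Your reorganization into interpolation plus consistency parts is a harmless variant that reduces to the same six estimates after the add-and-subtract steps.
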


\begin{proof}
By the definition of the skew symmetric forms  \eqref{eq:bskew-c} and \eqref{eq:bskewEh} we need to estimate the terms
\begin{align*}
\errbA &:=(\bb \cdot \nabla u, e_h)_E - (\bb \cdot \PP0P \nabla \uint, \P0 e_h)_E \,,
\\
\errbB &:= (\P0 \uint, \bb \cdot \PP0P \nabla e_h)_E - (u, \bb \cdot \nabla e_h)_E \,.  
\end{align*}
Using usual computations we infer
\[
\begin{aligned}
\errbA  
&= 
\bigl(\bb \cdot \nabla u, e_h \bigr)_E - 
\bigl(\bb \cdot \PP0P \nabla \uint, \P0 e_h \bigr)_E
\\
& = 
\bigl(\bb \cdot \nabla u, e_h \bigr)_E - 
\bigl(\bb \cdot \nabla \uint, \P0 e_h \bigr)_E +
\bigl(\bb \cdot (I - \PP0P) \nabla \uint, \P0 e_h \bigr)_E
\\
& =
\bigl(\bb \cdot \nabla (u - \uint), e_h \bigr)_E + 
\bigl(\bb \cdot \nabla \uint, (I- \P0) e_h \bigr)_E +
\bigl((I - \PP0P) \nabla \uint, \bb\P0 e_h \bigr)_E
\\
& =
\bigl(\bb \cdot \nabla \eint, e_h \bigr)_E + 
\bigl((I- \P0)  \bb \cdot \nabla \uint, (I- \P0) e_h \bigr)_E +
\\
& \qquad + \bigl((I - \PP0P) \nabla \uint, \bb(\P0 e_h - \Pi_0^{0,E} e_h) \bigr)_E + 
\bigl((I - \PP0P) \nabla \uint, \bb \Pi_0^{0,E} e_h \bigr)_E
\\
& =: \ebA + \ebB + \ebC + \ebD \,,
\\
\vspace{2ex}
\\
\errbB  &= 
\bigl(\P0 \uint, \bb \cdot \PP0P \nabla e_h \bigr)_E - 
\bigl(u, \bb \cdot \nabla e_h \bigr)_E
\\
& = 
\bigl( \P0 \uint - u, \bb \cdot \PP0P \nabla e_h \bigr)_E + 
\bigl(u, \bb \cdot (\PP0P - I) \nabla e_h \bigr)_E
\\
& = 
\bigl( \P0 \uint - u, \bb \cdot \PP0P \nabla e_h \bigr)_E + 
\bigl( (I - \PP0P) \bb u,   (\PP0P - I) \nabla e_h \bigr)_E
\\
& =: \ebE + \ebF \,,
\end{aligned}
\]
yielding the following expression for $\errbp$
\begin{equation}
\label{eq:errb}
2 \errbp  = \ebA + \ebB + \ebC + \ebD + \ebE + \ebF \,.
\end{equation}
We now analyse each term $\ebI$ for $i=1, \dots, 6$ in the sum above.

\noindent
$\bullet \,\, \ebA$: using an integration by parts, bound \eqref{eq:PN-PZ0P} and the definition of $\|\cdot\|_{\supg, E}$ we infer 
\begin{equation}
\label{eq:ebA}
\begin{aligned}
\ebA &= (\bb \cdot \nabla \eint, e_h)_E 
= -(\eint, \bb \cdot \nabla e_h)_E + \int_{\partial E} (\bb \cdot \nn^E) \eint e_h \, {\rm d}s 
\\
& \leq
\|\eint\|_{0,E} \|\bb \cdot \nabla e_h\|_{0,E} + \int_{\partial E} (\bb \cdot \nn^E) \eint e_h \,{\rm d}s 
\\
& \leq
\|\eint\|_{0,E} 
\left(\|\bb \cdot \PZ0P \nabla e_h\|_{0,E} + 
\beta_E \| \nabla(I - \PN) e_h\|_{0,E}
\right)
+ \int_{\partial E} (\bb \cdot \nn^E) \eint e_h \,{\rm d}s 
\\
& \lesssim
\min \left\{ \frac{\beta_E}{\epsilon^{1/2}}, \frac{1}{\tau_E^{1/2}} \right\}
\|\eint\|_{0,E} \|e_h\|_{\supg,E} + \int_{\partial E} (\bb \cdot \nn^E) \eint e_h \,{\rm d}s 
\\
& \lesssim
\sigma_E
h_E^{\reg+1} |u|_{\reg+1,E} \|e_h\|_{\supg,E} + \int_{\partial E} (\bb \cdot \nn^E) \eint e_h \,{\rm d}s
\,.
\end{aligned}
\end{equation}

\noindent
$\bullet  \,\, \ebB$: a scaled Poincar\'e inequality and the definition of $L^2$-projection imply 
\begin{equation}
\label{eq:ebB}
\begin{aligned}
\ebB & = 
\bigl((I- \P0)\bb \cdot \nabla \uint, (I- \P0) e_h \bigr)_E
\\
& = 
\bigl((I- \P0) \bb \cdot \nabla u, (I- \P0) e_h \bigr)_E - 
\bigl((I- \P0)\bb \cdot \nabla \eint, (I- \P0) e_h \bigr)_E
\\
& \leq 
\left( \|(I- \P0) \bb \cdot \nabla u\|_{0,E} + \|\bb \cdot \nabla \eint\|_{0,E}\right) 
\|(I- \P0) e_h\|_{0,E}
\\
& \leq
\left( \|(I- \P0) \bb \cdot \nabla u\|_{0,E} + \|\bb \cdot \nabla \eint\|_{0,E}\right) 
\|(I- \PN) e_h\|_{0,E}
\\
& \lesssim
\min\left\{ \frac{\beta_E}{\epsilon^{1/2}}, \frac{1}{\tau_E^{1/2}} \right\} h_E
\left( \frac{\|(I- \P0) \bb \cdot \nabla u\|_{0,E}}{\beta_E} + 
\frac{\|\bb \cdot \nabla \eint\|_{0,E}}{\beta_E}\right) 
\|e_h\|_{\supg,E} 
\\
& \lesssim
\sigma_E 
\left( \frac{|\bb \cdot \nabla u|_{\reg,E}}{\beta_E} + 
|u|_{\reg+1,E} \right) 
h_E^{\reg+1} 
\|e_h\|_{\supg,E}
\\
& \lesssim
\sigma_E 
\left( \frac{\|\bb\|_{[W^{\reg}_{\infty}(E)]^2}}{\beta_E} + 
1\right) 
h_E^{\reg+1} \|u\|_{\reg+1,E} 
\|e_h\|_{\supg,E}
\,.
\end{aligned}
\end{equation}

\noindent
$\bullet \,\, \ebC$: from the definition of $L^2$-projection, the Poincar\'e inequality and Lemma \ref{lm:bramble}, we infer 
\begin{equation}
\label{eq:ebC}
\begin{aligned}
\ebC & = 
\bigl((I - \PP0P) \nabla \uint, \bb(\P0 e_h - \Pi_0^{0,E} e_h) \bigr)_E 
\\ 
& = \bigl((I - \PP0P) \nabla \uint, (\bb - \boldsymbol{\Pi}_0^{0,E} \bb)(\P0 e_h - \Pi_0^{0,E} e_h)\bigr)_E
\\
& \leq 
\|(I - \PP0P) \nabla \uint\|_{0,E} 
\|(I - \boldsymbol{\Pi}_0^{0,E}) \bb\|_{L^{\infty}}
\|(\P0 - \Pi_0^{0,E}) e_h\|_{0,E} 
\\
& \leq
\|(I - \PZ0P) \nabla \uint\|_{0,E} 
\|(I - \boldsymbol{\Pi}_0^{0,E}) \bb\|_{L^{\infty}}
\|(I - \Pi_0^{0,E}) e_h\|_{0,E} 
\\
& \lesssim
\frac{h_E}{\epsilon^{1/2}}
\bigl( \|\nabla \eint\|_{0,E} + \|\nabla \epi\|_{0,E}  \bigr)
\|(I - \boldsymbol{\Pi}_0^{0,E}) \bb\|_{L^{\infty}}
\|e_h\|_{\supg,E} 
\\
& \lesssim
\frac{1}{\epsilon^{1/2}}
|\bb|_{[W^1_{\infty}(E)]^2}
h_E^{\reg+2} |u|_{\reg+1,E} \|e_h\|_{\supg,E}
\,.
\end{aligned}
\end{equation}

\noindent
$\bullet \,\, \ebD$: using similar computations of the previous item we obtain
\begin{equation}
\label{eq:ebD}
\begin{aligned}
\ebD & = 
\bigl((I - \PP0P) \nabla \uint, \bb \Pi_0^{0,E}  e_h \bigr)_E 
\\ 
& =
\bigr((I - \PP0P) \nabla \uint, (\bb - \PP0P \bb) \Pi_0^{0,E} e_h \bigr)_E
\\
& \leq 
\|(I - \PP0P) \nabla \uint\|_{0,E}  \|(I - \boldsymbol{\Pi}_0^{k,E}) \bb\|_{L^{\infty}}
\|\Pi_0^{0,E} e_h\|_{0,E} 
\\
& \leq
\bigl( \|\nabla \eint\|_{0,E} + \|\nabla \epi\|_{0,E}  \bigr)
\|(I - \boldsymbol{\Pi}_0^{k,E}) \bb\|_{L^{\infty}}
\|e_h\|_{0,E} 
\\
& \lesssim
|\bb|_{[W^{\reg+1}_{\infty}(E)]^2}
h_E^{2\reg+1} |u|_{\reg+1,E} \|e_h\|_{0,E}
\,.
\end{aligned}
\end{equation}

\noindent
$\bullet \,\, \ebE$: exploiting the property of $L^2$-projection and bound \eqref{eq:PN-PZ0P} we get
\begin{equation}
\label{eq:ebE}
\begin{aligned}
\ebE & = \bigl( \P0 \uint - u, \bb \cdot \PP0P \nabla e_h \bigr)_E
\\
& =
\bigl( \P0 \uint - u, \bb \cdot \PZ0P \nabla e_h \bigr)_E +
\bigl( \P0 \uint - u, \bb \cdot (\PP0P - \PZ0P) \nabla e_h \bigr)_E
\\
& \leq 
\|\P0 \uint - u\|_{0,E} 
\left(
\|\bb \cdot \PZ0P \nabla e_h\|_{0,E} + 
\beta_E \|(\PP0P - \PZ0P) \nabla e_h \|_{0,E}
\right)
\\
& \leq
\left(\|(I - \P0) u\|_{0,E} +\|\eint\|_{0,E}\right)
\left(
\|\bb \cdot \PZ0P \nabla e_h\|_{0,E} + 
\beta_E \|\nabla(I - \PN) e_h \|_{0,E}
\right)
\\
& \leq 
\min\left\{ \frac{\beta_E}{\epsilon^{1/2}}, \frac{1}{\tau_E^{1/2}} \right\}
\left(\|(I - \P0) u\|_{0,E} +\|\eint\|_{0,E}\right)
\|e_h\|_{\supg, E} 
\\
& \lesssim 
\sigma_E h_E^{\reg+1} |u|_{\reg+1,E}
\|e_h\|_{\supg, E}
\,.
\end{aligned}
\end{equation}

\noindent
$\bullet \,\, \ebF$: using similar computations of the previous item we have
\begin{equation}
\label{eq:ebF}
\begin{aligned}
\ebF & = 
\bigl( (I - \PP0P) \bb u,   (\PP0P - I) \nabla e_h \bigr)_E
\\
& \leq
\|(I - \PP0P) \bb u \|_{0,E}  \|(\PP0P - I) \nabla e_h \|_{0,E}
\leq 
\|(I - \PP0P) \bb u \|_{0,E}  \|\nabla (I - \PN)e_h \|_{0,E}
\\
& \leq
\min\left\{ \frac{\beta_E}{\epsilon^{1/2}}, \frac{1}{\tau_E^{1/2}} \right\}
\frac{\|(I - \PP0P) \bb u \|_{0,E}}{\beta_E}
\|e_h\|_{\supg, E} 
\\
& \lesssim
\sigma_E
\frac{|\bb u |_{\reg+1,E}}{\beta_E}
h_E^{\reg+1} \|e_h\|_{\supg, E}
\\
& \lesssim
\sigma_E
\frac{\|\bb\|_{[W^{\reg+1}_{\infty}(E)]^2}}{\beta_E}
h_E^{\reg+1} \|u\|_{\reg+1,E}\|e_h\|_{\supg, E}
\,.
\end{aligned}
\end{equation}
The thesis now follows gathering \eqref{eq:ebA}--\eqref{eq:ebF} in \eqref{eq:errb}.
\end{proof}

\begin{lemma}[Estimate of $\errbb$]
\label{lm:errbb}
Let $\bfb(\cdot, \cdot)$ be the bilinear form in \eqref{eq:bfb}.
Then under assumptions \textbf{(A1)} and \textbf{(A2)},
the term $\errbb$ can be bounded as follows
\[
\begin{aligned}
\errbb  \lesssim
&
\sigma_E  \frac{\|\bb\|_{[W^{\reg+1}_{\infty}(E)]^2}}{\beta_E} h_E^{\reg+1} \|u\|_{\reg+1,E} \|e_h\|_{\supg,E} +
\int_{\partial E} (\bb \cdot \nn^E) \eint e_h \,{\rm d}s
\end{aligned}
\]
where $\sigma_E$ is defined in \eqref{eq:sigma}.
\end{lemma}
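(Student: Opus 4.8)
The plan is to mirror the structure of the proof of Lemma~\ref{lm:errbp}, but to exploit the integration-by-parts structure built into the novel form~\eqref{eq:bfb} in order to eliminate the two non-robust contributions (those carrying $\epsilon^{-1/2}$ and $\|e_h\|_{0,E}$, namely $\ebC$ and $\ebD$) that appeared there. Using definition~\eqref{eq:bskew-c} of the continuous skew form and~\eqref{eq:bskewEh} of its discrete counterpart with $b_h^E = \bfb$, I would first write $2\errbb = \errbA + \errbB$ with $\errbA := b^E(u,e_h) - \bfb(\uint,e_h)$ and $\errbB := \bfb(e_h,\uint) - b^E(e_h,u)$, and treat the two pieces separately. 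Throughout I would rely on three ingredients: the $L^2$-orthogonality $(I-\P0)e_h \perp \Pk_k(E)$; the approximation bounds of Lemmas~\ref{lm:bramble} and~\ref{lm:interpolation}, in particular $\|\nabla(u-\P0\uint)\|_{0,E}\lesssim h_E^{\reg}|u|_{\reg+1,E}$ and $\|u-\P0\uint\|_{0,E}\lesssim h_E^{\reg+1}|u|_{\reg+1,E}$; and the $\supg$-norm controls $\|\bb\cdot\nabla e_h\|_{0,E}\lesssim \sigma_E\|e_h\|_{\supg,E}$ (cf. Remark~\ref{rm:norm}) and $\|(I-\P0)e_h\|_{0,E}\lesssim h_E\beta_E^{-1}\sigma_E\|e_h\|_{\supg,E}$.

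For $\errbA$ I would insert $\P0 e_h$ and split the volume mismatch as $\int_E \bb\cdot\nabla u\,(I-\P0)e_h + \int_E \bb\cdot\nabla(u-\P0\uint)\,\P0 e_h$, then write $\P0 e_h = e_h - (I-\P0)e_h$ and integrate the resulting $\int_E \bb\cdot\nabla(u-\P0\uint)\,e_h$ by parts (using $\dd\bb=0$). The boundary contributions so produced combine with the boundary term of $\bfb(\uint,e_h)$ and collapse exactly to $\int_{\partial E}(\bb\cdot\nn^E)\eint e_h$, the term retained in the statement. The three remaining volume integrals are each bounded by $\sigma_E\frac{\|\bb\|_{[W^{\reg+1}_\infty(E)]^2}}{\beta_E}h_E^{\reg+1}\|u\|_{\reg+1,E}\|e_h\|_{\supg,E}$: for $\int_E \bb\cdot\nabla u\,(I-\P0)e_h$ by subtracting $\P0(\bb\cdot\nabla u)$ via orthogonality and applying Lemma~\ref{lm:bramble}, and for the other two by pairing $\|\nabla(u-\P0\uint)\|_{0,E}$ and $\|\bb\cdot\nabla e_h\|_{0,E}$ against $\|(I-\P0)e_h\|_{0,E}$ and $\|u-\P0\uint\|_{0,E}$ respectively.

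For $\errbB$ I would integrate by parts both the discrete volume term of $\bfb(e_h,\uint)$ and the continuous term $b^E(e_h,u)$, so that $\nabla$ falls off $e_h$ in both; splitting $\P0 e_h = e_h - (I-\P0)e_h$ once more and reusing the orthogonality trick on $\int_E(I-\P0)e_h\,\bb\cdot\nabla u$, all volume terms again reduce to the target bound. The only surviving boundary term is $\int_{\partial E}(\bb\cdot\nn^E)(I-\P0)e_h\,(I-\P0)\uint$, which I would control by a scaled trace inequality using $\|(I-\P0)\uint\|_{0,\partial E}\lesssim h_E^{\reg+1/2}|u|_{\reg+1,E}$ and $\|(I-\P0)e_h\|_{0,\partial E}\lesssim h_E^{1/2}\beta_E^{-1}\sigma_E\|e_h\|_{\supg,E}$; the latter in turn needs $\|\nabla(I-\P0)e_h\|_{0,E}\lesssim \beta_E^{-1}\sigma_E\|e_h\|_{\supg,E}$, which I would obtain by comparing $\P0$ with $\PN$ and invoking an inverse estimate on the polynomial $(\PN-\P0)e_h$.

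The main obstacle is structural rather than computational: a careless manipulation produces the term $\int_E \bb\cdot\nabla\P0\uint\,(I-\P0)e_h$, in which $\nabla\P0\uint\approx\nabla u$ is only $O(1)$, so that together with $\|(I-\P0)e_h\|_{0,E}=O(h_E)$ one gains a single power of $h_E$ and loses robustness --- precisely the mechanism behind the non-uniform terms of Lemma~\ref{lm:errbp}. The crux is therefore to arrange every integration by parts so that the ``large'' factor $\nabla u$ is always paired with $(I-\P0)e_h$ through the $L^2$-orthogonality, yielding $(I-\P0)(\bb\cdot\nabla u)$ which is $O(h_E^{\reg})$, while the remaining factors multiplying $(I-\P0)e_h$ are the genuinely small differences $\nabla(u-\P0\uint)$. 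The extra boundary integral in~\eqref{eq:bfb} is exactly what allows these cancellations to close without leaving an uncontrolled $\|e_h\|_{0,E}$ behind.
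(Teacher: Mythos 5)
Your proposal is correct and follows essentially the same route as the paper's proof: the same splitting $2\errbb = \errbA + \errbB$, the same key mechanism of pairing the $O(1)$ factor $\bb\cdot\nabla u$ with $(I-\P0)e_h$ via $L^2$-orthogonality to produce $(I-\P0)(\bb\cdot\nabla u)$, the same integration by parts exploiting $\dd\bb=0$ to extract the uncancelled term $\int_{\partial E}(\bb\cdot\nn^E)\eint e_h$, and the same scaled trace inequality (with the norm $\tri\cdot\tri_{1,E}$) for the residual boundary term of type $\int_{\partial E}(\bb\cdot\nn^E)(I-\P0)e_h\,(I-\P0)\uint$. The only differences are cosmetic --- you integrate by parts so the derivative lands on $e_h$ and bound $\|\bb\cdot\nabla e_h\|_{0,E}\lesssim \sigma_E\|e_h\|_{\supg,E}$ via Remark \ref{rm:norm}, whereas the paper keeps $\bb\cdot\nabla\P0 e_h$ and splits through $\PZ0P$ --- and both yield the stated bound.
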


\begin{proof}
Recalling definition  \eqref{eq:bfb} we need to estimate the terms
\begin{align*}
\errbA &:=(\bb \cdot \nabla u, e_h)_E - (\bb \cdot  \nabla \P0 \uint, \P0 e_h)_E 
- \int_{\partial E} (\bb \cdot \nn^E) (I - \P0)\uint \P0 e_h \, {\rm d}s \,,
\\
\errbB &:= (\P0 \uint, \bb \cdot \nabla \P0 e_h)_E - (u, \bb \cdot \nabla e_h)_E 
+ \int_{\partial E} (\bb \cdot \nn^E) (I - \P0)e_h \P0 \uint \, {\rm d}s
\,.  
\end{align*}
By integration by parts we have
\[
\begin{aligned}
\errbA  
& = 
\bigl(\bb \cdot \nabla u, (I - \P0)e_h \bigr)_E + 
\bigl(\bb \cdot  \nabla (u - \P0 \uint), \P0 e_h \bigr)_E +
\\
& \qquad -\int_{\partial E} (\bb \cdot \nn^E) (I - \P0)\uint \P0 e_h \, {\rm d}s
\\
& =
\bigl(\bb \cdot \nabla u, (I - \P0)e_h \bigr)_E - 
\bigl(u - \P0 \uint, \bb \cdot \nabla \P0 e_h \bigr)_E + 
\\
& \qquad +\int_{\partial E} (\bb \cdot \nn^E) (u - \uint) \P0 e_h \, {\rm d}s
\\
& =
\bigl((I - \P0) \bb \cdot \nabla u, (I - \P0)e_h \bigr)_E + 
\bigl(\P0 \uint - u, \bb \cdot \nabla \P0 e_h \bigr)_E + 
\\
& \qquad +\int_{\partial E} (\bb \cdot \nn^E) \eint \P0 e_h \, {\rm d}s
\\
& =: \ebA + \ebB + \ebC \,,
\end{aligned}
\]
\[
\begin{aligned}
\errbB  
& = 
\bigl(\P0 \uint - u, \bb \cdot \nabla \P0 e_h \bigr)_E - 
\bigl(u, \bb \cdot \nabla ( I - \P0) e_h \bigr)_E +
\\
& \qquad  + \int_{\partial E} (\bb \cdot \nn^E) (I - \P0)e_h \P0 \uint \, {\rm d}s
\\
& = 
\bigl(\P0 \uint - u, \bb \cdot \nabla \P0 e_h \bigr)_E + 
\bigl(\bb \cdot \nabla u,  ( I - \P0) e_h \bigr)_E +
\\
& \qquad  + \int_{\partial E} (\bb \cdot \nn^E) (I - \P0)e_h (\P0 \uint - u)\, {\rm d}s
\\
& = 
\bigl(\P0 \uint - u, \bb \cdot \nabla \P0 e_h \bigr)_E + 
\bigl(( I - \P0)\bb \cdot \nabla u,  ( I - \P0) e_h \bigr)_E +
\\
& \qquad  + \int_{\partial E} (\bb \cdot \nn^E) (I - \P0)e_h (\P0 \uint - u)\, {\rm d}s
\\
& =: \ebB + \ebA + \ebD \,,
\end{aligned}
\]
yielding the following expression for $\errbb$
\begin{equation}
\label{eq:errb2}
2 \errbb  = 2\ebA + 2\ebB + \ebC + \ebD \,.
\end{equation}
We now analyse each term $\ebI$ for $i=1, \dots, 4$ in the sum above.

\noindent
$\bullet \,\, \ebA$: using the same computations in \eqref{eq:ebB} we infer
\begin{equation}
\label{eq:ebA2}
\begin{aligned}
\ebA &= 
\bigl((I- \P0) \bb \cdot \nabla u, (I- \P0) e_h \bigr)_E 
 \lesssim
\sigma_E 
 \frac{\|\bb\|_{[W^{\reg}_{\infty}(E)]^2}}{\beta_E}
h_E^{\reg+1} \|u\|_{\reg+1,E} 
\|e_h\|_{\supg,E}
\,.
\end{aligned}
\end{equation}

\noindent
$\bullet \,\, \ebB$: exploiting the computation in \eqref{eq:ebE} we obtain
\begin{equation}
\label{eq:ebB2}
\begin{aligned}
\ebB & = \bigl( \P0 \uint - u, \bb \cdot  \nabla \P0 e_h \bigr)_E
\\
& =
\bigl( \P0 \uint - u, \bb \cdot \PZ0P \nabla e_h \bigr)_E +
\bigl( \P0 \uint - u, \bb \cdot (\nabla \P0 e_h - \PZ0P \nabla e_h)  \bigr)_E
\\
& \leq 
\|\P0 \uint - u\|_{0,E} 
\left(
\|\bb \cdot \PZ0P \nabla e_h\|_{0,E} + 
\beta_E \|\PZ0P (\nabla e_h - \nabla \P0 e_h )  \|_{0,E}
\right)
\\
& \leq
\left(\|(I - \P0) u\|_{0,E} +\|\eint\|_{0,E}\right)
\left(
\|\bb \cdot \PZ0P \nabla e_h\|_{0,E} + 
\beta_E \|\nabla(I - \PN) \nabla e_h \|_{0,E}
\right)
\\
& \leq 
\min\left\{ \frac{\beta_E}{\epsilon^{1/2}}, \frac{1}{\tau_E^{1/2}} \right\}
\left(\|(I - \P0) u\|_{0,E} +\|\eint\|_{0,E}\right)
\|e_h\|_{\supg, E} 
\\
& \lesssim 
\sigma_E h_E^{\reg+1} |u|_{\reg+1,E}
\|e_h\|_{\supg, E}
\,.
\end{aligned}
\end{equation}

\noindent
$\bullet \,\, \ebC+ \ebD$: we use a scaled trace inequality \cite{brenner-scott:book} making use of the scaled norm
$\tri v \tri_{1,E}^2 := \| v \|_{L^2(E)}^2 + h_E^2 | v |_{H^1(E)}^2$ for all $v \in H^1(E)$. We obtain
\begin{equation}
\label{eq:ebC2}
\begin{aligned}
\ebC &+ \ebD  = 
\int_{\partial E} (\bb \cdot \nn^E) \eint \P0 e_h \, {\rm d}s +
\int_{\partial E} (\bb \cdot \nn^E) (I - \P0)e_h (\P0 \uint - u)\, {\rm d}s
\\
& = 
\int_{\partial E} (\bb \cdot \nn^E) (\P0 - I)e_h (\eint + u - \P0 \uint)\, {\rm d}s +
\int_{\partial E} (\bb \cdot \nn^E) \eint  e_h \, {\rm d}s 
\\
& \lesssim
\beta_E \bigl(\|\eint\|_{L^2(\partial E)} + \|u - \P0 \uint\|_{L^2(\partial E)} \bigr)
\|(I - \P0)e_h\|_{L^2(\partial E)}  +
\int_{\partial E} (\bb \cdot \nn^E) \eint  e_h \, {\rm d}s  
\\
& \lesssim
\beta_E  h_E^{-1}
\bigl(\tri \eint \tri_{1,E} + \tri u - \P0 \uint \tri_{1,E} \bigr)
\|(I - \P0)e_h\|_{0, E} 
 + 
\int_{\partial E} (\bb \cdot \nn^E) \eint  e_h \, {\rm d}s 
\\
& \lesssim
\beta_E  
\bigl(\tri \eint \tri_{1,E} + \tri u - \P0 u \tri_{1,E} \bigr)
\|\nabla(I - \PN)e_h\|_{0, E} 
 + 
\int_{\partial E} (\bb \cdot \nn^E) \eint  e_h \, {\rm d}s 
\\
& \lesssim
\sigma_E h_E^{\reg+1} |u|_{\reg+1, E}
 + 
\int_{\partial E} (\bb \cdot \nn^E) \eint  e_h \, {\rm d}s  
\end{aligned}
\end{equation}
The thesis now follows gathering \eqref{eq:ebA2}, \eqref{eq:ebB2} and \eqref{eq:ebC2} in \eqref{eq:errb2}.
\end{proof}

\begin{remark}\label{rem:chal}
The main difficulty in proving Lemmas \ref{lm:errbp} and \ref{lm:errbb} lays in handling a variable coefficient $\bb$ in the presence of projection operators, without paying a price for small values of $\varepsilon$. For form \eqref{eq:bfp}, we are able to obtain a ``damped'' dependence on $\varepsilon$:  in estimate \eqref{eq:errbpstima} the term $\epsilon^{-1/2}
|\bb|_{[W^1_{\infty}(E)]^2}
h_E^{\reg+2} |u|_{\reg+1,E}$ blows up as $\epsilon\to 0$, but at the same time it is of higher order with respect to $h_E$. Instead, for the new form \eqref{eq:bfb} we are able to obtain full independence from $\varepsilon$.
\end{remark}

We are now ready to prove the convergence results for the proposed VEM SUPG scheme.
The error estimates in Lemmas \ref{lm:epi}-- \ref{lm:errbb} 
 are explicit in the parameters of interest:
the mesh size $h$, the diffusive coefficient $\epsilon$, the advective field $\beta$ and the SUPG parameter $\tau_E$.
In order to simplify the final estimate and to make clearer 
the implications of the convergence results, in the following propositions we include the Sobolev regularity terms for
$u$, $f$ and the normalized norms $\frac{\|\bb\|_{[W^m_p(E)]^2}}{\beta_E}$ in the constant.

\begin{proposition}
\label{prp:bfp}
Under the assumptions \textbf{(A1)} and \textbf{(A2)}, let $u \in V$ be the
solution of equation \eqref{eq:problem-c} and $u_h \in V_h(\Omega_h)$ be the solution of equation \eqref{eq:supg-vem} obtained with the bilinear form $\bfp(\cdot, \cdot)$ in \eqref{eq:bfp}.
Then it holds that
\begin{multline*}
\|u - u_h\|^2_{\supg} \lesssim
\sum_{E \in \Omega_h}
\Theta^E_o
\biggl(
h^{2\reg}_E 
 (\epsilon +  \tau_E \beta_E^2 + \tau_E) + 
\lambda^2_E h_E^{2(\reg+2)} +
\lambda^2_E \beta_E^2 h_E^{2(\reg+1)}  +
\biggr.
\\
\left.
+
\tau_E \epsilon^2   h_E^{2(\reg-1)}  +
\beta_E^2 \frac{h_E^{2(\reg+2)}}{\epsilon}
+
\beta_E^2 \frac{h^{2(2\reg+1)}}{\epsilon}
\right) \,,
\end{multline*}
where the constant $\Theta^E_o$ depends on
$\|u\|_{s+1,E}$, $\|f\|_{s+1,E}$, $\frac{\|\beta\|_{[W^{s+1}_{\infty}(E)]^2}}{\beta_E}$.
\end{proposition}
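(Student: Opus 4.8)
The plan is to assemble the final error bound by inserting each of the previously established local estimates (Lemmas \ref{lm:epi} through \ref{lm:errbp}) into the abstract bound \eqref{eq:abstract} from Proposition \ref{prp:abstract}. Since every lemma already produces an estimate of the shape $(\text{something}) \, \|e_h\|_{\supg,E}$ (or $\|e_h\|_{0,E}$, or a boundary integral), the core mechanism is Young's inequality: for each term write $X \, \|e_h\|_{\supg,E} \le \frac{1}{2}\,\delta\,\|e_h\|_{\supg,E}^2 + \frac{1}{2\delta}\,X^2$, absorb the $\|e_h\|_{\supg}^2$ contributions into the left-hand side (which is legitimate because $\|u-u_h\|_{\supg}^2 \lesssim \|\eint\|_{\supg}^2 + \|e_h\|_{\supg}^2$ after a triangle inequality, and the coercivity of Proposition \ref{prp:coerciviity} controls $\|e_h\|_{\supg}^2$), and collect the remaining squared prefactors. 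This turns each $X\,\|e_h\|_{\supg,E}$ contribution into a $X^2$ summand in the final bound.

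**Step-by-step.**

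First I would recall from Proposition \ref{prp:abstract} that
\[
\|u-u_h\|^2_{\supg} \lesssim \|\eint\|^2_{\supg} + \sum_{E\in\Omega_h}\bigl(\errF + \erra + \errb + \errB + \errL\bigr),
\]
with $\errb = \errbp$ for the form \eqref{eq:bfp}. Next I would substitute the individual bounds: Lemma \ref{lm:epi} contributes the $(\epsilon + \tau_E\beta_E^2)h_E^{2\reg}|u|_{\reg+1,E}^2$ term directly; Lemma \ref{lm:errF} yields a prefactor whose square gives the $\lambda_E^2 h_E^{2(\reg+2)}$ piece together with the $\tau_E h_E^{2\reg}(\|\bb\|/\beta_E)^2$ piece; Lemma \ref{lm:errA} gives $\epsilon h_E^{2\reg}$; Lemma \ref{lm:errB} gives $\tau_E\beta_E^2 h_E^{2\reg}(\|\bb\|^2/\beta_E^2)^2$; Lemma \ref{lm:errL} gives $\tau_E\epsilon^2 h_E^{2(\reg-1)}$; and the four prefactor groups of Lemma \ref{lm:errbp} give $\lambda_E^2\beta_E^2 h_E^{2(\reg+1)}$ (using $\sigma_E = \beta_E\lambda_E$), $\epsilon^{-1}\beta_E^2 h_E^{2(\reg+2)}$, and $\epsilon^{-1}\beta_E^2 h_E^{2(2\reg+1)}$. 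After each application of Young's inequality I would absorb the $\|e_h\|_{\supg}^2$ part. Finally I would fold the Sobolev seminorms $\|u\|_{\reg+1,E}$, $\|f\|_{\reg+1,E}$, and the normalized coefficient norms $\|\bb\|_{[W^{m}_\infty(E)]^2}/\beta_E$ into the single constant $\Theta^E_o$, which leaves exactly the seven $h_E$-weighted terms in the stated sum.

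**Obstacles.**

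The genuinely delicate points are the two terms in Lemma \ref{lm:errbp} that are not already in the clean $X\,\|e_h\|_{\supg,E}$ form: the term $|\bb|_{[W^{\reg+1}_\infty(E)]^2} h_E^{2\reg+1}|u|_{\reg+1,E}\,\|e_h\|_{0,E}$ and the surviving boundary integral $\int_{\partial E}(\bb\cdot\nn^E)\eint e_h\,\mathrm{d}s$. For the former I would bound $\|e_h\|_{0,E}$ in terms of $\|e_h\|_{\supg,E}$ by a (scaled) Poincar\'e inequality, producing an $\epsilon^{-1/2}$-type factor, so that its square yields the $\epsilon^{-1}\beta_E^2 h_E^{2(2\reg+1)}$ contribution. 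The boundary integral is the true sticking point: after summing over all $E\in\Omega_h$, the interelement contributions on shared edges must cancel because $u$ and the interpolant $\uint$ are single-valued and $e_h\in V_h(\Omega_h)\subset H^1_0(\Omega)$ is globally continuous and vanishes on $\partial\Omega$, so the normal flux $\bb\cdot\nn^E$ flips sign across each internal edge while the trace of $(\bb\cdot\nn^E)\eint e_h$ is consistent. Thus the global sum of boundary integrals telescopes to zero and does not enter the final bound — the term is local in Lemma \ref{lm:errbp} only because the cancellation is a global phenomenon. I expect verifying this cancellation, together with correctly tracking the min-defined quantities $\lambda_E$ and $\sigma_E$ through the squaring, to be the main bookkeeping hazard; the rest is routine Young-inequality accounting.
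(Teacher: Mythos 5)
Your overall architecture coincides with the paper's proof: substitute Lemmas \ref{lm:epi}, \ref{lm:errF}, \ref{lm:errA}, \ref{lm:errB}, \ref{lm:errL} and \ref{lm:errbp} into the abstract estimate \eqref{eq:abstract}, absorb the $\|e_h\|_{{\rm supg}}$ factors by Young/Cauchy--Schwarz, fold the Sobolev regularity and the normalized coefficient norms $\|\bb\|_{[W^m_\infty(E)]^2}/\beta_E$ into $\Theta^E_o$ (using $\sigma_E=\beta_E\lambda_E$), and dispose of the boundary term by global cancellation. Your justification of $\sum_{E\in\Omega_h}\int_{\partial E}(\bb\cdot\nn^E)\,\eint\, e_h\,{\rm d}s=0$ (single-valued traces of $\eint e_h$ across internal edges, sign flip of $\bb\cdot\nn^E$, and $e_h\in H^1_0(\Omega)$) is exactly the paper's \eqref{eq:boundarycan}, and you are right that this cancellation is global while the estimate of Lemma \ref{lm:errbp} is local.

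There is, however, one step that fails as you state it: the elementwise bound $\|e_h\|_{0,E}\lesssim \epsilon^{-1/2}\|e_h\|_{{\rm supg},E}$ cannot be obtained from any scaled Poincar\'e inequality on $E$. The norm $\|\cdot\|_{{\rm supg},E}$ defined in \eqref{eq:loc_nom_def} contains only gradient-type quantities, so every constant function on $E$ satisfies $\|e_h\|_{{\rm supg},E}=0$ while $\|e_h\|_{0,E}\neq 0$; since $e_h$ restricted to a single element has neither zero mean on $E$ nor zero trace on $\partial E$, the hypotheses of a local Poincar\'e inequality are simply not available. The paper's (correct) remedy is to work globally: first apply discrete Cauchy--Schwarz to
\begin{equation*}
\sum_{E\in\Omega_h}|\bb|_{[W^{\reg+1}_{\infty}(E)]^2}\,h_E^{2\reg+1}\,|u|_{\reg+1,E}\,\|e_h\|_{0,E}
\leq
\Bigl(\sum_{E\in\Omega_h}\beta_E^2\,\bigl|\bb/\beta_E\bigr|^2_{[W^{\reg+1}_{\infty}(E)]^2}\,h_E^{2(2\reg+1)}\,|u|^2_{\reg+1,E}\Bigr)^{1/2}\|e_h\|_{0,\Omega}\,,
\end{equation*}
and only then invoke the Poincar\'e inequality on the whole domain $\Omega$, which is legitimate because $e_h\in H^1_0(\Omega)$, giving $\|e_h\|_{0,\Omega}\lesssim\|\nabla e_h\|_{0,\Omega}\leq \epsilon^{-1/2}\|e_h\|_{{\rm supg}}$. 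This yields the same $\beta_E^2\,\epsilon^{-1}\,h_E^{2(2\reg+1)}$ contribution you announce, so your proposal is repaired simply by moving the Poincar\'e step from the element level to the domain level; the remaining bookkeeping in your argument is correct and matches the paper.
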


\begin{proof}
The proof is a direct consequence of Proposition \ref{prp:abstract}, Lemmas \ref{lm:epi}, \ref{lm:errA}, \ref{lm:errB}, \ref{lm:errL}, and \ref{lm:errbp} where \\
(1) we made use of $\sigma_E = \beta_E \lambda_E$, \\
(2) we wrote $|\bb|^2_{[W^1_{\infty}(E)]^2} = \beta_E^2 (|\bb|_{[W^1_{\infty}(E)]^2}/\beta_E)^2$ \\
(3) we estimated the last two terms in \eqref{eq:errbpstima} as follows. \\
The penultimate term is bounded using the Poincar\'e inequality on the domain $\Omega$ 
\[
\begin{aligned}
& \sum_{E \in \Omega_h}|\bb|_{[W^{\reg+1}_{\infty}(E)]^2}
h_E^{2\reg+1} |u|_{\reg+1,E} \|e_h\|_{0,E} 
\leq
\\
& \leq
\left( \sum_{E \in \Omega_h} \beta_E^2 \: |\bb/\beta_E|^2_{[W^{\reg+1}_{\infty}(E)]^2}
h_E^{2(2\reg+1)} |u|^2_{\reg+1,E} \right)^{1/2} 
\left( \sum_{E \in \Omega_h} \|e_h\|^2_{0,E}
\right)^{1/2} 
\\
& \lesssim
\left( \sum_{E \in \Omega_h} \beta_E^2
h_E^{2(2\reg+1)} |u|^2_{\reg+1,E} \right)^{1/2} 
\|e_h\|_{0, \Omega}
\\
& \lesssim
\left( \sum_{E \in \Omega_h} \beta_E^2
h_E^{2(2\reg+1)} |u|^2_{\reg+1,E} \right)^{1/2} 
\|\nabla e_h\|_{0,\Omega}
\\
&\lesssim
\left( \sum_{E \in \Omega_h} \beta_E^2
\frac{h_E^{2(2\reg+1)}}{\epsilon} |u|^2_{\reg+1,E} \right)^{1/2} 
\|e_h\|_{\supg} \,.
\end{aligned}
\]
For the last term, noticing that $\eint, e_h \in V$, it holds that
\begin{equation}
\label{eq:boundarycan}
\sum_{E \in \Omega_h} \int_{\partial E} (\bb \cdot \nn^E) \eint e_h \, {\rm d}s = 0 \,.
\end{equation}
\end{proof}

\begin{proposition}
\label{prp:bfb}
Under the assumptions \textbf{(A1)} and \textbf{(A2)}, let $u \in V$ be the
solution of equation \eqref{eq:problem-c} and $u_h \in V_h(\Omega_h)$ be the solution of equation \eqref{eq:supg-vem} obtained with the bilinear form $\bfb(\cdot, \cdot)$ in \eqref{eq:bfb}.
Then it holds that
\[
\begin{aligned}
\|u - u_h\|^2_{\supg} &\lesssim
\sum_{E \in \Omega_h} \Theta^E_{\partial}
\left(
h^{2\reg}_E 
 (\epsilon +  \tau_E \beta_E^2 + \tau_E) + 
\lambda^2_E h_E^{2(\reg+2)} +
\beta_E^2 \lambda^2_E h_E^{2(\reg+1)}  +
\tau_E \epsilon^2   h_E^{2(\reg-1)} \right)
\end{aligned}
\]
where the constant $\Theta^E_{\partial}$ depends on
$\|u\|_{s+1,E}$, $\|f\|_{s+1,E}$, $\frac{\|\beta\|_{[W^{s+1}_{\infty}(E)]^2}}{\beta_E}$.
\end{proposition}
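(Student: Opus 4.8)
The plan is to follow verbatim the assembly used for Proposition \ref{prp:bfp}, merely substituting Lemma \ref{lm:errbb} for Lemma \ref{lm:errbp} in the treatment of the convective consistency term $\errb$. I would start from the abstract splitting \eqref{eq:abstract} of Proposition \ref{prp:abstract}, which controls $\|u-u_h\|^2_{\supg}$ by $\|\eint\|^2_{\supg}$ plus the elementwise sum of $\errF+\erra+\errb+\errB+\errL$. The interpolation term $\|\eint\|^2_{\supg}$ is bounded directly by Lemma \ref{lm:epi}, already supplying the $h^{2\reg}_E(\epsilon+\tau_E\beta_E^2)$ summand. Each of the five consistency terms is then estimated by its dedicated lemma, so that it appears as a product (quantity)$_E\cdot\|e_h\|_{\supg,E}$.

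The combination is standard: summing over $E$ and applying Cauchy--Schwarz factors out $\|e_h\|_{\supg}$, and a Young inequality lets me reabsorb that factor into the left-hand side, leaving the sum of the squared quantities. Tracking the provenance of each summand, Lemma \ref{lm:errA} yields the $\epsilon\,h^{2\reg}_E$ part, Lemma \ref{lm:errB} the $\tau_E\beta_E^2 h^{2\reg}_E$ part, Lemma \ref{lm:errF} both the $\lambda^2_E h_E^{2(\reg+2)}$ term and, via its second contribution, the $\tau_E h^{2\reg}_E$ part, Lemma \ref{lm:errL} the $\tau_E\epsilon^2 h_E^{2(\reg-1)}$ term, and Lemma \ref{lm:errbb}, using $\sigma_E^2=\beta_E^2\lambda_E^2$ from \eqref{eq:sigma}, the $\beta_E^2\lambda_E^2 h_E^{2(\reg+1)}$ term. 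All the normalized regularity factors $\|u\|_{\reg+1,E}$, $\|f\|_{\reg+1,E}$ and $\|\bb\|_{[W^{\reg+1}_\infty(E)]^2}/\beta_E$ are then absorbed into the constant $\Theta^E_\partial$.

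The one nonroutine point is the residual boundary contribution $\int_{\partial E}(\bb\cdot\nn^E)\eint e_h\,{\rm d}s$ produced by Lemma \ref{lm:errbb}. Since both $\eint$ and $e_h$ belong to $V=H^1_0(\Omega)$, these edge integrals cancel in pairs across interior edges (opposite normals, continuous traces) and vanish on $\partial\Omega$, so their global sum is zero by identity \eqref{eq:boundarycan}; hence they drop out without generating any further term.

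I do not expect a genuine obstacle within this proposition itself: all the difficulty has been front-loaded into Lemma \ref{lm:errbb}. The reason the resulting bound is strictly cleaner than that of Proposition \ref{prp:bfp} is that Lemma \ref{lm:errbb} has no analogue of the terms $\ebC$ and $\ebD$ appearing in the proof of Lemma \ref{lm:errbp}: those two, bounded through a Poincar\'e inequality on $\Omega$, are precisely what forced the two $\epsilon$-unfavourable summands $\beta_E^2 h_E^{2(\reg+2)}/\epsilon$ and $\beta_E^2 h_E^{2(2\reg+1)}/\epsilon$ into the estimate for $\bfp$. Their absence here is exactly what delivers full robustness in $\epsilon$, the boundary-corrected form \eqref{eq:bfb} having been designed to avoid the projection mismatch that the standard form \eqref{eq:bfp} suffers.
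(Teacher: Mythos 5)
Your proposal is correct and follows essentially the same route as the paper, whose proof is precisely the one-line assembly of Proposition \ref{prp:abstract}, Lemmas \ref{lm:epi}, \ref{lm:errA}, \ref{lm:errB}, \ref{lm:errL} and \ref{lm:errbb}, together with the cancellation \eqref{eq:boundarycan} of the boundary terms; you even state the relation $\sigma_E^2=\beta_E^2\lambda_E^2$ more accurately than the paper's ``$\sigma_E=\beta_E^2\lambda_E^2$''. Your closing diagnosis of why the $\epsilon^{-1}$-polluted summands of Proposition \ref{prp:bfp} disappear (the absence of analogues of $\ebC$ and $\ebD$ from Lemma \ref{lm:errbp}) matches the paper's Remark \ref{rem:chal}.
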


\begin{proof}
The proof follows from Proposition \ref{prp:abstract}, Lemmas \ref{lm:epi}, \ref{lm:errA}, \ref{lm:errB}, \ref{lm:errL}, \ref{lm:errbb} and equation \eqref{eq:boundarycan}, also recalling that $\sigma_E = \beta_E^2 \lambda^2_E$.
\end{proof}

It is well known that in order to obtain a stable and optimal convergent method both in the convective and in the diffusion dominated regime the SUPG parameter $\tau_E$ has to be chosen in accordance with
\[
\tau_E \simeq \min \left\{ \frac{h_E}{\beta_E}, \frac{h_E^2}{\epsilon} \right\} \,.
\]
Let us analyse the asymptotic order of convergence for the two versions of VEM scheme in both regimes (where we recall $\beta_E \lesssim 1$ for all $E \in \Omega_h$ due to the scaling choice {\bf (A0)}).

\noindent
$\bullet$ \texttt{convection dominated regime} $\epsilon \ll h_E \beta_E$:   
$\tau_E = \beta_E^{-1} h_E$,  
$\lambda^2_E = \beta_E^{-1} h_E^{-1}$,

\begin{itemize}
\item [-] \texttt{form} $\bfp(\cdot, \cdot)$
\[
\begin{aligned}
\|u - u_h\|^2_{\supg} &\lesssim
\sum_{E \in \Omega_h} 
\left(
h^{2\reg+1}_E 
 (\beta_E +  \beta_E^{-1}) + 
\beta_E^{-1} h_E^{2\reg+3} +
\epsilon h^{2\reg}_E  +
\beta_E^{-1} \epsilon^2   h_E^{2\reg-1}  + \right.
\\
\bigl .
& \qquad +\beta_E^2 \epsilon^{-1}  h_E^{2(\reg+2)} +  \beta_E^2 \epsilon^{-1}  h_E^{2(2\reg+1)} \bigr)
= O \left(h^{2\reg+1} ( 1 + \epsilon^{-1}  h^3) \right)\,;
\end{aligned}
\]
\item [-] \texttt{form} $\bfb(\cdot, \cdot)$
\[
\begin{aligned}
\|u - u_h\|^2_{\supg} &\lesssim
\sum_{E \in \Omega_h} 
\left(
h^{2\reg+1}_E 
 (\beta_E +  \beta_E^{-1}) + 
\beta_E^{-1} h_E^{2\reg+3} +
\epsilon h^{2\reg}_E  +
\beta_E^{-1} \epsilon^2   h_E^{2\reg-1} \right) = O(h^{2\reg+1})\,;
\end{aligned}
\]
\end{itemize}

\noindent
$\bullet$ \texttt{diffusion dominated regime} $\beta_E h_E \ll \epsilon$:
$\tau_E = h_E^2\epsilon^{-1}$,
$\lambda^2_E = \epsilon^{-1}$, 

\begin{itemize}
\item [-] \texttt{form} $\bfp(\cdot, \cdot)$
\[
\begin{aligned}
\|u - u_h\|^2_{\supg} &\lesssim
\sum_{E \in \Omega_h} 
\left(
 \epsilon h^{2\reg}_E +
\beta_E^2\epsilon^{-1} h_E^{2(\reg +1)} +
\epsilon^{-1} h_E^{2(\reg +2)}
\right)
= O(\epsilon h^{2\reg} ) \,;
\end{aligned}
\]
\item [-] \texttt{form} $\bfb(\cdot, \cdot)$
\[
\begin{aligned}
\|u - u_h\|^2_{\supg} &\lesssim
\sum_{E \in \Omega_h} 
\left(
\epsilon h^{2\reg}_E +
\beta_E^2\epsilon^{-1} h_E^{2(\reg +1)} +
\epsilon^{-1} h_E^{2(\reg +2)}
\right)
= O(\epsilon h^{2\reg} ) \,.
\end{aligned}
\]
\end{itemize}
We conclude that in the diffusion dominated regime both schemes yield the optimal rate of convergence.
In the convection dominated regime only the scheme derived from the bilinear forms $\bfb(\cdot, \cdot)$ has the optimal rate of convergence. For the scheme derived from $\bfp(\cdot, \cdot)$ the error is polluted by $\epsilon^{-1}$. Nevertheless, we stress that such a factor appears in front of the ``higher'' order term $h^3$, 
therefore the influence of the diffusion coefficient is strongly reduced.

\section{Numerical experiments}

In this section we numerically validate the proposed methods by means of the following model problem.

\paragraph{Model problem.}
We consider a family of problems in the unit square $\Omega=(0,\,1)^2$, one per each choice of the parameter $\epsilon$. We select the advection term as
$$
\bb(x,\,y) := \left[\begin{array}{r}
-2\,\pi\,\sin(\pi\,(x+2\,y))\\
\pi\,\sin(\pi\,(x+2\,y))
\end{array}\right]\,.
$$

We choose the boundary conditions and the source term (which turns out to depend on $\epsilon$) in such a way that the analytical solution is always the function
$$
u(x,\,y) := \sin(\pi\,x)\sin(\pi\,y)\,.
$$

Guided by the definition of the $||\cdot||_\supg$ norm (cf. \eqref{eq:loc_nom_def} and \eqref{eq:glb_norm_def}), by the error estimates of Propositions \ref{prp:bfp}--\ref{prp:bfb}, and noticing that the discrete solution $u_h\in V_h(\Omega_h)$ is not explicitly pointwise available, the following error quantities will be considered.   

\begin{itemize}
 \item \textbf{$H^1-$seminorm error}
 $$
 e_{H^1} := \sqrt{\sum_{E\in\mathcal{T}_h}\left\|\nabla(u-\Pi_k^\nabla u_h)\right\|^2_{0,E}}\,;
 $$
 
 \item \textbf{convective norm error}
 $$
 e_{\mathcal{C}} := \sqrt{\sum_{E\in\mathcal{T}_h}\left(\epsilon\,\bigg\| \nabla(u-\Pi_k^\nabla u_h)\bigg\|^2_{0,E} + 
 \tau_E\bigg\|\bb\cdot\nabla(u-\Pi_k^\nabla u_h)\bigg\|^2_{0,E}\right)}\,.
 $$
\end{itemize}

\noindent
As far as the mesh types are concerned, we take the following:
\begin{itemize}
 \item \texttt{quad:} a mesh composed by structured quadrilaterals;
 \item \texttt{tria:} a Delaunay triangulation of the unit square;
 \item \texttt{voro:} a centroidal Voronoi tessellation of the unit square where the cells' shape is optimized via a Lloyd algorithm
 \item \texttt{rand:} a voronoi tessellation of the unit square where the cell shapes are not optimized.
\end{itemize}
In Figure~\ref{fig:meshes} we show an example of such meshes, and we also remark that the former two types can be used in connection with a standard finite element procedure, contrary to the latter two.


\begin{figure}[!htb]
\begin{center}
\begin{tabular}{ccc}
\texttt{quad} & &\texttt{tria} \\
\includegraphics[width=0.35\textwidth]{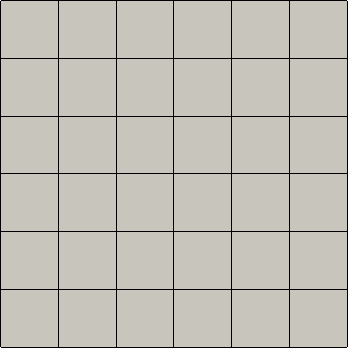} &\phantom{mm}&
\includegraphics[width=0.35\textwidth]{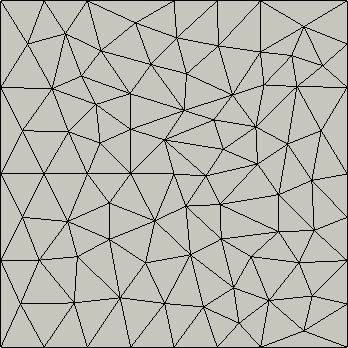} \\
\texttt{voro} & &\texttt{rand} \\
\includegraphics[width=0.35\textwidth]{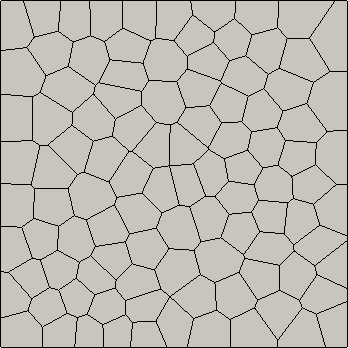} &\phantom{mm}&
\includegraphics[width=0.35\textwidth]{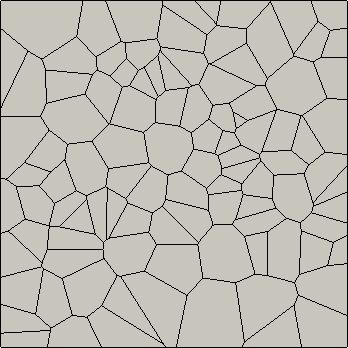} \\
\end{tabular}
\end{center}
\caption{Example of meshes used for the present test case.}
\label{fig:meshes}
\end{figure}

\noindent Moreover, since we are interested in a robustness analysis with respect to the diffusion parameter, for each mesh sequence we take 
$$
\epsilon = 10^{-3}\ \text{and}\  10^{-6}.
$$

\paragraph{Effect of the SUPG stabilization.} Before assessing the convergence properties of the proposed methods, we check the effect of inserting the SUPG term in the variational formulation of the problem.  Here we focus on the first choice for the convective bilinear form, without the algebraical skew-symmetrization; namely we here use form \eqref{eq:bfp}. 
 
\begin{figure}[!htb]
\begin{center}
\begin{tabular}{cc}
\multicolumn{2}{c}{\texttt{SUPG}} \\
\includegraphics[width=0.47\textwidth]{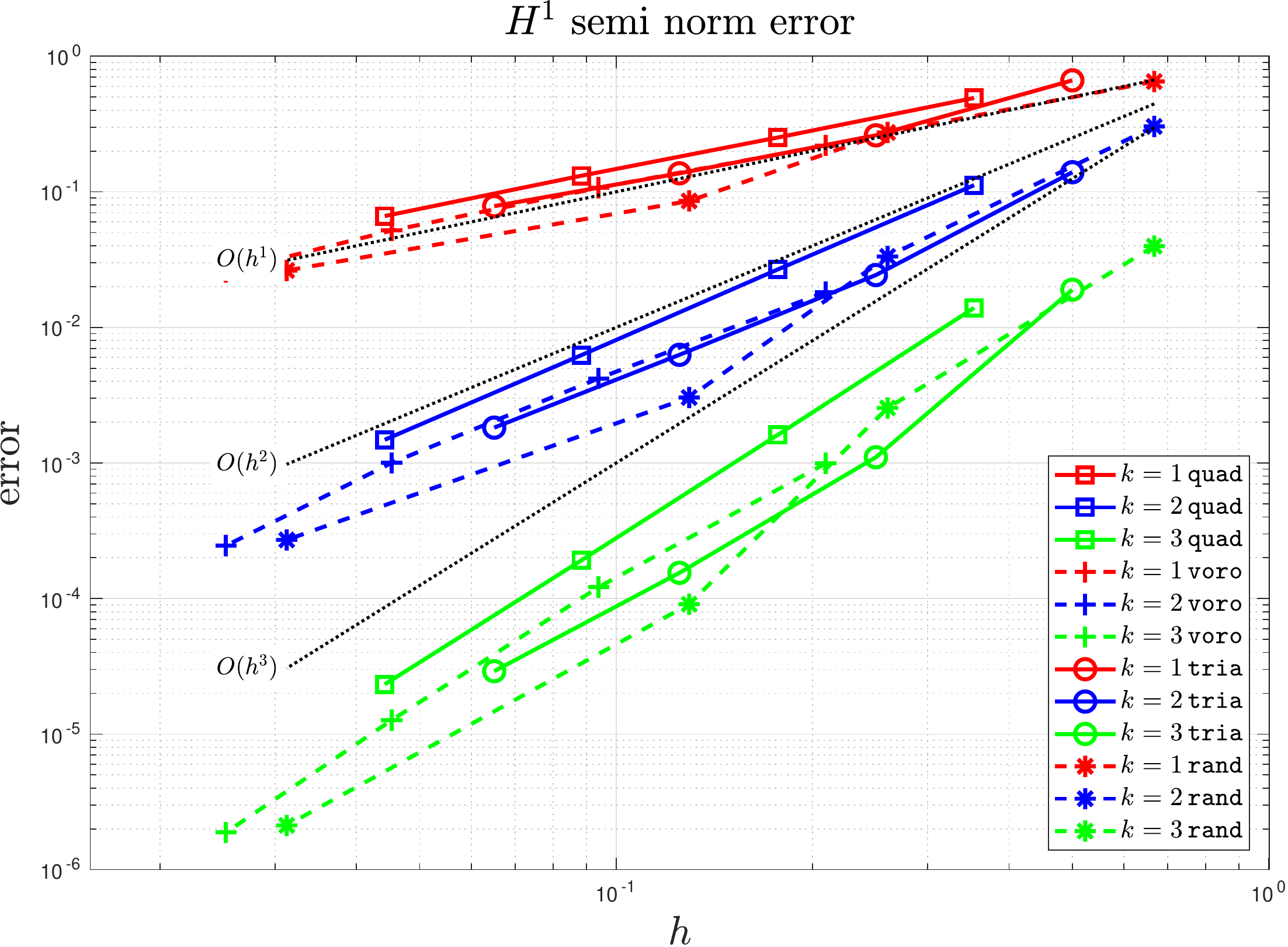} &
\includegraphics[width=0.47\textwidth]{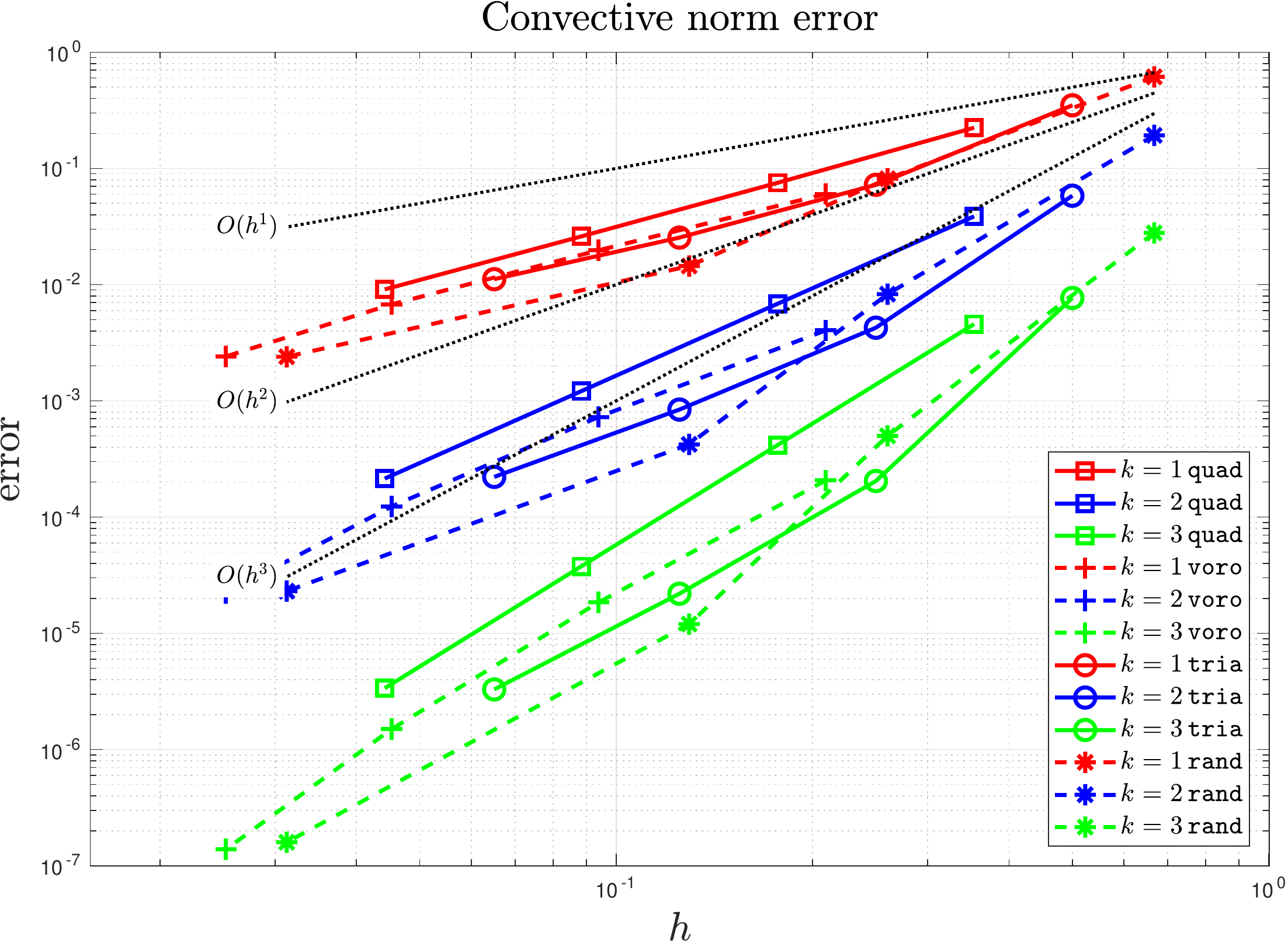} \\
\multicolumn{2}{c}{\texttt{NONE}} \\
\includegraphics[width=0.47\textwidth]{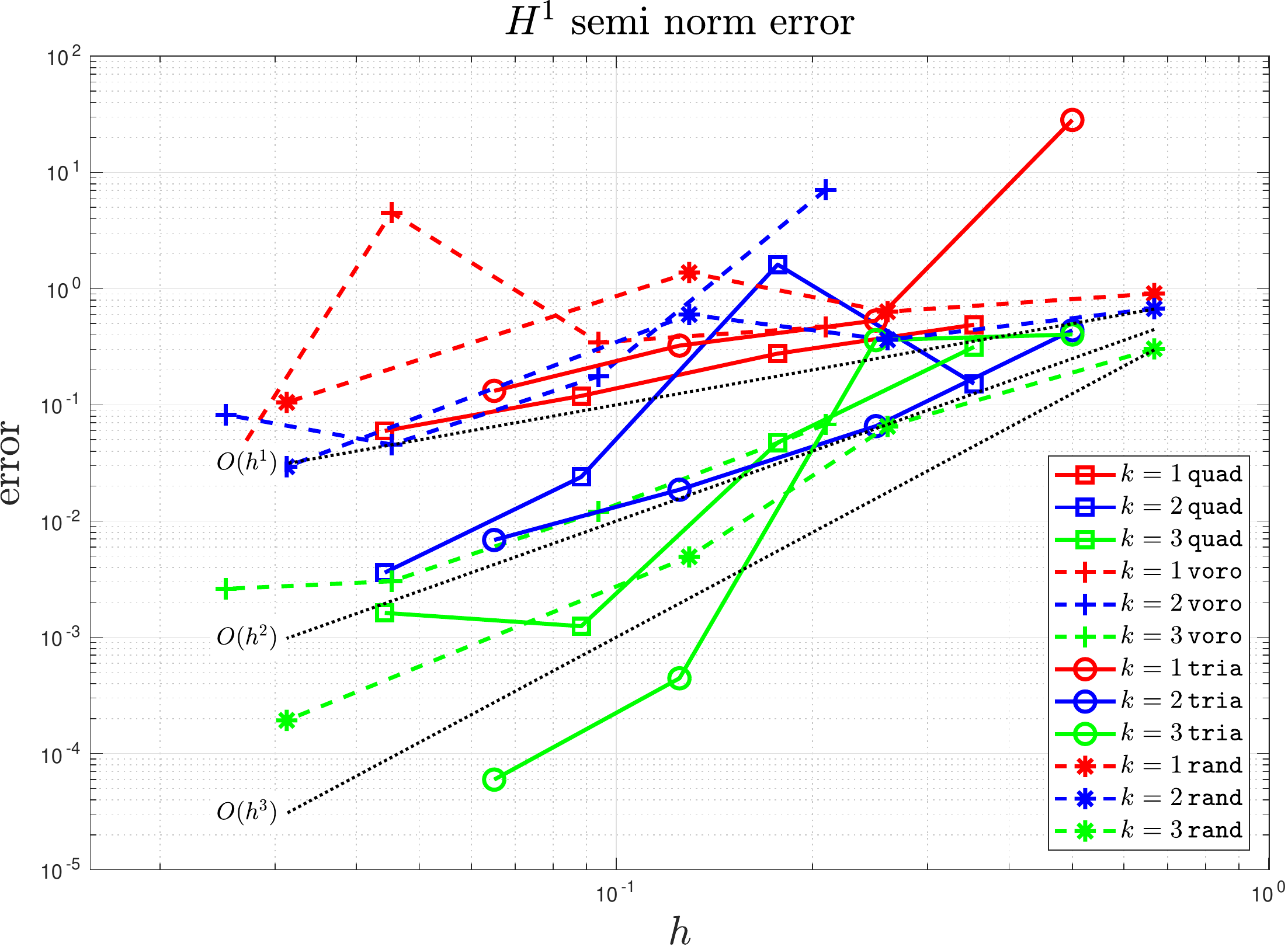} &
\includegraphics[width=0.47\textwidth]{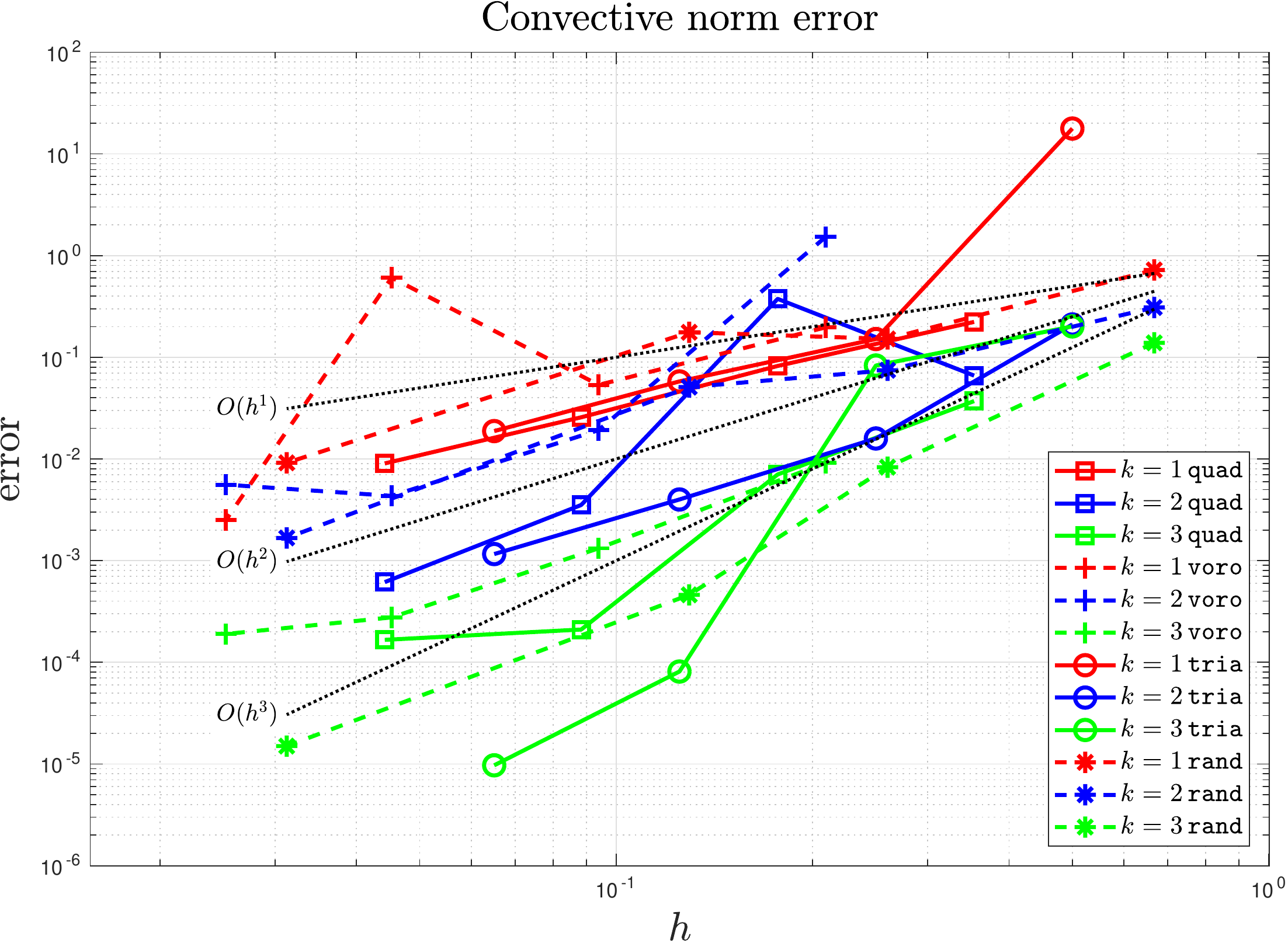} \\
\end{tabular}
\end{center}
\caption{Effect of the SUPG term on the convergence histories: the case $\epsilon=1e-03$.}
\label{fig:convKappaM3}
\end{figure}

\begin{figure}[!htb]
\begin{center}
\begin{tabular}{cc}
\multicolumn{2}{c}{\texttt{SUPG}} \\
\includegraphics[width=0.47\textwidth]{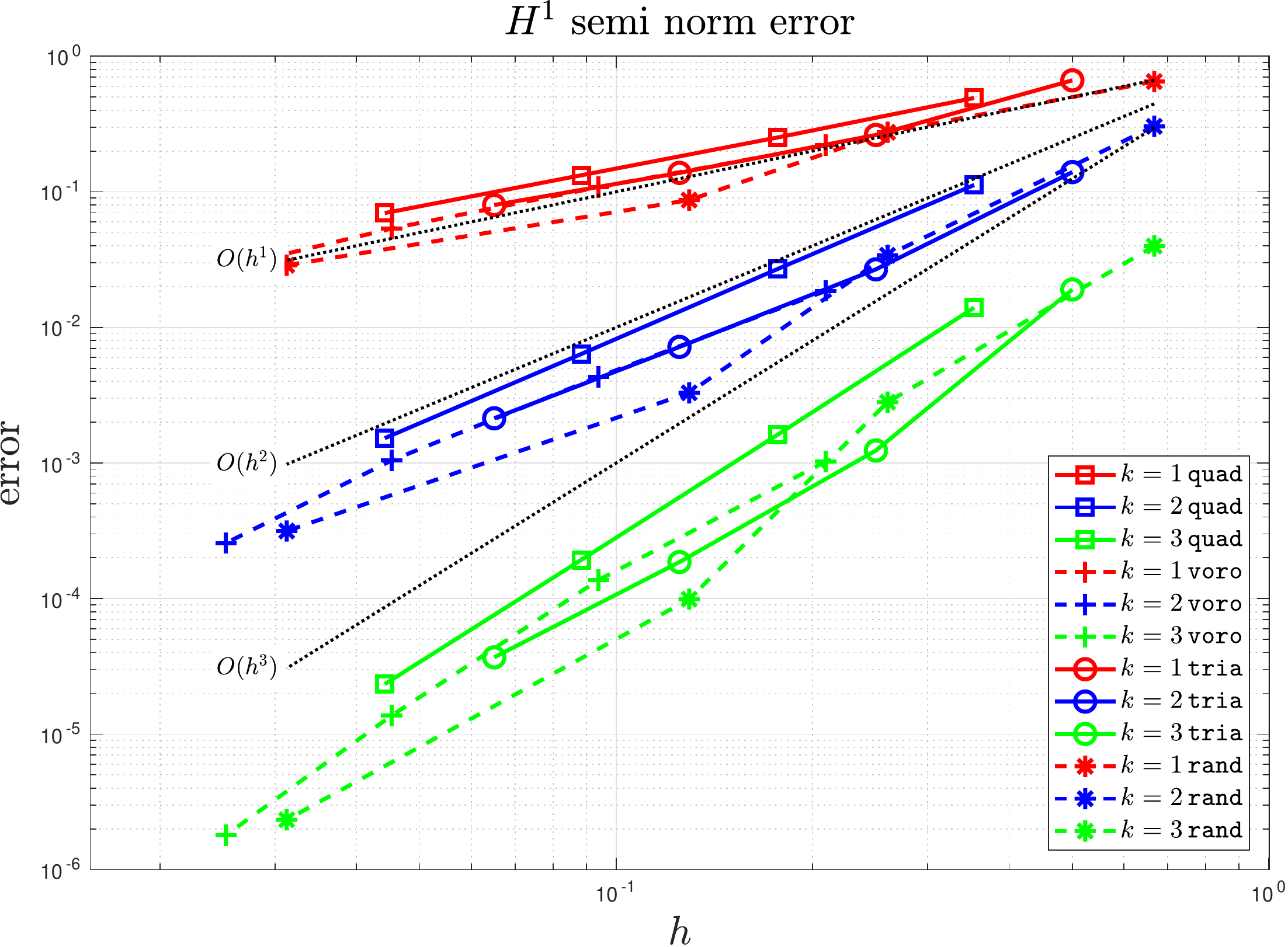} &
\includegraphics[width=0.47\textwidth]{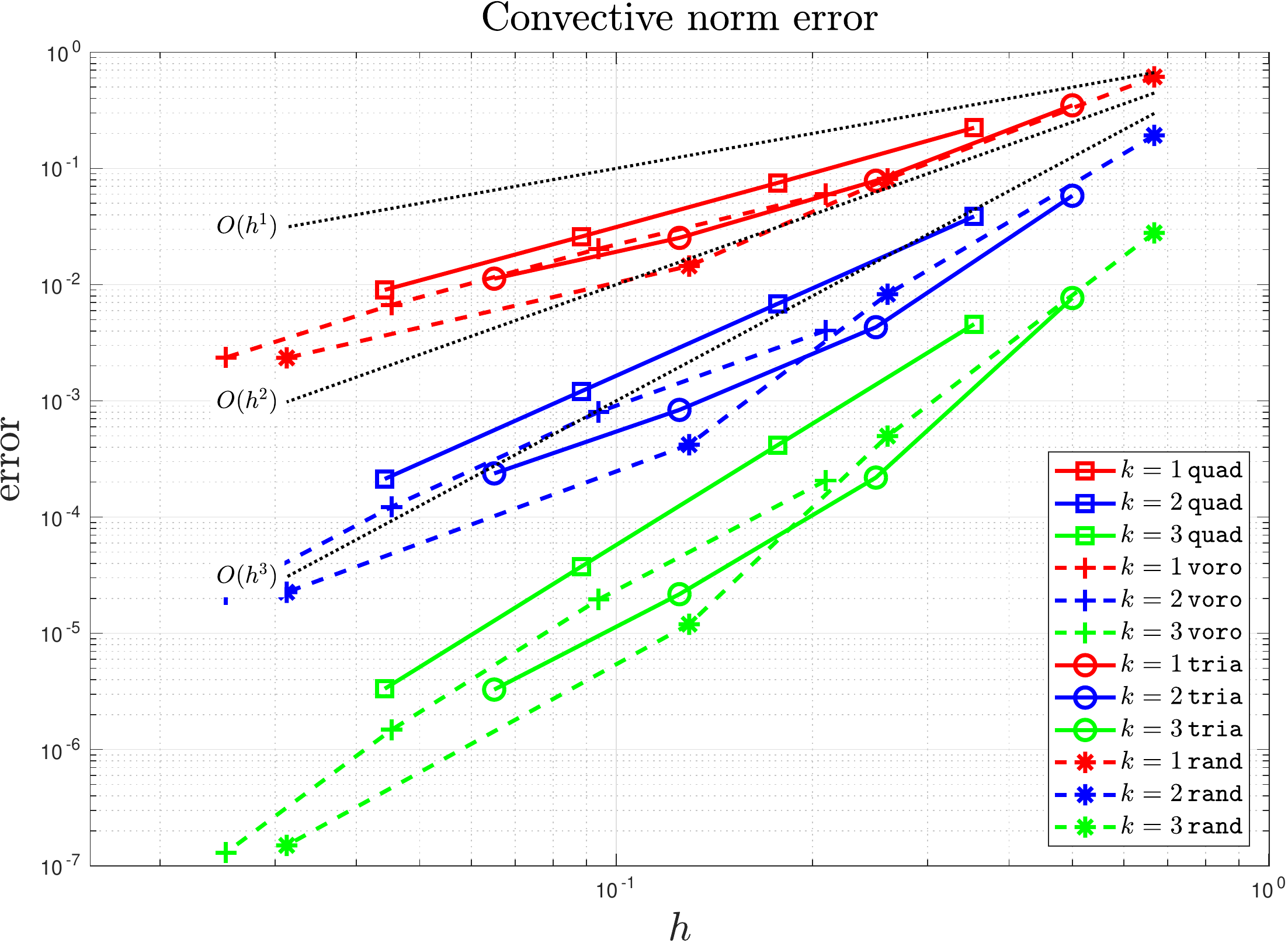} \\
\multicolumn{2}{c}{\texttt{NONE}} \\
\includegraphics[width=0.47\textwidth]{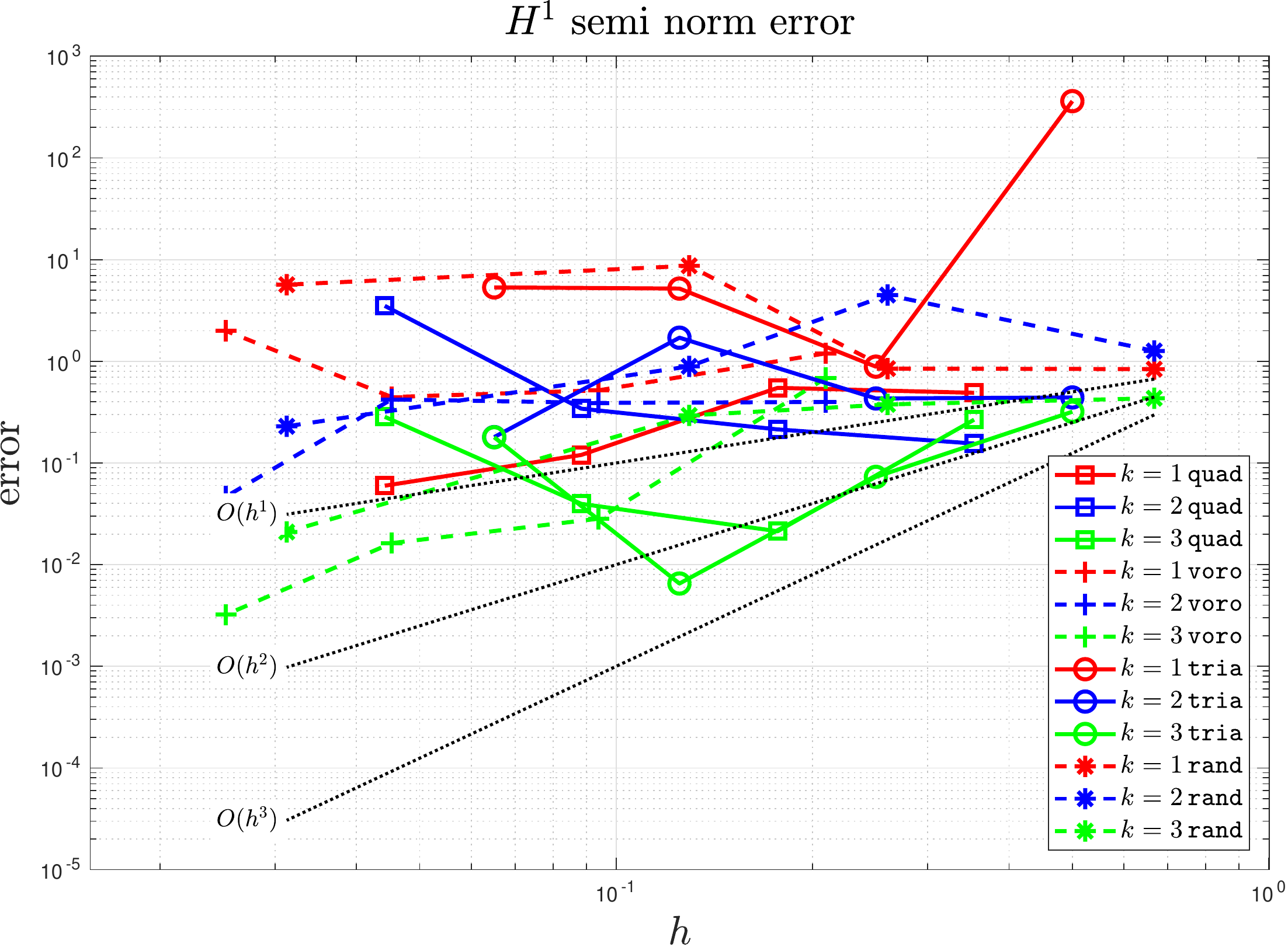} &
\includegraphics[width=0.47\textwidth]{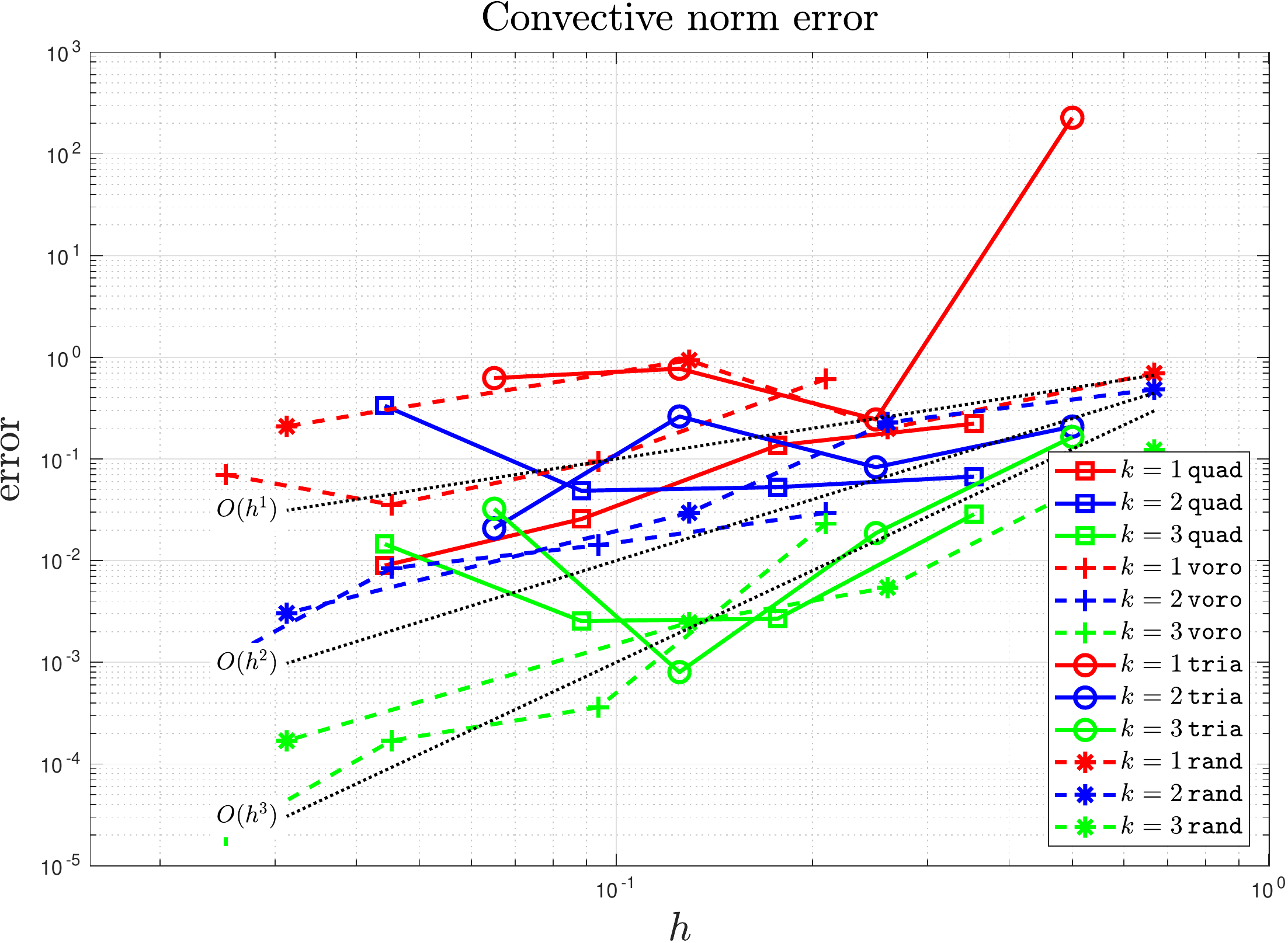} \\
\end{tabular}
\end{center}
\caption{Effect of the SUPG term on the convergence histories: the case $\epsilon=1e-06$.}
\label{fig:convKappaM6}
\end{figure}

In Figures~\ref{fig:convKappaM3} and~\ref{fig:convKappaM6} we show the convergence graphs for different 
approximation degrees $k$ and for each type of the meshes. The cases with and without the SUPG term are labelled as \texttt{SUPG} and \texttt{NONE}, respectively. As expected, dropping the SUPG term clearly deteriorates the quality of the discrete solutions for all approximstion degree $k$, the degeneration being heavier for smaller $\epsilon$. We do not report the results for ``large'' $\epsilon$, but we have experienced that, of course, both the \texttt{SUPG} and the \texttt{NONE} approaches give similar outcomes, attaining the correct convergence rate. 

Incidentally, we remark that Figures~\ref{fig:convKappaM3} and~\ref{fig:convKappaM6} also display the robustness of the present SUPG virtual element approach with respect to element shape and distortion. In fact, given an approximation degree $k$, the convergence histories are rather similar for all the mesh types, even though the \texttt{rand} meshes contain some odd shaped elements.

\paragraph{Different discrete convective bilinear forms.}
We now present the numerical results for different variants of the convective term approximation. 
More precisely, we consider four types of (local) discrete convective terms: 
$b_{o,h}^E(\cdot,\cdot)$ and $b_{\partial,h}^E(\cdot,\cdot)$, and their skew-symmetric counterparts, see \eqref{eq:bfp}, \eqref{eq:bfb} and \eqref{eq:bskewEh}.
We refer to such approximations as \texttt{orig} and \texttt{boun}, respectively, while 
we label their skew-symmetric counterparts as \texttt{origSkew} and \texttt{bounSkew} (the ones covered by the theoretical analysis developed in Section \ref{sec:theory}). 

To avoid instabilities in the convection dominated regime, in what follows we always take advantage of the SUPG stabilization.

In Figures~\ref{fig:convKappaM3Bdiff} and~\ref{fig:convKappaM6Bdiff} 
we collect the results with the diffusion coefficients $\epsilon=10^{-3}$ and $\epsilon=10^{-6}$, respectively.
We notice that the convergence rate of both error norms $e_{\mathcal{C}}$ and $ e_{H^1}$ are the expected ones, and they are robust in the parameter $\epsilon$.
When we consider the $H^1$-seminorm error $e_{H^1}$, we observe that the \texttt{origSkew} behaves slightly worst than the other three discrete forms, especially for the approximation degrees $k=2,3$, and independently of the values of $\epsilon$. 

\begin{figure}[!htb]
\begin{center}
\begin{tabular}{cc}
\multicolumn{2}{c}{\texttt{quad}} \\
\includegraphics[width=0.47\textwidth]{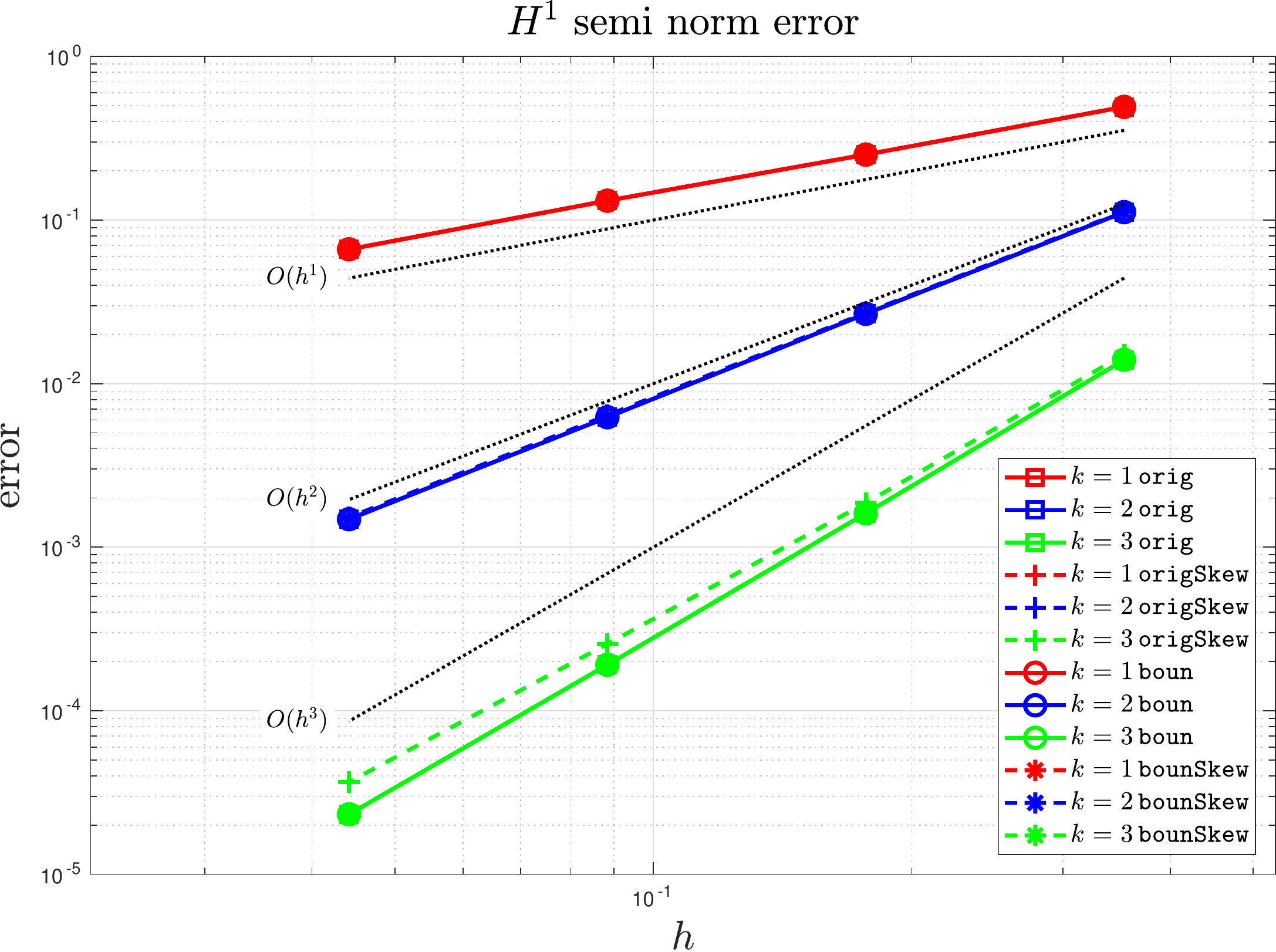} &
\includegraphics[width=0.47\textwidth]{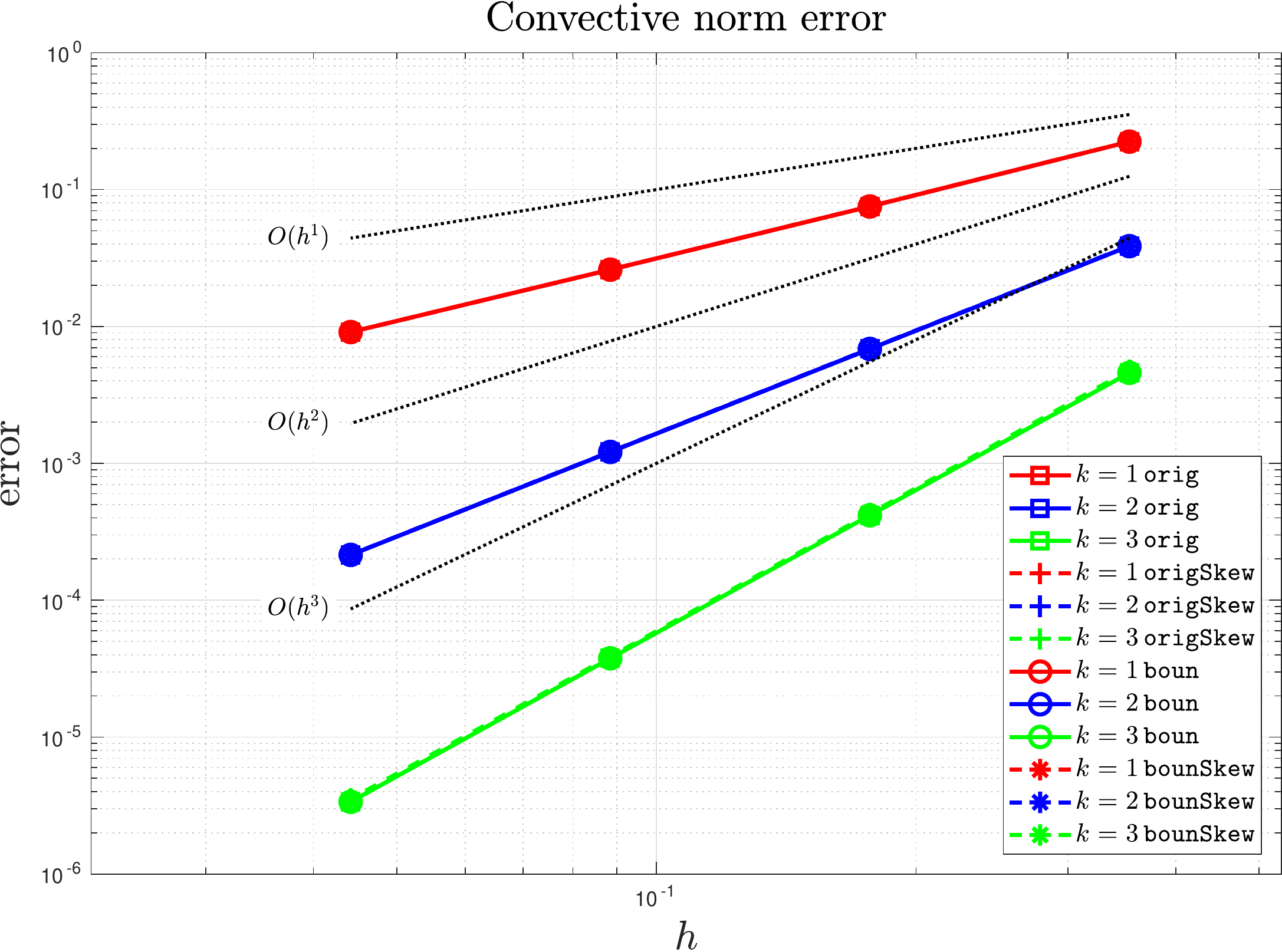} \\
\multicolumn{2}{c}{\texttt{tria}} \\
\includegraphics[width=0.47\textwidth]{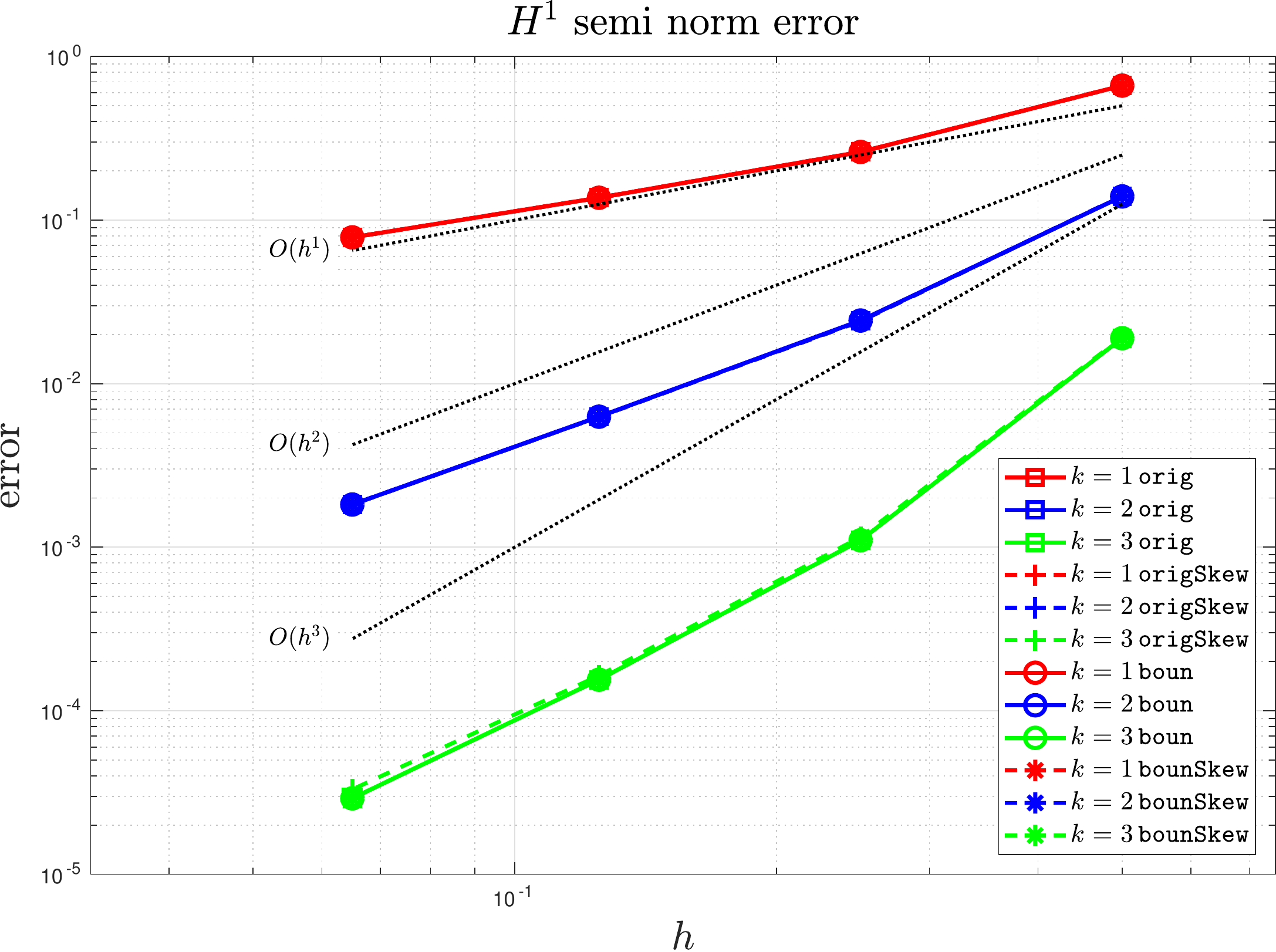} &
\includegraphics[width=0.47\textwidth]{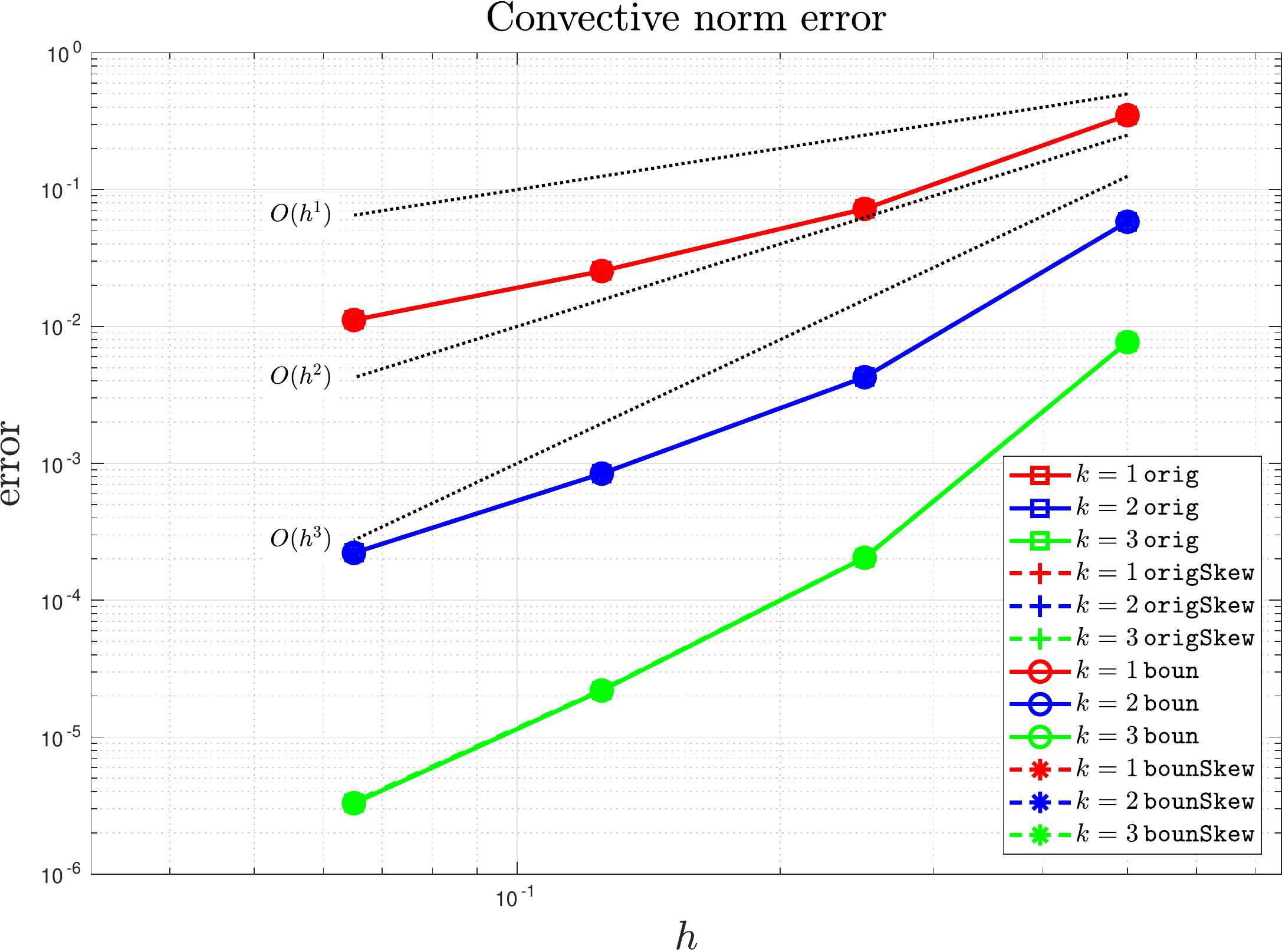} \\
\multicolumn{2}{c}{\texttt{voro}} \\
\includegraphics[width=0.47\textwidth]{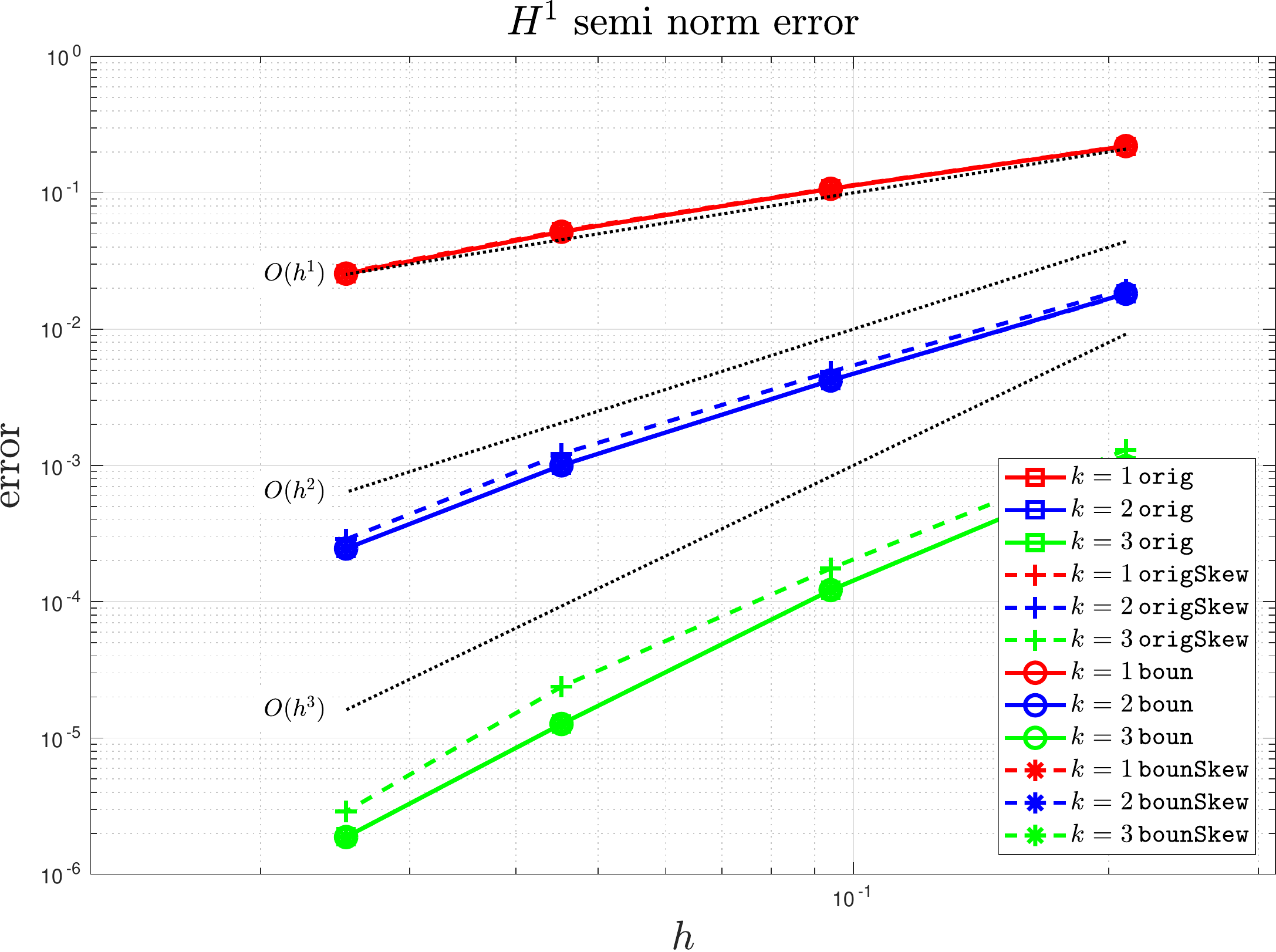} &
\includegraphics[width=0.47\textwidth]{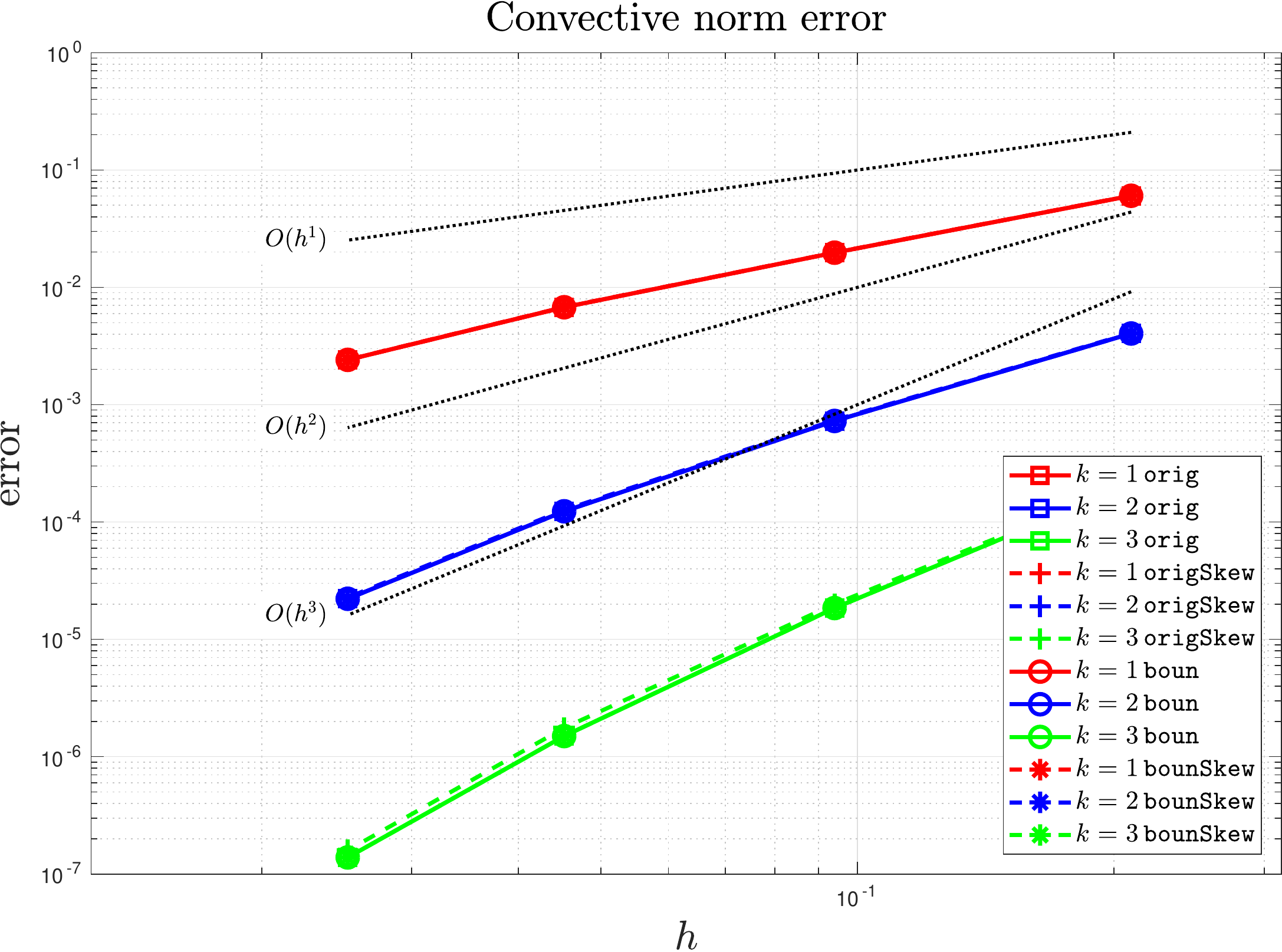} \\
\multicolumn{2}{c}{\texttt{rand}} \\
\includegraphics[width=0.47\textwidth]{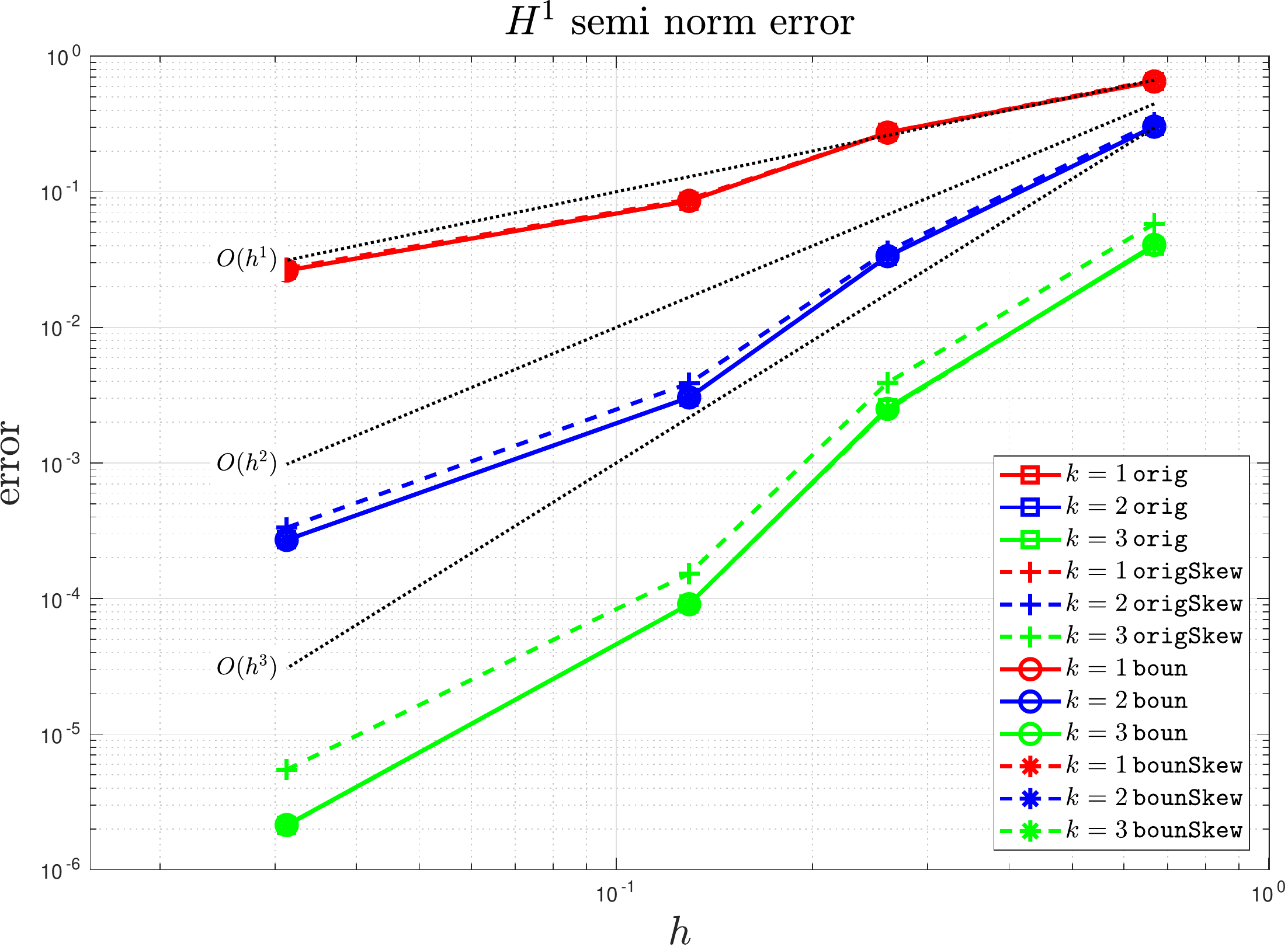} &
\includegraphics[width=0.47\textwidth]{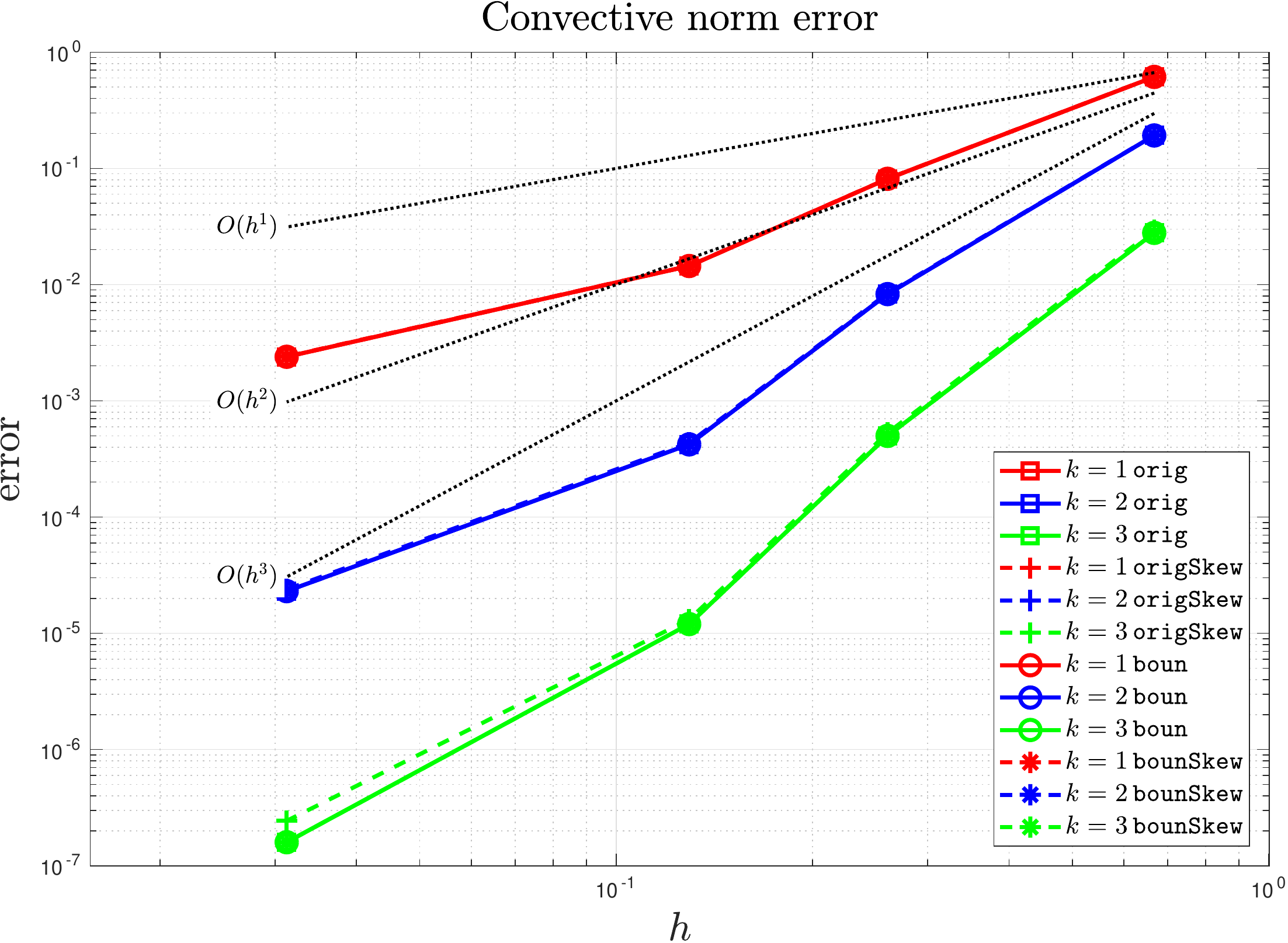} \\
\end{tabular}
\end{center}
\caption{Convergence lines for the case $\epsilon=1e-03$}
\label{fig:convKappaM3Bdiff}
\end{figure}

\begin{figure}[!htb]
\begin{center}
\begin{tabular}{cc}
\multicolumn{2}{c}{\texttt{quad}} \\
\includegraphics[width=0.47\textwidth]{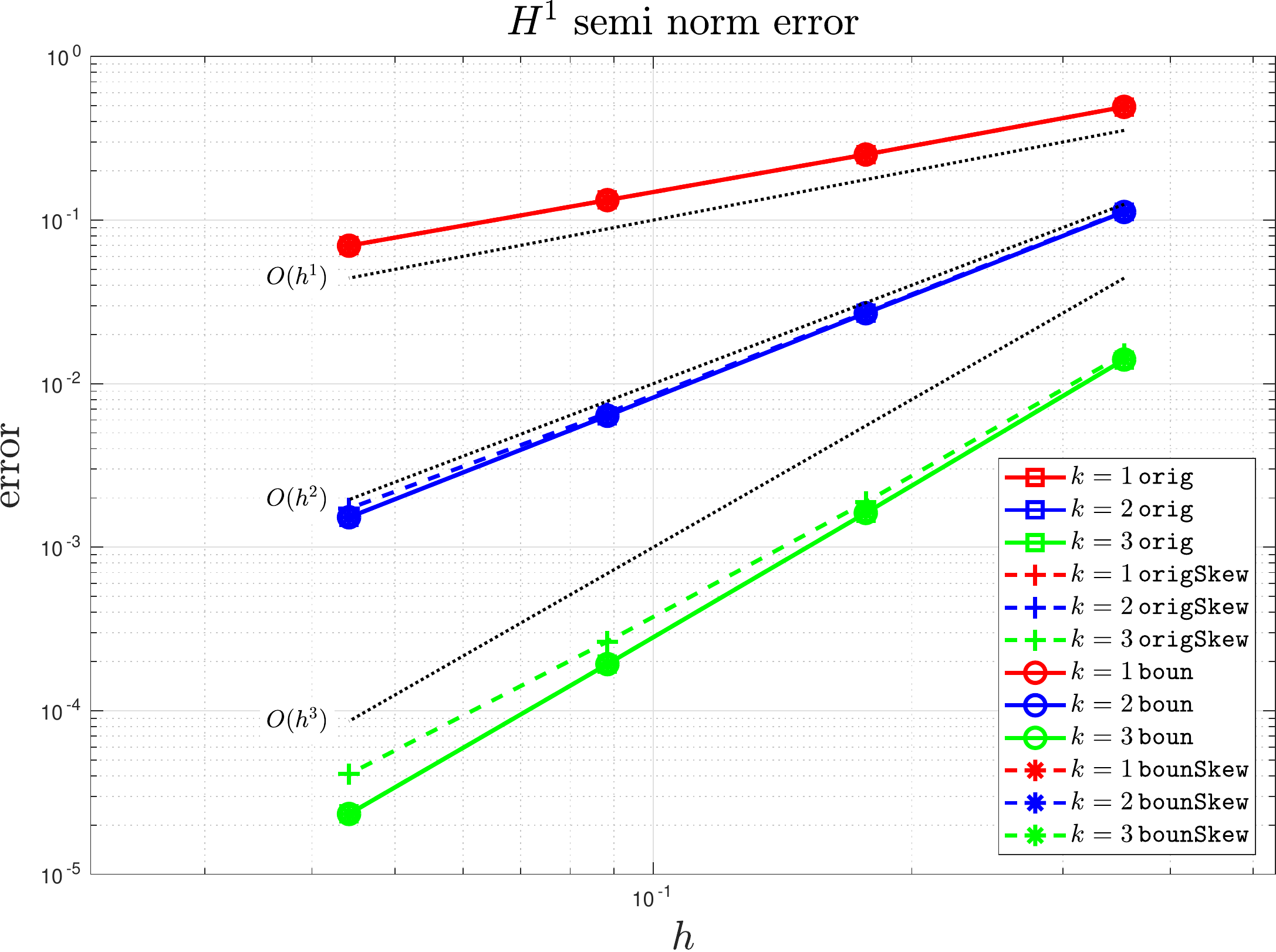} &
\includegraphics[width=0.47\textwidth]{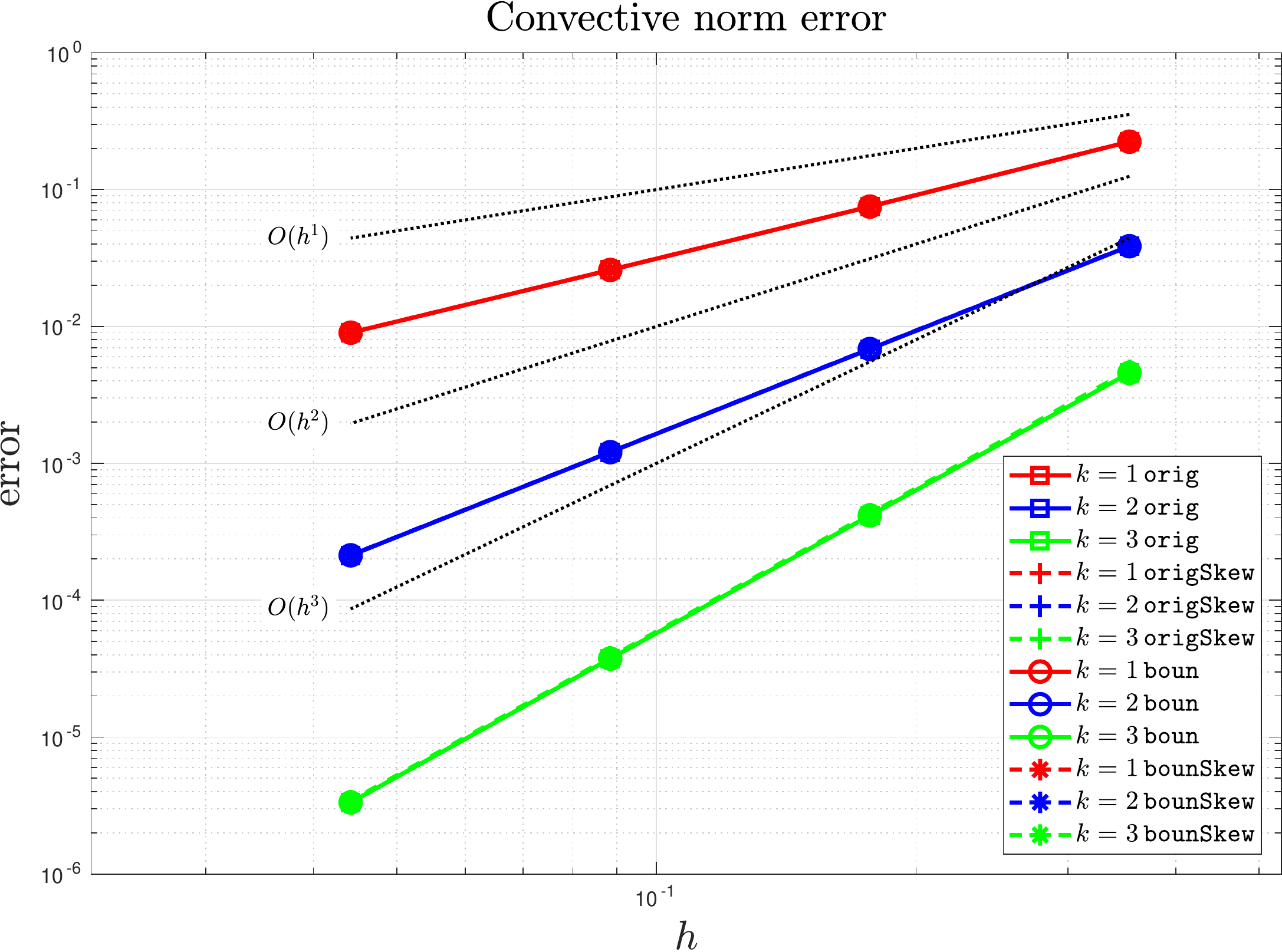} \\
\multicolumn{2}{c}{\texttt{tria}} \\
\includegraphics[width=0.47\textwidth]{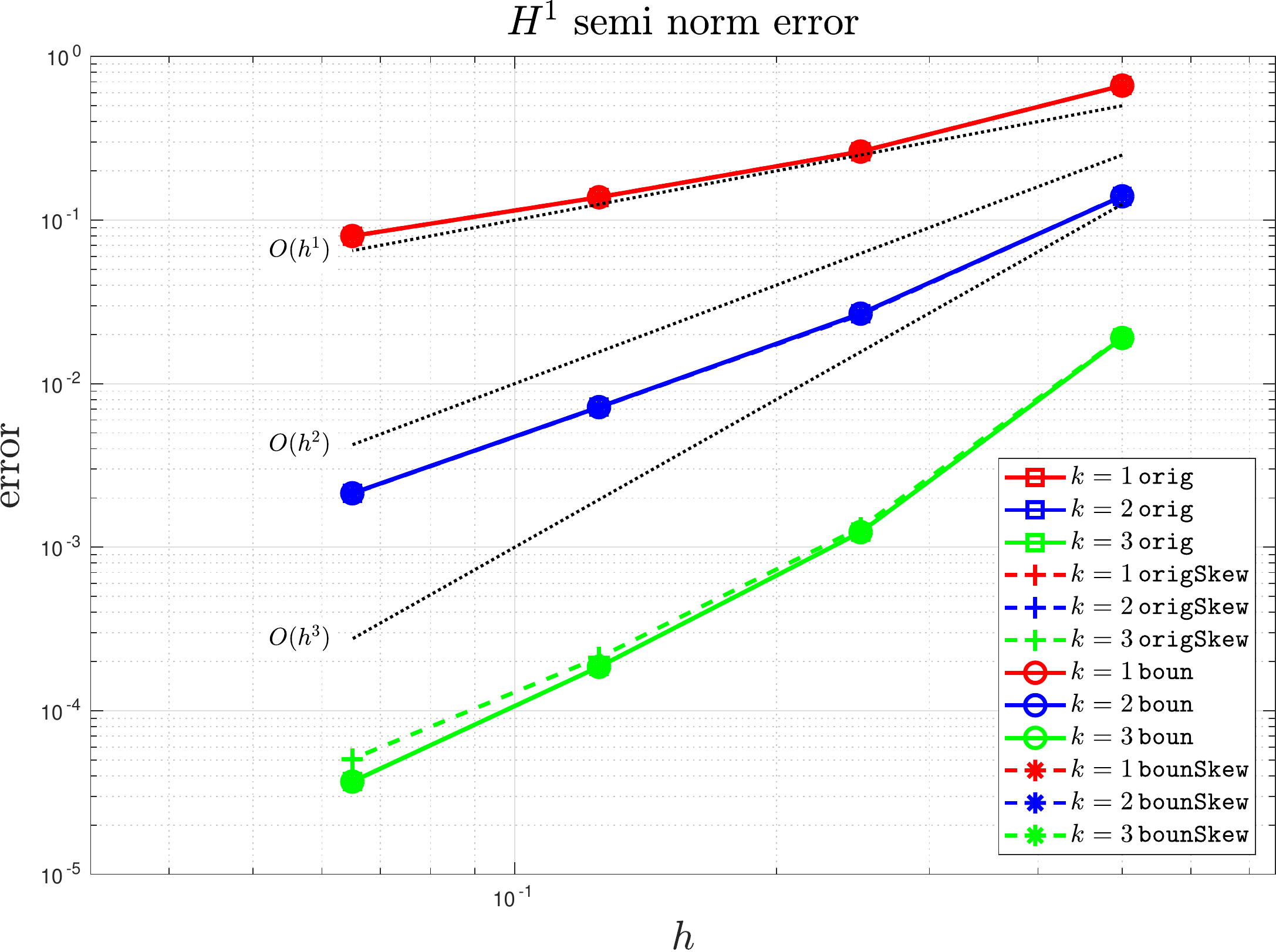} &
\includegraphics[width=0.47\textwidth]{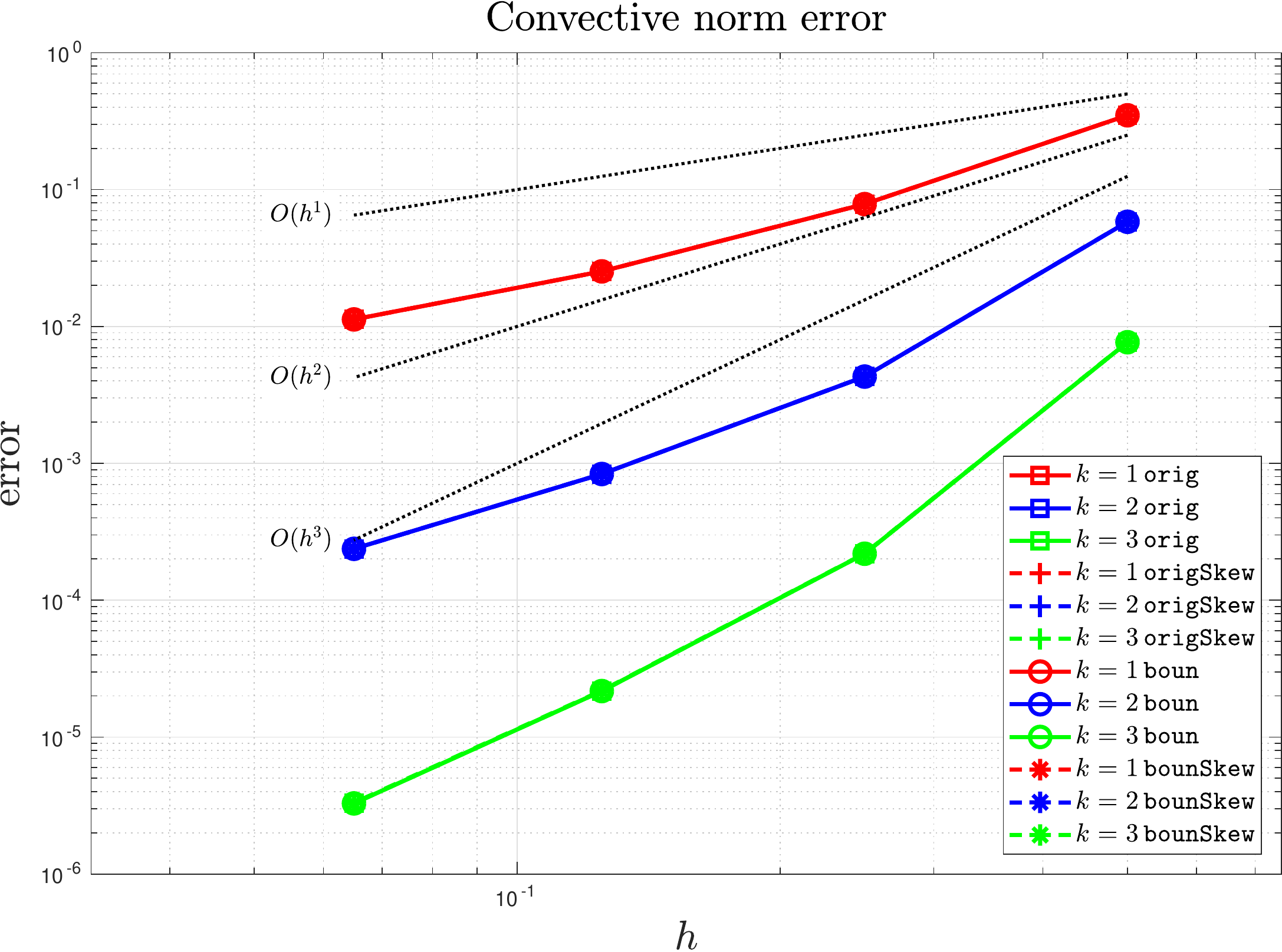} \\
\multicolumn{2}{c}{\texttt{voro}} \\
\includegraphics[width=0.47\textwidth]{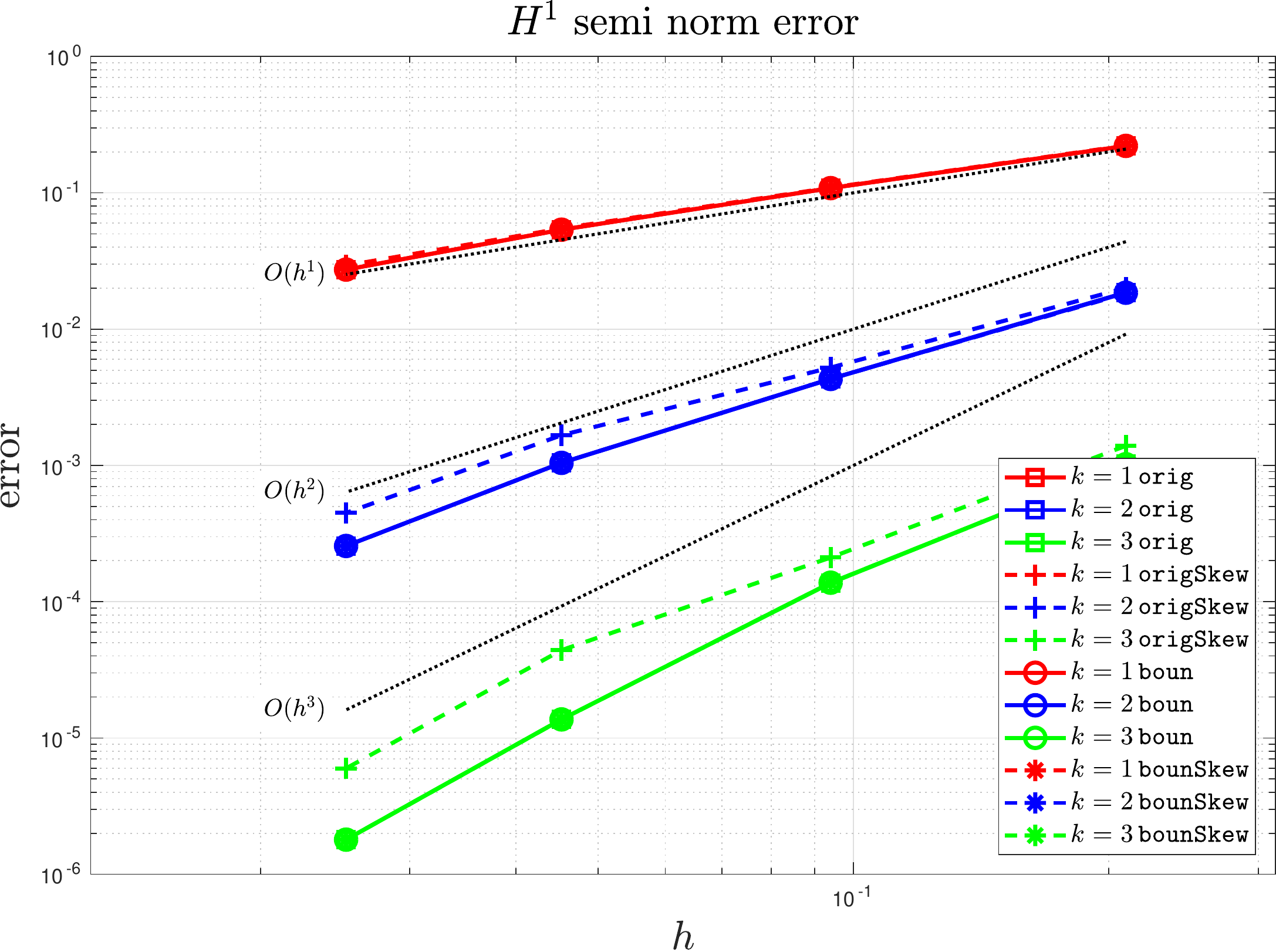} &
\includegraphics[width=0.47\textwidth]{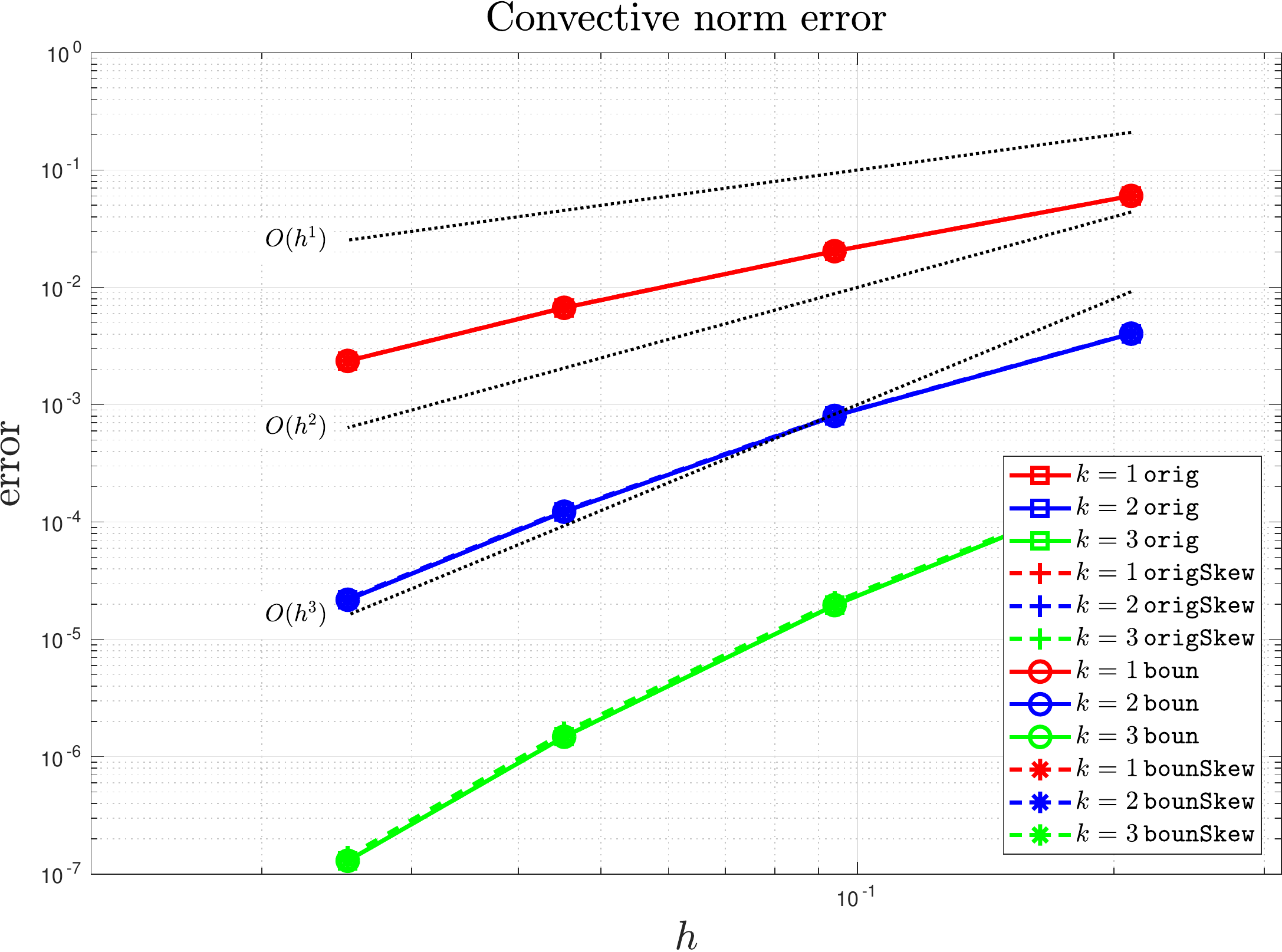} \\
\multicolumn{2}{c}{\texttt{rand}} \\
\includegraphics[width=0.47\textwidth]{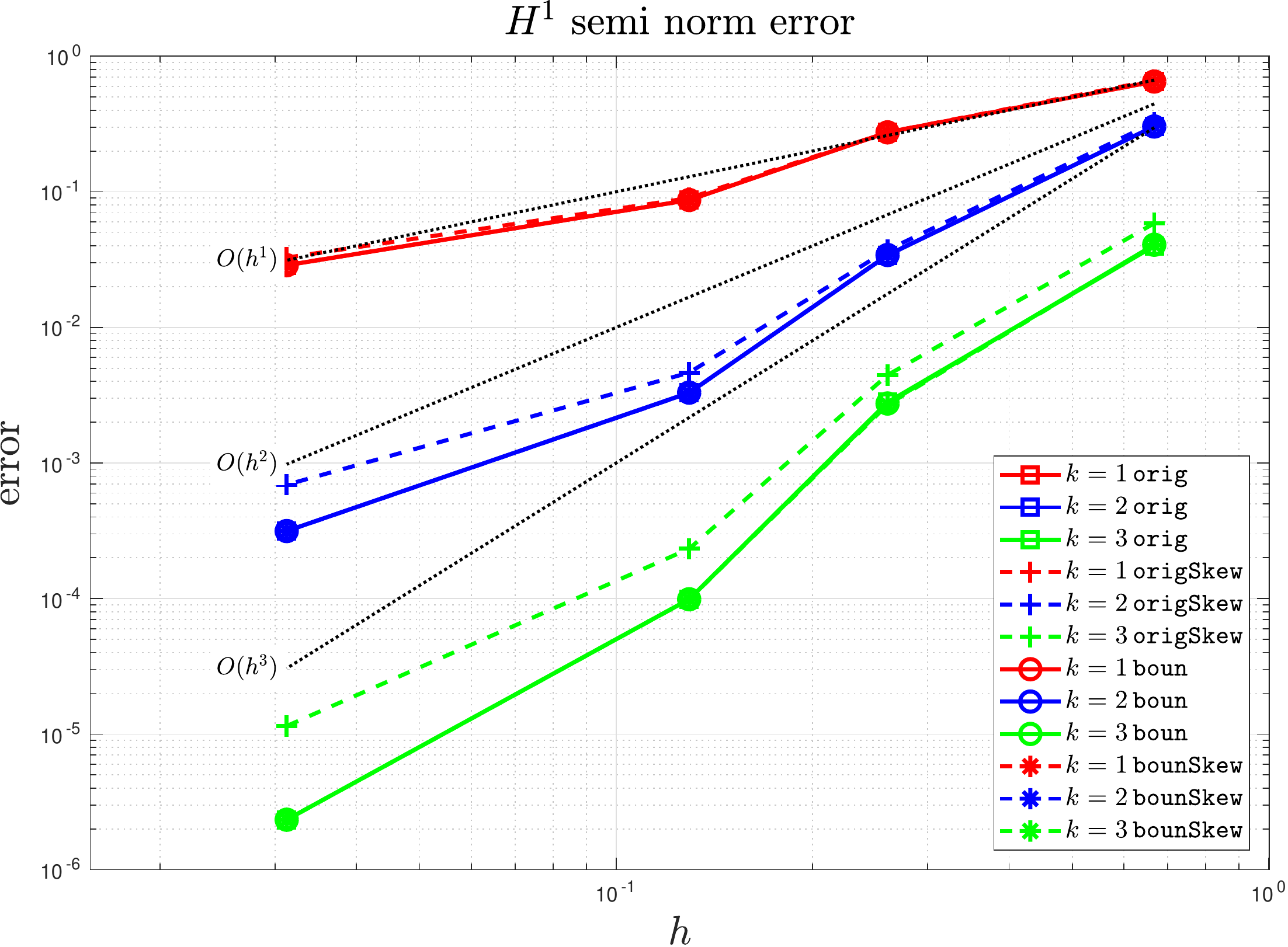} &
\includegraphics[width=0.47\textwidth]{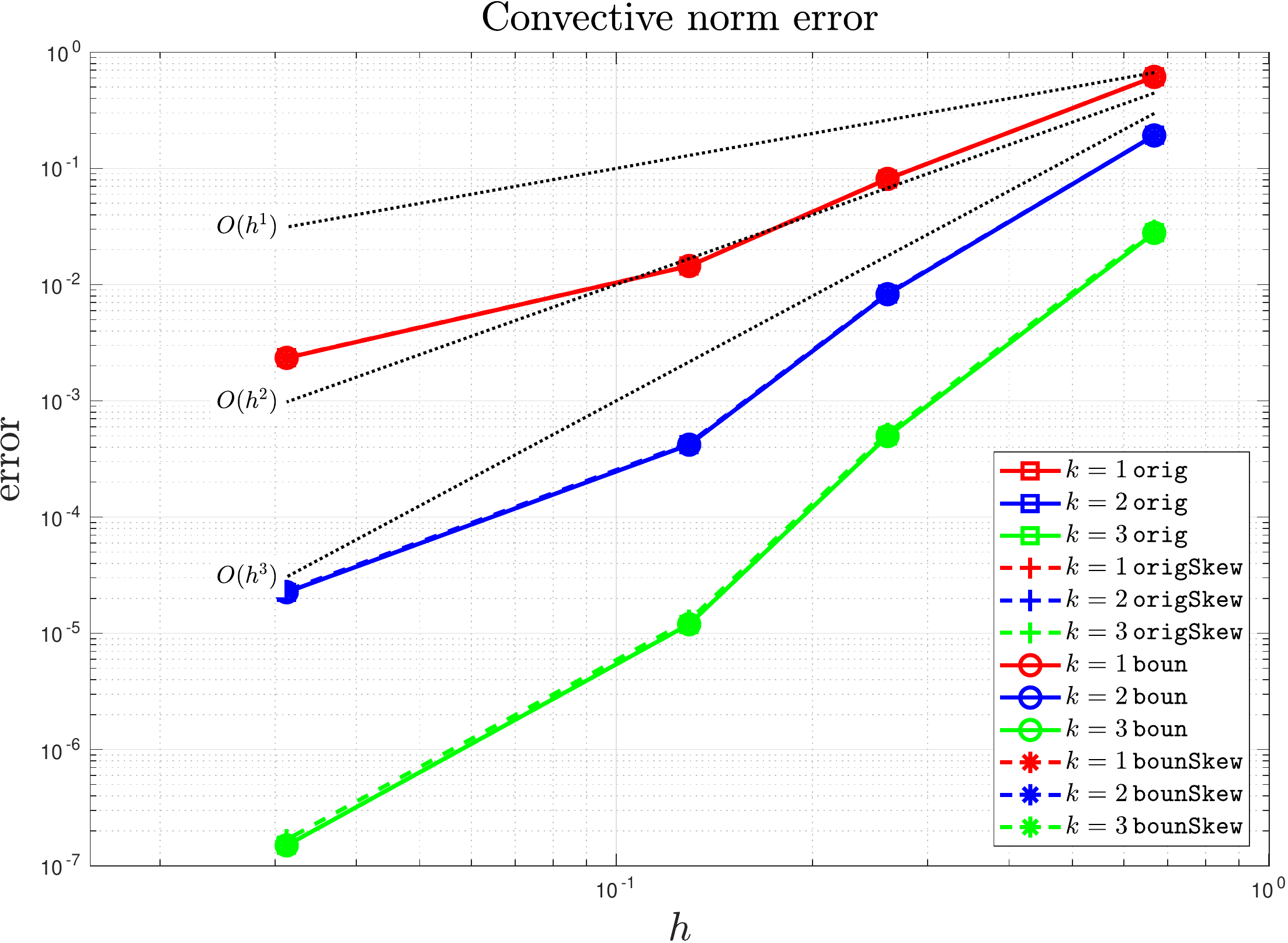} \\
\end{tabular}
\end{center}
\caption{Convergence lines for the case $\epsilon=1e-06$}
\label{fig:convKappaM6Bdiff}
\end{figure}

We conclude the section by noticing that  $b_{\partial,h}^E(\cdot,\cdot)$ 
and $b_{o,h}^E(\cdot,\cdot)$  coincide for a costant convection term $\bb$ (the proof can be easily performed by a direct computation). 
To have a numerical evidence about this fact,
we have considered a problem with constant vector field $\bb$ and compared the stiffness matrices provided by $b_{o,h}^E(\cdot,\cdot)$ and $b_{\partial,h}^E(\cdot,\cdot)$, respectively. For every mesh and every approximation degree, we have found that they always differ up to machine precision. Here we show the data (norms of the difference between the stifness matrices) only for the finest \texttt{voro} mesh and for $k=1,2$, see Table~\ref{tab:diffMat}.

\begin{table}[!htb]
\begin{center}
\begin{tabular}{|c|c|c|}
\hline
\multicolumn{1}{|c|}{$k$} &\multicolumn{2}{c|}{\texttt{voro}} \\
\hline
1 &1.1221e-15 &1.7536e-17 \\ 
2 &9.2636e-16 &2.3161e-17 \\ 
\hline
\end{tabular}
\caption{Diffference between the stiffness matrices considering $b_{o,h}^E(\cdot,\cdot)$ or $b_{\partial,h}^E(\cdot,\cdot)$.}
\label{tab:diffMat}
\end{center}
\end{table}

\section*{Acknowledgements}

The authors L. BdV, F. D. and G. V. were partially supported by the European Research Council through
the H2020 Consolidator Grant (grant no. 681162) CAVE, ``Challenges and Advancements in Virtual Elements''. This support is gratefully acknowledged. The authors L. BdV and C. L. were partially supported by the italian PRIN 2017 grant
``Virtual Element Methods: Analysis and Applications''. This support is gratefully acknowledged.

\addcontentsline{toc}{section}{\refname}
\bibliographystyle{plain}
\bibliography{biblio}

\end{document}